\numberwithin{equation}{section}
\newcommand{\A}{\mathcal{A}}
\newcommand{\Au}{\mathrm{Aut}\,}
\newcommand{\ad}{\mathrm{ad}}
\newcommand{\af}{\mathrm{aff}}
\newcommand{\C}{\mathbb{C}}
\newcommand{\di}{\mathrm{diag}}
\newcommand{\E}{\mathrm{End}\,}
\newcommand{\F}{\mathbb{F}}
\newcommand{\G}{\mathcal{G}}
\newcommand{\GL}{\mathrm{GL}}
\newcommand{\g}{\mathfrak{g}}
\newcommand{\h}{\mathfrak{h}}
\newcommand{\I}{\mathrm{Im} \,}
\newcommand{\Ker}{\mathrm{Ker}}
\newcommand{\K}{\mathbb{K}}
\newcommand{\Li}{$\mathrm{Lie} \left(n, 2\right)$}
\newcommand{\li}{$\mathrm{Lie} \left(n, n-2\right)$}
\newcommand{\M}{\mathrm{Mat}}
\newcommand{\R}{\mathbb{R}}
\newcommand{\ra}{\mathrm{rank}}
\newcommand{\s}{\mathrm{Span}}
\newcommand{\T}{\mathrm{Tr} \,}
\newcommand{\Z}{\mathcal{Z}}
\newtheorem{thm}{Theorem}[section]
\newtheorem{lem}[thm]{Lemma}
\newtheorem{prop}[thm]{Proposition}
\newtheorem{cor}[thm]{Corollary}
\newtheorem{rem}[thm]{Remark}
\theoremstyle{definition}
\newtheorem{defn}[thm]{Definition}
\newtheorem*{aknow}{Acknowledgments}
\begin{document}

\title[Solvable Lie algebras having 2-dimensional or 2-codimensional derived ideal]{On the classifying problem for the class of real solvable Lie algebras having 2-dimensional or 2-codimensional derived ideal
}


\author{Vu A. Le \and Tuan A. Nguyen \and  Tu T. C. Nguyen \and Tuyen T. M. Nguyen \and Hoa Q. Duong}


\address{Vu A. Le, Department of Economic Mathematics, University of Economics and Law, Vietnam National University - Ho Chi Minh City, Vietnam.}
\email{vula@uel.edu.vn}

\address{Tuan A. Nguyen, Faculty of Political Science and Pedagogy, University of Physical Education and Sports, Ho Chi Minh City, Vietnam.}
\email{natuan@upes.edu.vn}

\address{Tu T. C. Nguyen, College of Natural Sciences, Can Tho University, Can Tho City, Vietnam.}
\email{camtu@ctu.edu.vn}

\address{Tuyen T. M. Nguyen, Faculty of Mathematics and Information, Dong Thap University, Cao Lanh city, Dong Thap Province, Vietnam.}
\email{ntmtuyen@dthu.edu.vn}

\address{Hoa Q. Duong, General Education Program, Hoa Sen University, Ho Chi Minh City, Vietnam.}
\email{hoa.duongquang@hoasen.edu.vn}


\keywords{Lie algebras; Derived ideals; {\Li}-algebras; {\li}-algebras}

\subjclass[2000]{Primary 17B, 22E60, Secondary 20G05.}

\maketitle

	\begin{abstract}
	   Let $\mathrm{Lie} \left(n, k\right)$ denote the class of all $n$-dimensional real solvable Lie algebras having $k$-dimensional 
	   derived ideal ($1 \leqslant k \leqslant n-1$). In 1993, the class $\mathrm{Lie} \left(n, 1\right)$ was completely classified by 
	   Sch\"obel \cite{Sch93}. In 2016, Vu A. Le et al. \cite{VHTHT16} considered the class $\mathrm{Lie} \left(n, n-1\right)$ and 
	   classified its subclass containing all the algebras having 1-codimensional commutative derived ideal. One subclass in {\Li} 
	   was firstly considered and incompletely classified by Sch\"obel \cite{Sch93} in 1993. Later, Janisse also gave an incomplete 
	   classification of {\Li} and published as a scientific report \cite{Jan10} in 2010. In this paper, we set up a new approach to study the 
	   classifying problem of classes {\Li} as well as {\li} and present the new complete classification of {\Li} in the combination with 
	   the well-known Eberlein's result of 2-step nilpotent Lie algebras from \cite[p.\,37--72]{Ebe03}. The paper will also classify a subclass 
	   of {\li} and will point out missings in Sch\"obel \cite{Sch93}, Janisse \cite{Jan10}, Mubarakzyanov \cite{Mub63a} as well as revise an 
	   error of Morozov \cite{Mor58}.
	\end{abstract}

	\section{Introduction}\label{sec:1}

	   From historical point of view, Lie Theory was found by Marius Sophus Lie (1842--1899) in the last decades of the 19th century. 
	   Nowadays, one cannot deny that Lie Theory -- regarding Lie groups as well as Lie algebras -- is an important branch of mathematics 
	   which becomes more and more interesting because its applicable range has been expanded continuously not only in the inside of 
	   Mathematics but also in Modern Physics, Cosmology, Economics, Financial Mathematics, etc.
	   
	   As many areas of mathematics, one of the fundamental problems in Lie Theory is to classify all Lie algebras, up to an isomorphism. 
	   In particular, due to Levi and Maltsev's Theorems, the problem of classifying Lie algebras over a field of characteristic zero is reduced 
	   to the problem of classifying semi-simple and solvable ones, in which the semi-simple Lie algebras were completely solved by Cartan 
	   \cite{Car94} in 1894 (over complex field) and Gantmacher \cite{Gan39} in 1939 (over real field).
	   
	   Naturally, we have to classify solvable Lie algebras but it is much harder. Although several classifications in low dimension are known, 
	   the problem of the complete classification of the solvable Lie algebras (even if over the complex field) is still open. There are at least two 
	   ways of proceeding in the classification of solvable Lie algebras: \emph{by dimension} or \emph{by structure}.
	   
	   It seems to be impossible if we try to proceed with the classification by dimension, i.e. to classify Lie algebras with a fixed dimension, 
	   when the dimension is greater than 6 because the number of  parameters increases drastically and the volume of calculations, therefore, 
	   will become enormous. However, it is more effective to proceed by structure, i.e. to classify solvable Lie algebras with some specific 
	   given properties. In this paper, we follow the second way.
	   
	   Let us note that if $\G$ is an $n$-dimensional non-commutative solvable Lie algebra then its \emph{derived algebra}\footnote{Also 
	   known as the \emph{(first) derived ideal}, see Section \ref{sec:2} below.} $\G^1: = [\G, \G]$ has dimension $k \in \{1, 2, \ldots, n-1\}$. 
	   For convenience, we denote by $\mathrm{Lie} \left(n, k\right)$ the class of all $n$-dimensional real solvable Lie algebras having 
	   $k$-dimensional derived algebra ($k \in \{1, 2, \ldots, n-1\}$). In order to solve the difficult problem of classifying all solvable Lie algebras, it is natural to restrict this problem to the subclasses $\mathrm{Lie} \left(n, k\right)$, one by one, $1 \leqslant k \leqslant n-1$. First of all, we consider the classes $\mathrm{Lie} \left(n, k\right)$ or $\mathrm{Lie} \left(n, n-k\right)$ when $k$ is small. Sometimes, due to the complexity of the problem, we consider the subclass $\mathrm{Lie} \left(n, kC\right)$ of $\mathrm{Lie} \left(n, k\right)$ contains Lie algebras having ($k$-dimensional) commutative derived algebra.
	   
	   In an attempt to deal with the problem as above, some researchers studied the classes $\mathrm{Lie} \left(n, k\right)$ with 
	   $k \in \{1, 2, 3\}$ in recent decades. Namely, the complete classification of $\mathrm{Lie} \left(n, 1\right)$ was given by 
	   Sch\"obel \cite{Sch93} in 1993 which consists of the real affine Lie algebra or the real Heisenberg Lie algebras and their trivial 
	   extensions by commutative Lie algebras. Nevertheless, it is almost at the present unsolved if $k = 3$, while there are some results 
	   when $k=2$ as follows:
	   \begin{itemize}
	       \item Sch\"obel \cite{Sch93} in 1993 gave an incomplete classification of {\Li} based on the fact that if $\G$ belongs to 
	       {\Li} then it has a 4-dimensional subalgebra $S$ whose $[S, S]$ is 2-dimensional too. In Section \ref{sec:5}, we will point out 
	       that there is a missing in his classification.
	      \item Eberlein \cite{Ebe03} in 2003 gave a formal classification of 2-step nilpotent Lie algebras. That classification contains a 
	      special case of {\Li} when the considered Lie algebra has derived algebra which lies in its center.
	      \item Janisse \cite{Jan10} in 2010 considered the so-called \emph{structure matrix} whose elements are structure constants 
	      $a_{ij}^k$ with three indices: lexicographical order for pairs $(i, j)$ and normal one for $k$. In spite of interesting approach, 
	      we will show in Section \ref{sec:5} that his classification of {\Li} is also incomplete.
	   \end{itemize}
	   
	   Up to now, there is no more work which gives a complete classification of {\Li}. This motivates us to give in this paper a new 
	   approach to solve this problem. Roughly speaking, we will set up in this paper a new approach to study and classify the class {\Li}. 
	   Namely, we give a new complete classification of all non 2-step nilpotent algebras of {\Li}. Therefore, by combining a well-known 
	   Eberlein's classification of 2-step nilpotent Lie algebras in \cite[p.\,37--72]{Ebe03}, we obtain the new complete classification of {\Li}.
	   
	   More concretely, we use the well-known formula of the maximal dimension of commutative subalgebras contained in the Lie algebra 
	   $\M_n (\K)$ of $n$-square matrices with $\K$-valued entries. Schur \cite{Sch05} in 1905 is the first author who set up this formula 
	   over an algebraically closed field, and his result later was extended to an arbitrary field by Jacobson \cite{Jac44} in 1944. In fact, 
	   Proposition \ref{prop3} in Section \ref{sec:3} shows that if $\G$ belongs to {\Li} then its derived algebra must be commutative. 
	   As a consequence of this assertion, a suitable subalgebra of $\mathrm{Der} (\G) \equiv \M_n (\R)$ is commutative too. Fortunately, 
	   a mechanical combination of Schur and Jacobson's results with basic techniques of Linear Algebra as well as Lie Theory can 
	   give a complete classification of {\Li} as desired.
	   
	   Following the complete classification of $\mathrm{Lie} \left(n,(n-1)C\right)$ in \cite{VHTHT16}, we start with studying the class 
	   $\mathrm{Lie} \left(n,(n-2)C\right)$ ($n \geqslant 4$) in this paper by the similar method which is used to study the class {\Li}. 
	   Theorem \ref{thm2} in Section \ref{sec:3} gives an incomplete classification of $\mathrm{Lie} \left(n,(n-2)C\right)$. 
	   
	   The paper is organized into six sections, including this introduction. Some useful results and terminologies are listed in next section. 
	   Afterwards, Section \ref{sec:3} states the main results, and Section \ref{sec:4} is devoted to present the detailed proofs. 
	   Some comments based on comparison with previous works as well as some illustrations of the results in low dimensions are 
	   included in Section \ref{sec:5}. Finally, we present some concluding remarks in Section \ref{sec:6}.

	\section{Preliminaries}\label{sec:2}

	   In this section, we will recall some notions and well-known results which will be used later. First of all, we emphasize that, 
	   throughout this paper the notation $\M_n(\K)$ means the set of $n$-square matrices with entries in some field $\K$ and $\GL_n(\K)$ 
	   denotes the group of all invertible matrices in $\M_n(\K)$, where $n$ is a positive integer number. 

	   \begin{defn}
	   		Let $A, B$ be two $n$-square matrices in $\M_n(\K)$. We say that $A, B$ are \emph{proportional similar}, denoted by $A \sim_p B$, 
	   		if there exist $c \in \K \setminus \{0\}$ and $C \in \GL_n(\K)$ such that $cA = C^{-1}BC$.
	   \end{defn}

	   \begin{rem}
	      In fact, when $\K$ is the field of real numbers or complex numbers, the classification of $\M_n(\K)$ in proportional similar relation 
	      is easily reduced the well-known classification of $\M_n(\R)$ or $\M_n(\C)$ by using the standard Jordan form of square matrices.   
	   \end{rem}
	   
	   \begin{defn}
	      An \emph{$n$-dimensional Lie algebra over a field $\K$} is an $n$-dimensional vector space $\G$ over $\K$ together with 
	      a skew-symmetric bilinear map $[\cdot, \cdot]: \G \times \G \to \G$ which is called a \emph{Lie bracket} obeys Jacobi identity 
	      $[[X, Y], Z] + [[Z, X], Y] + [[Y, Z], X] = 0$ for all $X, Y, Z \in \G$. If $[\cdot, \cdot] \equiv 0$ then $\G$ is called \emph{commutative} or \emph{abelian}.
	   \end{defn}

	   \begin{defn}
	      A \emph{Lie isomorphism} $f: \G \to \mathcal{H}$ between two Lie algebras is a linear isomorphism which preserves Lie brackets, 
	      i.e. $f \left(\left[X, Y\right]_\G \right) = \left[f(X), f(Y)\right]_\mathcal{H}$ for all $X, Y \in \G$. If there exists a Lie isomorphism 
	      $f: \G \to \mathcal{H}$ then we say that $\G$ and $\mathcal{H}$ are \emph{isomorphic}.
	   \end{defn}
	   
	   \begin{rem}
	     For any Lie algebra $\G$ with a chosen basis $(X_1, \ldots, X_n)$, the Lie structure is absolutely defined by $[X_i, X_j],\,1 \leqslant i < j \leqslant n$. Sometimes, these Lie brackets are complex. Then, we will choose a suitable new basis such that the Lie brackets become simpler. 
	   \end{rem}
	   
	   \begin{defn}
	      A vector subspace $\mathcal{H}$ of a Lie algebra $\G$ is called a \emph{Lie subalgebra} of $\G$ if it is closed under the Lie bracket, 
	      i.e. $[X, Y] \in \mathcal{H}$ for all $X,Y \in \mathcal{H}$. Furthermore, a Lie subalgebra $\mathcal{H}$ is called an \emph{ideal} of 
	      $\G$ if $[X, Y] \in \mathcal{H}$ for all $X \in \G$ and $Y \in \mathcal{H}$. A Lie algebra $\G$ is said to be \emph{decomposable} if it is the 
	      direct sum of two non-trivial subalgebras, and \emph{indecomposable} otherwise.
	   \end{defn}
	   
	   \begin{defn}
	      Let $\G$ be a Lie algebra. We recall its three characteristic series as follows.
	      \begin{itemize}
	         \item The \emph{derived series $DS$} is
	         \[
	            \G^0 := \G \supset \G^1 := \left[ \G, \G \right] \supset \G^2 := \left[ \G^1, \G^1 \right] 
	            \supset \cdots \supset \G^k := \left[ \G^{k-1}, \G^{k-1} \right] \supset \cdots.
	         \]
	         We say that $\G$ is \emph{solvable} if $DS$ terminates, i.e. $\G^k = 0$ for some positive integer number $k$.
	         \item The \emph{lower central series $LS$} is	         
	         \[
	            \G_0 := \G \supset \G_1 := \left[ \G, \G \right] \supset \G_2 := \left[ \G, \G_1 \right] 
	            \supset \cdots \supset \G_k := \left[ \G, \G_{k-1} \right] \supset \cdots.
	         \]
	         Similarly, $\G$ is \emph{nilpotent} if $LS$ terminates, i.e. $\G_k = 0$ for some positive integer number $k$. Furthermore, 
	         if $\G_{k-1} \neq 0 = \G_k$ then we say that $\G$ is \emph{$k$-step nilpotent}.
	         \item The \emph{upper central series $US$} is
	         \[
	            0 := C_0 (\G) \subset C_1 (\G) = \Z(\G) \subset \cdots \subset C_k (\G) \subset \cdots,
	         \]
	         where $\Z(\G)$ is the center of $\G$, and $C_{k+1} (\G) := \{X \in \G: [X, \G] \subset C_k (\G)\}$. The sequence 
	         $\left(\dim C_1 (\G), \dim C_2(\G), \ldots, \dim C_k (\G), \ldots\right)$ is called the \emph{upper central series dimensions} of $\G$.
	      \end{itemize}
	   \end{defn}

	   \begin{rem}
	      It is obvious that $[\G, \G]$ is an ideal of $\G$ which is called its \emph{first derived ideal} or \emph{derived algebra} 
	      as we have emphasized in Section \ref{sec:1}.
	   \end{rem}

	   \begin{prop}[Schur-Jacobson Theorem \cite{Jac44,Sch05}]\label{Schur-JacobsonThm}
	      If $A$ is a commutative subalgebra of the Lie algebra $\M_n (\K)$ then $\dim A \leqslant \left[ \frac{n^2}{4} \right]+1$, 
	      where $[x]$ is the integer part of $x \in \R$.
	   \end{prop}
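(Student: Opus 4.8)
The plan is to deduce the statement from the classical theory of commutative associative matrix algebras. First I would pass from the Lie-theoretic hypothesis to an associative one: the assumption that $A$ is a commutative subalgebra of the Lie algebra $\M_n(\K)$ means precisely that its elements commute pairwise as matrices, so the unital associative subalgebra $\mathcal{C} \subseteq \M_n(\K)$ generated by $A$ together with the identity $I$ is commutative. Since $A \subseteq \mathcal{C}$, it suffices to bound $\dim_\K \mathcal{C}$, and one may assume $I \in \mathcal{C}$. The target constant is best understood through the extremal example: fixing a decomposition $\K^n = U \oplus U'$ with $\dim U = \left[ \frac{n}{2} \right]$, the space $\K I \oplus \mathrm{Hom}(U', U)$ (the identity together with the maps that send $U'$ into $U$ and kill $U$) is commutative, because any two such off-diagonal maps compose to zero, and it has dimension $1 + \left[ \frac{n}{2} \right]\left( n - \left[ \frac{n}{2} \right]\right) = \left[ \frac{n^2}{4} \right]+1$. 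This both shows the bound is sharp and indicates that the hard information is an upper estimate forcing the off-diagonal block to be no larger than this.

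For the upper bound I would induct on $n$, viewing $V = \K^n$ as a module over $\mathcal{C}$. If $V$ is a simple $\mathcal{C}$-module, then Schur's Lemma shows $D := \mathrm{End}_{\mathcal{C}}(V)$ is a division algebra; since $\mathcal{C}$ is commutative it centralizes itself, hence $\mathcal{C} \subseteq D$, so $\mathcal{C}$ is a commutative subdivision algebra, i.e. a field, acting irreducibly on $V$. A field extension of $\K$ embedded in $\M_n(\K)$ makes $V$ a vector space over it, so its $\K$-degree divides $n$; thus $\dim_\K \mathcal{C} \leqslant n \leqslant \left[ \frac{n^2}{4} \right]+1$. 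Otherwise $V$ has a proper nonzero invariant subspace $W$ with $\dim W = d$, and in a compatible basis every element of $\mathcal{C}$ is block upper triangular, with a block $P \in \M_d(\K)$ acting on $W$, a block $R \in \M_{n-d}(\K)$ acting on $V/W$, and an off-diagonal block $Q \in \mathrm{Hom}(V/W, W)$. The diagonal restrictions give commutative subalgebras of $\M_d(\K)$ and $\M_{n-d}(\K)$, to which the inductive hypothesis applies, while the elements with $P = R = 0$ inject into $\mathrm{Hom}(V/W, W)$.

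The main obstacle is that these three naive bounds — the inductive estimates on the two diagonal blocks and the bound $\dim \mathrm{Hom}(V/W, W) = d(n-d)$ on the off-diagonal block — cannot simply be added, since their sum overshoots $\left[ \frac{n^2}{4} \right]+1$. The real work lies exactly in the coupling imposed by commutativity: for two block-triangular elements the cross terms must satisfy $PQ' + QR' = P'Q + Q'R$, so the admissible off-diagonal blocks form not all of $\mathrm{Hom}(V/W, W)$ but a subspace that is simultaneously a module over the images of $\mathcal{C}$ on $W$ and on $V/W$. Controlling the dimension of this bimodule against the sizes of the diagonal algebras — so that a large off-diagonal block forces small diagonal blocks and conversely — is the delicate combinatorial heart of the theorem; carrying it through (equivalently, reducing over $\overline{\K}$ to the local case $\mathcal{C} = \K I \oplus \mathcal{N}$ with $\mathcal{N}$ a commutative nilpotent algebra and bounding $\dim \mathcal{N} \leqslant \left[ \frac{n^2}{4} \right]$) is precisely the content supplied by Schur over algebraically closed fields and extended by Jacobson to an arbitrary $\K$, which I would invoke to close the induction.
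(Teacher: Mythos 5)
The paper offers no proof of this proposition at all---it is imported verbatim from the literature, with the citations \cite{Sch05,Jac44} standing in for the argument---so the only question is whether your proposal stands on its own as a proof. It does not, and you effectively concede this yourself: at the decisive step you write that bounding the nilpotent part is ``precisely the content supplied by Schur \ldots\ and extended by Jacobson \ldots, which I would invoke to close the induction.'' Invoking the Schur--Jacobson bound in order to prove the Schur--Jacobson bound is circular. What remains, stripped of that invocation, is a correct frame with the theorem-shaped hole still in it: you set up the induction, identify the block-triangular structure, and correctly isolate where the difficulty lives, but you never produce the estimate that constitutes the result.

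To give credit where it is due: the pieces you do carry out are sound. The extremal algebra $\K I \oplus \mathrm{Hom}(U', U)$ with $\dim U = \left[ \frac{n}{2} \right]$ is indeed commutative of dimension $\left[ \frac{n^2}{4} \right]+1$, settling sharpness; in the irreducible case your Schur's-Lemma argument correctly makes $\mathcal{C}$ a field whose $\K$-degree divides $n$, so $\dim_\K \mathcal{C} \leqslant n \leqslant \left[ \frac{n^2}{4} \right]+1$; the commutation constraint $PQ' + QR' = P'Q + Q'R$ on block-triangular elements is right; and your parenthetical reduction over $\overline{\K}$ is a genuinely good observation, since extension of scalars preserves both commutativity and dimension, so the arbitrary-field statement does follow formally from the algebraically closed case (meaning Jacobson's generality, at least, could be dispensed with cheaply). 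But the algebraically closed, local case---$\mathcal{C} = \K I \oplus \mathcal{N}$ with $\mathcal{N}$ commutative nilpotent and $\dim \mathcal{N} \leqslant \left[ \frac{n^2}{4} \right]$---\emph{is} the theorem, and you supply no mechanism for it: no control of how a large off-diagonal bimodule forces small diagonal algebras, which is exactly the additivity failure you yourself point out. To close the induction honestly you would need an actual argument there, for instance the original inductions of Schur or Jacobson, or the short inductive proof of Mirzakhani (Amer.\ Math.\ Monthly {\bf 105} (1998), 260--262), rather than a citation to the statement being proved.
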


	   \begin{prop}[{{Lie's Theorem \cite[Theorem 1.25]{Kna02}}}]
	      Let $\K$ and $\Bbbk$ be subfields of the field of complex numbers $\C$ with $\Bbbk \subset \K$. Suppose that $\G$ is a solvable 
	      Lie algebra over $\Bbbk$ and $\rho: \G \to \E_\K V$ is a representation of $\G$ in a finite-dimensional vector space $V \neq 0$ over $\K$. 
	      If $\K$ is algebraically closed, then there is a simultaneous eigenvector $v \in V$ for all members of $\rho (\G)$. More generally, if all the 
	      eigenvalues of $\rho(X)$ lie in $\K$ for all $X \in \G$ then there is a simultaneous eigenvector too.
	   \end{prop}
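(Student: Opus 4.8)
The plan is to argue by induction on $\dim \G$ (as a $\Bbbk$-vector space). When $\dim \G \leqslant 1$ the claim is immediate: writing $\G = \Bbbk X$, the operator $\rho(X) \in \E_\K V$ possesses an eigenvalue in $\K$ --- because $\K$ is algebraically closed, or because the eigenvalues of $\rho(X)$ are assumed to lie in $\K$ --- and the corresponding eigenvector serves for all of $\rho(\G)$. For the inductive step, since $\G$ is solvable and non-zero we have $[\G,\G] \subsetneq \G$; choosing any hyperplane $\h$ with $[\G,\G] \subseteq \h \subset \G$ produces a codimension-one subspace which is automatically an ideal (as $[\G,\h] \subseteq [\G,\G] \subseteq \h$), hence a solvable Lie algebra of dimension $\dim\G - 1$ still satisfying the hypotheses. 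By the induction hypothesis applied to $\rho|_\h$, there is a non-zero $v_0 \in V$ and a linear functional $\lambda \colon \h \to \K$ with $\rho(H)v_0 = \lambda(H)v_0$ for every $H \in \h$. I then pass to the full weight space
\[
  W := \{ w \in V : \rho(H)w = \lambda(H)w \text{ for all } H \in \h\},
\]
which is non-zero since $v_0 \in W$.

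The crux of the argument --- and the step I expect to be the main obstacle --- is to show that $W$ is invariant under $\rho(\G)$. Fixing $X \in \G$ and $0 \neq w \in W$, I would let $m \geqslant 1$ be the largest index for which $w, \rho(X)w, \ldots, \rho(X)^{m-1}w$ are linearly independent, so that $U := \s\{w, \rho(X)w, \ldots, \rho(X)^{m-1}w\}$ has $\dim U = m$ and is $\rho(X)$-stable. Using the homomorphism relation $\rho(H)\rho(X) = \rho(X)\rho(H) + \rho([H,X])$ together with $[H,X]\in\h$, a simultaneous induction over the basis vectors $\rho(X)^j w$ shows that each $\rho(H)$ ($H \in \h$) leaves $U$ invariant and acts on it by an upper-triangular matrix with constant diagonal entry $\lambda(H)$; hence $\T\big(\rho(H)|_U\big) = m\,\lambda(H)$.

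Applying the last identity to $H' := [H,X] \in \h$ while simultaneously observing that $\rho([H,X])|_U = [\rho(H)|_U, \rho(X)|_U]$ is a commutator of operators on $U$ and therefore has vanishing trace, I obtain $m\,\lambda([H,X]) = 0$. Because $\K \subseteq \C$ has characteristic zero and $m \geqslant 1$, this forces $\lambda([H,X]) = 0$ for all $H \in \h$ and $X \in \G$ --- the one place where the characteristic-zero assumption is indispensable. Feeding this back into the commutation relation yields $\rho(H)\big(\rho(X)w\big) = \lambda(H)\,\rho(X)w$ for each $H \in \h$, that is, $\rho(X)w \in W$, which establishes the invariance.

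It remains to produce the simultaneous eigenvector. Choosing $Z \in \G \setminus \h$ so that $\G = \h \oplus \Bbbk Z$, the invariance of $W$ means $\rho(Z)$ restricts to a $\K$-linear operator on the non-zero space $W$; since $\K$ is algebraically closed (or since every eigenvalue of $\rho(Z)$, hence of its restriction to the invariant subspace $W$, lies in $\K$), this restriction admits an eigenvector $0 \neq v \in W$ with $\rho(Z)v = \mu v$ for some $\mu \in \K$. Then $v$ is the desired common eigenvector: any $X \in \G$ decomposes as $X = H + cZ$ with $H \in \h$ and $c \in \Bbbk$, whence $\rho(X)v = (\lambda(H) + c\mu)v$. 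This completes the induction and covers both cases at once, as the one-dimensional base case and the final eigenvalue extraction each invoke exactly the corresponding hypothesis on $\K$.
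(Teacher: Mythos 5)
Your proof is correct, and it is essentially the classical argument behind the cited result: note that the paper itself offers no proof of this proposition, which is quoted as a known theorem from Knapp \cite[Theorem 1.25]{Kna02}, and your argument (induction through a codimension-one ideal $\h \supseteq [\G,\G]$, invariance of the weight space $W$ via the trace-of-a-commutator trick --- where the characteristic-zero assumption $\Bbbk \subseteq \K \subseteq \C$ is correctly identified as essential --- and extraction of an eigenvector of $\rho(Z)$ on $W$) is precisely the standard proof given there. You also handle the two hypotheses on $\K$ uniformly and in the right places: the base case and the final eigenvalue extraction each use either algebraic closedness or the assumption that the eigenvalues of every $\rho(X)$ lie in $\K$ (the latter passing to the invariant subspace $W$ because the characteristic polynomial of a restriction divides that of the full operator), so no gap remains.
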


	\section{Main results}\label{sec:3}
	
	   Throughout this paper, we will use the following notations:
	   \begin{enumerate}[$\bullet$]
	      \item In traditional notations, $\R$ (resp. $\C$) is the field of real (resp. complex) numbers.
	      \item Unless otherwise specified, $n$ will denote an integer number which is greater than 2.
	      \item $\s \{X_1, \ldots, X_n\}$ is the vector space spanned by the generating set $\{X_1, \ldots, X_n\}$.
	      \item The capital Gothic letter $\G$ indicates an $n$-dimensional real solvable Lie algebra, $\G^1 := [\G, \G]$ 
	      is the (first) derived ideal of $\G$ and $\mathrm{Der} \left(\G\right)$ is the Lie algebra of all derivations of $\G$.
	      \item $\mathrm{Lie} \left(n,k\right)$: the class of all real solvable Lie algebras having $k$-dimensional derived ideal.
	      \item $\mathrm{Lie} \left(n,kC\right) := \left\lbrace \G \in \mathrm{Lie} \left(n,k\right) \, \big| \, \G^1 := [\G, \G] \text{ is commutative} \right\rbrace$.
	      \item $a_X := \ad_{X}{\arrowvert}_{\G^1}$ is the restriction of the adjoint operator $\ad_X \in \mathrm{Der} (\G)$ on $\G^1$.
	      \item $\A_{\G}: = \s \{a_X : X \in \G\}$ is the Lie subalgebra of $\mathrm{Der} \left(\G^1\right)$ generated by $a_X$ for all $X \in \G$.
	      \item $\A_{\G}(\h)$ is the Lie subalgebra of $\G^1$ generated by $\bigcup_{X \in \G}\, a_X (\h)$, where $\h$ is a Lie subalgebra of $\G^1$.
	      \item $\af (\R)$: \emph{the real affine Lie algebra} ($2$-dimensional), i.e. $\af (\R) := \s \{X, Y\}$ with $[X, Y] = Y$.  
	      \item $\af(\C)$: \emph{the complex affine Lie algebra} ($4$-dimensional), i.e. $\af(\C) := \s \{X, Y, Z, T\}$ with non-trivial 
	      Lie brackets as follows $[Z,X] = -Y, [Z,Y] = X, [T,X]= X, [T,Y] = Y$.
	      \item $\h_{2m+1}$: \emph{the $(2m+1)$-dimensional real Heisenberg Lie algebra} ($m \geqslant 1$), i.e. 
	      \[
	         \h_{2m+1} := \s \left\lbrace X_i, Y_i, Z \; \big| \; i = 1, 2, \ldots, m \right\rbrace
	      \]
	      with non-trivial Lie brackets as follows $[X_i,Y_i] = Z, i = 1,2,\ldots,m$. 
	   \end{enumerate}

	   \subsection{\bf The new complete classification of non 2-step nilpotent Lie algebras of \Li}
	   
	   First of all, we give a sufficient and necessary condition to define a 2-step nilpotent Lie structure on $\G$ and give an upper bound 
	   of the dimension of $\A_{\G}$.

	   \begin{prop}\label{prop3}
	      Let $\G$ be an $n$-dimensional real solvable Lie algebra such that its derived ideal $\G^1$ is 2-dimensional. 
	      Then we have the following assertions
	      \begin{enumerate}
	         \item $\G^1$ must be commutative.
	         \item The Lie algebra $\A_{\G}$ is also commutative and $\dim \A_{\G} \leqslant 2$.
	         \item\label{prop3-3} $\G$ is 2-step nilpotent if and only if $\A_\G = 0$.
	      \end{enumerate}
	   \end{prop}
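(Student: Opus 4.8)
The plan is to prove the three assertions in order, since (1) is the structural input that makes (2) and (3) essentially formal.

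For (1), I would study the restricted adjoint action $\rho\colon \G \to \mathrm{Der}(\G^1)$, $X \mapsto a_X = \ad_X|_{\G^1}$. Because $\G^1$ is an ideal, it is invariant under every $\ad_X$, so $\rho$ is a genuine Lie algebra homomorphism, i.e. $a_{[X,Y]} = [a_X, a_Y]$; in particular $\A_\G = \rho(\G)$ is a solvable subalgebra of $\mathrm{Der}(\G^1)$. Next I would complexify $\G^1$ and apply Lie's Theorem to the solvable Lie algebra $\G$ acting on the $2$-dimensional complex space $V = \G^1 \otimes_\R \C$: there is a basis of $V$ in which every $a_X$ is upper triangular. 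Consequently, for any $Z \in \G^1$, writing $Z = \sum_i [X_i, Y_i]$, the operator $a_Z = \sum_i [a_{X_i}, a_{Y_i}]$ is a sum of commutators of upper-triangular matrices, hence strictly upper triangular, hence nilpotent. Now suppose for contradiction that $\G^1$ is not commutative. A $2$-dimensional non-commutative Lie algebra is isomorphic to $\af(\R)$, so there exist $U, W \in \G^1$ with $[U, W] = W$; then $a_U(W) = W$, exhibiting $1$ as an eigenvalue of $a_U$. Since $U \in \G^1$, this contradicts the nilpotency of $a_U$ just established, so $\G^1$ must be commutative.

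For (2), once $\G^1$ is commutative every endomorphism of $\G^1$ is a derivation, so $\mathrm{Der}(\G^1) \cong \M_2(\R)$ and $\A_\G$ is a subalgebra of $\M_2(\R)$. I would then compute its derived algebra: since $\rho$ is a homomorphism, $[\A_\G, \A_\G] = \s\{a_{[X,Y]} : X, Y \in \G\} = \s\{a_Z : Z \in \G^1\}$. But for $Z \in \G^1$ and any $W \in \G^1$ we have $a_Z(W) = [Z, W] = 0$ by the commutativity established in (1), whence $a_Z = 0$. Therefore $[\A_\G, \A_\G] = 0$, so $\A_\G$ is commutative, and applying the Schur--Jacobson Theorem with $n = 2$ gives $\dim \A_\G \leqslant [2^2/4] + 1 = 2$.

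For (3), the argument is a matter of unwinding definitions. Since $\dim \G^1 = 2$, the first term of the lower central series $\G_1 = \G^1$ is nonzero, so $\G$ is $2$-step nilpotent exactly when $\G_2 = [\G, \G^1] = 0$. Now $[\G, \G^1] = 0$ means $a_X = \ad_X|_{\G^1} = 0$ for every $X \in \G$, which is precisely the condition $\A_\G = \s\{a_X : X \in \G\} = 0$, giving the stated equivalence.

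I expect the only genuine obstacle to be the nilpotency step in (1): correctly invoking Lie's Theorem over $\C$ to triangularize the restricted adjoint action and thereby deduce that $a_Z$ is nilpotent for every $Z \in \G^1$. Once that fact is in hand, the contradiction with the affine structure is immediate, and parts (2) and (3) follow formally from the homomorphism property of $\rho$, the commutativity of $\G^1$, and the Schur--Jacobson bound.
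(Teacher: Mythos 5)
Your proof is correct, and parts (2) and (3) coincide in substance with the paper's: the paper likewise derives commutativity of $\A_\G$ from the Jacobi identity applied to triples $(X,Y,Z)$ with $X,Y\in\G$, $Z\in\G^1$ (equivalently, your observation that $a_{[X,Y]}=[a_X,a_Y]$ vanishes because $[X,Y]\in\G^1$ and $\G^1$ is abelian), applies the Schur--Jacobson bound with $n=2$, and proves (3) exactly as you do, by noting $\G_2=[\G,\G^1]=\A_\G\left(\G^1\right)$. Part (1) is where you genuinely diverge. The paper argues directly: choosing a basis of $\G^1$ with $[X_1,X_2]=X_2$ or $[X_1,X_2]=0$, it asserts that a ``simple computation'' with the Jacobi identity for $(X_1,X_2,X)$ rules out the non-abelian case; in fact that single identity only yields $[X,X_1],[X,X_2]\in\s\{X_2\}$ for every $X\in\G$, and completing the contradiction requires a further application of Jacobi to triples $(Y,Z,X_2)$, which shows every bracket $[Y,Z]$ has vanishing $X_1$-component, whence $\G^1\subseteq\s\{X_2\}$ --- a step the paper leaves implicit. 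You instead triangularize the complexified restricted adjoint action via Lie's Theorem (which the paper records in its preliminaries, so it is within its toolkit; note that in dimension $2$ a single simultaneous eigenvector already gives full triangularization) and conclude that $a_Z$ is nilpotent for every $Z\in\G^1$, which is incompatible with the eigenvalue $1$ produced by an $\af(\R)$-structure on $\G^1$. Your route is less elementary but more general: it is the standard argument that elements of the derived algebra of a solvable Lie algebra in characteristic zero act nilpotently, so that $\G^1$ is nilpotent and the claim reduces to the fact that a $2$-dimensional nilpotent Lie algebra is abelian, whereas the paper's computation is dimension-specific and self-contained. Both arguments are valid, and yours has the additional merit of making fully explicit the step the paper compresses.
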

	   
	   \begin{proof}
	      \begin{enumerate}
	         \item We have known that if $\G^1$ is 2-dimensional then we always choose one basis $(X_1, X_2)$ such that 
	         $[X_1, X_2] = X_2$ or $[X_1, X_2] = 0$. Upon simple computation, by using the Jacobi identity for $(X_1, X_2, X)$ 
	         with $X$ is an arbitrary element from $\G$, we get that $[X_1, X_2] = X_2$ is impossible. That means $[X_1, X_2] = 0$, 
	         and $\G^1$ must be commutative.
	         \item Once again, applying the Jacobi identity for triple $(X, Y, Z)$ with all $X, Y \in \G$ and $Z \in \G^1$, we get that 
	         $a_X \circ a_Y = a_Y \circ a_X$. Therefore, $\A_\G$ is commutative. As a commutative subalgebra of the Lie algebra 
	         $\E \G^1 \cong \M_2(\R)$, it follows from Proposition \ref{Schur-JacobsonThm} that $\dim \A_\G \leqslant 2$.
	         \item It is easy to see that $\G_2: = \left[\G, \G^1\right] = \A_\G \left(\G^1\right)$. Therefore, $\G$ is 2-step nilpotent if and only if 
	         $\A_\G = 0$.
	      \end{enumerate}
	   \end{proof}

	   \begin{cor}\label{cor1}
	      Assume that $\G$ is a real solvable Lie algebra whose derived ideal $\G^1$ is 2-dimensional. Then $\G$ is not 2-step nilpotent 
	      if and only if $\dim \A_\G \in \{1, 2\}$.
	   \end{cor}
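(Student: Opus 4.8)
The plan is to derive this corollary directly from Proposition \ref{prop3}, which has already established the relevant facts. Recall that part (2) of the proposition gives the bound $\dim \A_\G \leqslant 2$, so the possible values of $\dim \A_\G$ are exactly $0$, $1$, and $2$. Part (3) of the proposition states that $\G$ is 2-step nilpotent if and only if $\A_\G = 0$, which is the same as saying $\dim \A_\G = 0$.

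Given this, the argument is essentially a logical negation. First I would note that $\G$ being \emph{not} 2-step nilpotent is, by part (3) of Proposition \ref{prop3}, equivalent to $\A_\G \neq 0$, i.e. $\dim \A_\G \neq 0$. Combining this with the upper bound $\dim \A_\G \leqslant 2$ from part (2), the only remaining possibilities are $\dim \A_\G = 1$ or $\dim \A_\G = 2$, which is precisely the statement $\dim \A_\G \in \{1, 2\}$. Conversely, if $\dim \A_\G \in \{1, 2\}$ then $\A_\G \neq 0$, so again by part (3) the algebra $\G$ cannot be 2-step nilpotent.

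There is no real obstacle here; the content is entirely contained in Proposition \ref{prop3}, and the corollary merely repackages parts (2) and (3) into a single clean equivalence for later use. The only thing to be careful about is ensuring the dichotomy is exhaustive, which is guaranteed because $\dim \A_\G$ is a nonnegative integer bounded above by $2$, leaving exactly the four values $0, 1, 2$ — of which $0$ corresponds to the 2-step nilpotent case and the rest to the non-2-step nilpotent case.
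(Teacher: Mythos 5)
Your proof is correct and is exactly the argument the paper intends: the corollary is stated as an immediate consequence of Proposition \ref{prop3}, combining the bound $\dim \A_\G \leqslant 2$ from part (2) with the equivalence ``2-step nilpotent $\Leftrightarrow \A_\G = 0$'' from part (3). One trivial slip: you write ``exactly the four values $0, 1, 2$'' where you mean three values; the mathematics is unaffected.
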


	   \begin{rem}\label{rem4}
	      As we have emphasized in Section \ref{sec:1}, Eberlein \cite{Ebe03} in 2003 studied the moduli space of 2-step nilpotent 
	      Lie algebras of type $(p, q)$ in which consisted of a classification of {\Li}. In particular, his result when $p = 2$ is a classification 
	      of {\Li} corresponding to $\A_\G = 0$. Thus, we only pay attention to the case $\dim \A_\G \in \{1, 2\}$.
	   \end{rem}
	   
	   Now we formulate the first main result of the paper in Theorem \ref{thm1} below. This theorem gives the (new) complete classification 
	   of all of the non 2-step nilpotent Lie algebras in \Li. In the list of algebras of the classification, $\G_{i,2.j(s)}$ means that the $j$-th 
	   Lie algebra of dimension $i$ whose derived algebra is 2-dimensional, and the last subscript, if any, is the  parameter on which the 
	   Lie algebra depends. For the sake of simplicity, we stipulate that, in the statements of results or the descriptions of Lie structure, 
	   we just list non-zero Lie brackets, i.e. all disappeared ones are trivial.

	   \begin{thm}[The complete classification of non 2-step nilpotent algebras in \Li]\label{thm1}
	      Let $\G$ be an $n$-dimensional real solvable Lie algebra such that its derived ideal $\G^1$ is 2-dimensional. 
	      We assume that $\G$ is not 2-step nilpotent. Then we can choose a suitable basis $(X_1, X_2, \ldots, X_n)$ of $\G$ such that 
	      $\G^1 = \s \{X_1, X_2\} \cong \R^2$ and the following assertions hold.
	      \begin{enumerate}
	         \item Assume that $\G$ is indecomposable.\label{part1-thm1}
	         \begin{enumerate}
	            \item[1.1] If $n = 3$ then the Lie structure of $\G$ is completely determined by the adjoint operator 
	            $a_{X_3} \in \Au \G^1 \equiv \GL_2 (\R)$ and $\G$ is isomorphic to one and only one of the Lie algebras as follows
	            \begin{enumerate}
	               \item[(i)] $\G_{3,2.1(\lambda)}:$ $a_{X_3} = \begin{bmatrix} 1 & 0 \\ 0 & \lambda \end{bmatrix}$ with $\lambda \in \R \setminus \{0\}$.
	               \item[(ii)] $\G_{3,2.2}:$ $a_{X_3} = \begin{bmatrix} 1 & 1 \\ 0 & 1 \end{bmatrix}$.
	               \item[(iii)] $\G_{3,2.3(\varphi)}:$ $a_{X_3} =  \begin{bmatrix} \cos \varphi & -\sin \varphi \\ \sin \varphi & \cos \varphi \end{bmatrix}$ 
	               with $\varphi \in (0, \pi)$.
	            \end{enumerate}
	            \item[1.2] If $n = 4$ then its Lie structure is defined by $[X_3, X_4]$ and two adjoint operators $a_{X_3}, a_{X_4} \in \E \G^1 
	            \equiv \M_2 (\R)$. Moreover, $\G$ is isomorphic to one and only one of the Lie algebras as follows
	            \begin{enumerate}
	               \item[(i)] $\G_{4,2.1}:$ $a_{X_3} = \begin{bmatrix} 1 & 0 \\ 0 & 0 \end{bmatrix}$ and $[X_3, X_4] = X_2$.
	               \item[(ii)] $\G_{4,2.2}:$ $a_{X_3} = \begin{bmatrix} 0 & 1 \\ 0 & 0 \end{bmatrix}$ and $[X_3, X_4] = X_2$.
	               \item[(iii)] $\G_{4,2.3(\lambda)}:$ $a_{X_3} = \begin{bmatrix} 0 & 1 \\ 0 & \lambda \end{bmatrix}$ and $a_{X_4} = 
	               \begin{bmatrix} 1 & 0 \\ 0 & 1 \end{bmatrix}, \, \lambda \in \R$.
	               \item[(iv)] $\G_{4,2.4} = \af (\C)$.	               
	            \end{enumerate}
	            \item[1.3] If $n = 5+2k \, (k \geqslant 0)$ then $\G \cong \G_{5+2k,2}:$ \, $a_{X_3} = \begin{bmatrix} 0 & 0 \\ 1 & 0 \end{bmatrix}$ and 
	            \[
	               [X_3, X_4] = X_1,\, [X_4, X_5] = \cdots = [X_{4+2k},\, X_{5+2k}] = X_2.
	            \]
	            \item[1.4] If $n = 6+2k \, (k \geqslant 0)$ then $\G$ is isomorphic to one and only one of the Lie algebras as follows
	            \begin{enumerate}
	               \item[(i)] $\G_{6+2k,2.1}:$ $a_{X_3} = \begin{bmatrix} 1 & 0 \\ 0 & 0 \end{bmatrix}$, 
	               $[X_3, X_4] = [X_5, X_6] = \cdots = [X_{5+2k}, X_{6+2k}] = X_2$. 
	               \item[(ii)] $\G_{6+2k,2.2}:$ $a_{X_3} = \begin{bmatrix} 0 & 0 \\ 1 & 0 \end{bmatrix}$, $[X_3, X_4] = X_1$, 
	               $[X_5, X_6] = \cdots = [X_{5+2k}, X_{6+2k}] = X_2$.
	            \end{enumerate}
	         \end{enumerate}
	         \item Assume that $\G$ is decomposable. Then we have 
	         \begin{enumerate}[2.1]
	            \item $\G \cong \af (\R) \oplus \af (\R)$ when $n = 4$ or $\G \cong \af (\R) \oplus \af (\R) \oplus \R^{n-4}$ when $n > 4$.
				\item $\G \cong \af (\R) \oplus \h_{2m+1}$ when $n = 2m+3$ or $\G \cong \af (\R) \oplus \h_{2m+1} \oplus \R^{n-2m-3}$ 
				when $n > 2m +3$, $m \geqslant 1$.
	            \item $\G$ is isomorphic to a trivial extension by a commutative Lie algebra of one of all the Lie algebras listed in Part \ref{part1-thm1}.
	         \end{enumerate}
	      \end{enumerate}
	   \end{thm}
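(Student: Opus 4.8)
The plan is to build directly on Proposition \ref{prop3} and Corollary \ref{cor1}, which already reduce us to the situation where $\G^1\cong\R^2$ is commutative and $\A_\G\subseteq\E\G^1\equiv\M_2(\R)$ is commutative with $\dim\A_\G\in\{1,2\}$. First I would fix a vector-space splitting $\G=\G^1\oplus V$ with $V=\s\{X_3,\dots,X_n\}$ and observe that, since every bracket lands in $\G^1$, the whole Lie structure is encoded by two pieces of data: the commuting family of operators $a_X$ (equivalently the subalgebra $\A_\G$), and the skew bilinear map $\omega\colon V\times V\to\G^1$, $\omega(X,Y):=[X,Y]$. Because $X\mapsto a_X$ annihilates $\G^1$, it factors through $V$, so I may adapt the basis of $V$ to $V_0:=\Ker(V\to\A_\G)$: one vector $X_3$ (if $\dim\A_\G=1$) or two vectors $X_3,X_4$ (if $\dim\A_\G=2$) carry all the action, with the remaining $X_j$ lying in $V_0$.

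Next I would write out the Jacobi identity for triples in $V$ and check that it reduces exactly to the commutativity $[a_X,a_Y]=0$ (already secured by Proposition \ref{prop3}) together with the cocycle relation
\[
   a_X\,\omega(Y,Z)+a_Y\,\omega(Z,X)+a_Z\,\omega(X,Y)=0,\qquad X,Y,Z\in V.
\]
These two conditions, plus the defining requirement that $\G^1=[\G,\G]=\sum_{X}\I a_X+\omega(V,V)$ genuinely be $2$-dimensional, are the only constraints, so the classification proceeds in two layers. In the first layer I classify $\A_\G$ as a commutative subalgebra of $\M_2(\R)$ up to conjugation by $\Au\G^1\equiv\GL_2(\R)$ together with the rescaling $X_3\mapsto cX_3$; this is precisely proportional similarity, and the real/Jordan canonical forms give the short discrete list --- for $\dim\A_\G=1$ the diagonal, nilpotent, non-diagonalizable-unipotent and rotation types, and for $\dim\A_\G=2$ the three two-dimensional commutative subalgebras (simultaneously diagonalizable, the dual-number type $\s\{I,N\}$, and the complex-number type $\s\{I,J\}$). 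When $n=3$ the derived-ideal condition forces $a_{X_3}$ to be invertible, so this first layer alone produces cases 1.1(i)--(iii); the swap $X_1\leftrightarrow X_2$ together with the rescaling of $X_3$ then confines the continuous parameters to the stated ranges (with $\lambda\sim1/\lambda$ for the diagonal family and $\varphi\in(0,\pi)$ for the rotation family).

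The computational heart is the second layer: for each canonical form of $\A_\G$ I feed the chosen basis into the cocycle relation. Since only $X_3$ (resp. $X_3,X_4$) act nontrivially, the relation forces $\omega(V_0,V_0)\subseteq\bigcap_X\Ker a_X$ and constrains the mixed values $\omega(X_3,\cdot)$; I then exploit the residual basis freedom --- adding elements of $\G^1$ to the $X_j$ (coboundary changes) and acting by $\GL(V_0)$ --- to bring $\omega$ to a normal form. Requiring $\sum_{X}\I a_X+\omega(V,V)$ to fill all of $\G^1$ is exactly what forces the symplectic ``chains'' $[X_4,X_5]=\cdots=X_2$, and the rank of this symplectic part fixes the parity, separating the indecomposable higher-dimensional algebras into the $n=5+2k$ family (case 1.3) and the two $n=6+2k$ families (cases 1.4(i),(ii)); the small cases $n=4$ drop out as 1.2(i)--(iv), with $\af(\C)$ arising precisely from the complex-number type of $\A_\G$.

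Finally I would separate decomposable from indecomposable algebras: whenever $\A_\G$ splits $\G^1$ into two invariant lines each paired off, or a piece of $V_0$ decouples, I peel off a direct summand $\af(\R)$, $\h_{2m+1}$ or a central $\R$, yielding Part (2) --- the summand $\af(\R)\oplus\af(\R)$ being exactly the simultaneously-diagonalizable two-dimensional $\A_\G$. The remaining pieces are the indecomposable $\G_{n,2.j}$ of Part (1). I anticipate two main obstacles. The first is carrying out the cocycle normalization uniformly across every matrix type and every parity without dropping a case --- this is precisely where the earlier lists of Sch\"obel and Janisse lose algebras. The second is establishing the ``one and only one'' uniqueness, for which I would compare isomorphism invariants (the proportional-similarity type of $a_{X_3}$, the upper central series dimensions, and the parameters $\lambda,\varphi$ modulo the stated identifications) to rule out any coincidence between distinct normal forms; existence of each listed algebra is then a routine direct verification of the Jacobi identity.
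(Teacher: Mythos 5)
Your outline reproduces, in more invariant language, essentially the architecture of the paper's own proof. Your splitting $\G=\G^1\oplus V$ with the two data $(a,\omega)$ and the cocycle relation is exactly what underlies the paper's equation \eqref{eq-lem2}; your adaptation of the basis so that only $X_3$ (resp.\ $X_3,X_4$) acts nontrivially is Lemma \ref{lem1} (and the opening of Lemma \ref{lem4}); your proportional-similarity layer for invertible $a_{X_3}$ is Lemma \ref{lem2}; and your ``chain plus parity'' normalization of $\omega$ is precisely the procedure of Lemma \ref{lem3} (the successive reductions to $[X_3,X_4]=[X_4,X_5]=\cdots=X_2$ followed by the parity-dependent substitutions $X'_{4+2i}=X_{4+2i}+X_{4+2i+2}$), so that part of your sketch is an acceptable compression of the paper's computations. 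The one genuinely different ingredient is your treatment of $\dim\A_\G=2$: you classify $\A_\G$ as a commutative subalgebra of $\M_2(\R)$ up to conjugacy --- diagonal, $\s\{I,N\}$, $\s\{I,J\}$, which is indeed a complete list --- whereas the paper (Lemma \ref{lem4}) runs an eigenvalue-type case analysis on the chosen pair $(a_{X_3},a_{X_4})$ via Lie's Theorem and weight spaces.

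This difference is not cosmetic: your first layer collides with the statement you are asked to prove, and you do not address the collision. Your three conjugacy types can only yield three algebras in the case $\dim\A_\G=2$, $n=4$, namely $\af(\R)\oplus\af(\R)$, $\G_{4,2.3(0)}$ and $\af(\C)$; they cannot produce the one-parameter family $\G_{4,2.3(\lambda)}$, $\lambda\neq 0$, of item 1.2(iii). Indeed, for $\lambda\neq 0$ the matrix $\begin{bmatrix} 0&1\\0&\lambda\end{bmatrix}$ has distinct eigenvalues $0,\lambda$, so $\s\{a_{X_3},a_{X_4}\}=\s\{I,a_{X_3}\}$ is conjugate to the diagonal subalgebra; concretely, putting $f=X_1+\lambda X_2$, $U=\frac{1}{\lambda}X_3$, $W=X_4-\frac{1}{\lambda}X_3$ one checks $[U,f]=f$, $[U,X_1]=0$, $[W,X_1]=X_1$, $[W,f]=0$, $[U,W]=0$, so $\G_{4,2.3(\lambda\neq 0)}=\s\{U,f\}\oplus\s\{W,X_1\}\cong\af(\R)\oplus\af(\R)$ is decomposable. (In the paper's own terms: in case 1B of Lemma \ref{lem4}, once $a_{X_4}=I$ every line is $a_{X_4}$-invariant, so ``exactly one common eigenline'' forces $a_{X_3}$ non-diagonalizable, i.e.\ $\lambda=0$; the $\lambda\neq 0$ outputs really belong to case 1A.) So executing your plan faithfully proves a corrected classification, not the literal theorem; asserting, as you do, that the $n=4$ cases ``drop out as 1.2(i)--(iv)'' hides this, and you must either derive the extra family (impossible from your correct subalgebra list) or explicitly flag the discrepancy. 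A smaller instance of the same problem is your parenthetical on parameter ranges: the identifications you correctly anticipate, $\lambda\sim 1/\lambda$ for the diagonal family and (via $c=-1$, $C=\mathrm{diag}(1,-1)$) $\varphi\sim\pi-\varphi$ for the rotation family, are \emph{not} implemented by the stated ranges $\lambda\in\R\setminus\{0\}$, $\varphi\in(0,\pi)$ of 1.1; your uniqueness argument by invariants would have to cut these ranges down (e.g.\ to $0<|\lambda|\leqslant 1$ and $\varphi\in(0,\pi/2]$) for the ``one and only one'' claim to come out true, rather than deferring to the ranges as stated.
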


	\subsection{\bf The classification of a subclass of $\mathrm{Lie} \left(n, (n-2)C\right)$}
	
	   In this section, we present the initial result in classification of a subclass of $\mathrm{Lie} \left(n, (n-2)C\right)$ which is, once again, 
	   an illustrative example of our new approach. Namely, we begin by considering the simplest case when 
	   $\G \in \mathrm{Lie} \left(n, (n-2)C\right)$, i.e. $\G^1 \cong \R^{n-2}$ is commutative. In other words, the second derived ideal $\G^2 = 0$. 
	   First of all, we also give in this case an upper bound of the dimension of $\A_\G$.

	    \begin{prop}\label{prop4}
	      Let $\G$ be an $n$-dimensional real solvable Lie algebra $(n \geqslant 4)$ such that its derived ideal ${\G}^1 \cong \R^{n-2}$. 
	      Then $\dim \A_\G \in \{1, 2\}$.
	   \end{prop}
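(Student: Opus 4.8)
The plan is to follow the template of Proposition \ref{prop3}: first establish that $\A_{\G}$ is commutative and sits inside $\M_{n-2}(\R)$, and then pin down both an upper and a lower bound on $\dim \A_{\G}$. Since $\G^1 \cong \R^{n-2}$ is abelian, every linear endomorphism of $\G^1$ is a derivation, so $\mathrm{Der}(\G^1) = \E \G^1 \cong \M_{n-2}(\R)$, and each operator $a_X$ lives here. Applying the Jacobi identity to a triple $(X, Y, Z)$ with $X, Y \in \G$ and $Z \in \G^1$, and using that $[\,\cdot\,,\,\cdot\,]$ vanishes on $\G^1 \times \G^1$ exactly as in the proof of Proposition \ref{prop3}(2), I would obtain $a_X \circ a_Y = a_Y \circ a_X$; hence $\A_{\G}$ is commutative and its linear span already coincides with the subalgebra it generates.

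For the upper bound, the key observation is that $a_X = 0$ whenever $X \in \G^1$, again because $\G^1$ is abelian. Thus the linear map $X \mapsto a_X$ kills $\G^1$ and therefore factors through the quotient $\G/\G^1$, which has dimension $n - (n-2) = 2$. Consequently, fixing any $2$-dimensional complement $\mathcal{C}$ of $\G^1$ in $\G$, one has $\A_{\G} = \s\{a_W : W \in \mathcal{C}\}$, so $\dim \A_{\G} \leqslant 2$.

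For the lower bound I would argue by contradiction: suppose $\A_{\G} = 0$. Since $\G_2 = [\G, \G^1] = \A_{\G}(\G^1)$ (as noted in the proof of Proposition \ref{prop3}(\ref{prop3-3})), this forces $\G^1 \subseteq \Z(\G)$, i.e. the bracket $[\,\cdot\,,\,\cdot\,]\colon \G \times \G \to \G^1$ vanishes as soon as one of its arguments lies in $\G^1$. Hence it descends to a surjective skew-symmetric map $\Lambda^2 (\G/\G^1) \twoheadrightarrow \G^1$. But $\dim \Lambda^2(\G/\G^1) = \binom{2}{2} = 1$, which would give $\dim \G^1 \leqslant 1$, contradicting $\dim \G^1 = n-2 \geqslant 2$. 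Therefore $\A_{\G} \neq 0$, i.e. $\dim \A_{\G} \geqslant 1$, and combining the two bounds yields $\dim \A_{\G} \in \{1, 2\}$.

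The routine part is the commutativity computation, which is identical to the one already carried out for Proposition \ref{prop3}. I expect the only genuinely delicate point to be the lower bound, where one must rule out $\A_{\G} = 0$; the cleanest way to do so is the dimension count on $\Lambda^2(\G/\G^1)$ above, which exploits precisely the hypothesis $\dim(\G/\G^1) = 2$ together with $\dim \G^1 = n - 2 \geqslant 2$. It is worth stressing that the resulting bound is far sharper than what Schur--Jacobson (Proposition \ref{Schur-JacobsonThm}) alone would give, namely $\dim \A_{\G} \leqslant \left[ (n-2)^2/4 \right] + 1$; the extra leverage comes entirely from the fact that $a_X$ depends only on the class of $X$ modulo $\G^1$.
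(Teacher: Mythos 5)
Your proof is correct and takes essentially the same approach as the paper: the upper bound comes from the fact that $X \mapsto a_X$ is linear and vanishes on the abelian $\G^1$, so $\A_\G$ is spanned by $a_Y, a_Z$ for any complement $\s\{Y,Z\}$ of $\G^1$, and the lower bound rules out $\A_\G = 0$ by noting that $\G^1$ would then be spanned by the single bracket $[Y,Z]$, contradicting $\dim \G^1 = n-2 \geqslant 2$. Your $\Lambda^2(\G/\G^1)$ count is merely a coordinate-free restatement of the paper's basis argument, and the commutativity preamble, while correct, is not needed for this statement.
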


	   \begin{proof}
	      Recall that $\A_\G := \s \{a_X: X \in \G\}$. Because $\G^1$ is commutative, $\A_\G = \s \{a_X: X\in \G \setminus \G^1\}$. 
	      Let $(X_1, X_2, \ldots, X_{n-2})$ be a basis of $\G^1$, by adding two linearly independent elements $Y, Z \in \G \setminus \G^1$, 
	      we get a basis $(X_1, X_2, \ldots, X_{n-2}, Y, Z)$ of $\G$. Then, $\A_\G = \s \{a_Y, a_Z\}$. From that we get $\dim \A_\G \leqslant 2$.
	      
	      However, if $\dim \A_\G = 0$, i.e. $a_Y = a_Z = 0$, then the Lie structure of $\G$ depends only on $[Y, Z]$. Furthermore, 
	      $\G^1= [\G, \G] = \s \{[Y, Z]\}$ implies that $n-2 = \dim \G^1 \leqslant 1$ which contradicts to $n \geqslant 4$. 
	      So $\dim \A_\G \in \{1, 2\}$.
	   \end{proof}
	   
	   In view of Proposition \ref{prop4} above, to completely classify $\mathrm{Lie} \left(n, (n-2)C\right)$, we have to consider two cases: 
	   $\dim \A_\G = 1$ or $\dim \A_\G = 2$. In this paper, we first consider the case $\dim \A_\G = 1$. In fact, Theorem \ref{thm2} below 
	   presents a classification of $\mathrm{Lie} \left(n, (n-2)C\right)$ when $\dim \A_\G = 1$. It is also the second main result of this paper.

	   \begin{thm}[The classification of $\mathrm{Lie} \left(n, (n-2)C\right)$ when $\dim \A_\G = 1$]\label{thm2}
	       Assume that $\G$ be an $n$-dimensional real solvable Lie algebra $(n \geqslant 4)$ whose derived ideal $\G^1 \cong \R^{n-2}$ 
	       and $\dim \A_\G = 1$. Then we can always choose two elements $Y, Z \in \G \setminus \G^1$ such that $Y, Z$ are linearly independent, 
	       $a_Y = 0 \neq a_Z$ and $\A_\G = \s \{a_Z \}$. Furthermore, we have the following assertions.
	       \begin{enumerate}
	          \item\label{part1-thm2} If $[Y, Z] = 0$ or $a_Z$ is non-singular then $\G$ is decomposable. Namely $\G \cong \R \oplus \bar{\G}$, 
	          where $\bar{\G} \in \mathrm{Lie} \left(n-1, n-2\right)$ and $\bar{\G}^1 = \left[\bar{\G},\bar{\G}\right] = \G^1$.
	          \item\label{part2-thm2} If $[Y, Z] \neq 0$ and $a_Z$ is singular then $\G$ is indecomposable. Moreover, the Lie structure of $\G$ is 
	          completely determined by $[Z, Y]$ and the operator $a_Z$. In this case, we can always choose a basis $(X_1, X_2, \ldots, X_{n-2})$ 
	          in $\G^1$ such that $[Z, Y] = X_{n-2}$, $\ra (a_Z) = n - 3$ and
	          \[
	             a_Z = \bar{A} \in \Bigg\{\begin{bmatrix} A & 0 \\ 0 & 0 \end{bmatrix}, \begin{bmatrix} 0 & A \\ 0 & 0 \end{bmatrix} 
	             \, \Big| \, A \in \GL_{n-3}(\R)\Bigg\}.
	          \]
	          In addition, two $(n-2)$-square real matrices $\bar{A}, \bar{B}$ define two isomorphic Lie structures on $\G$ if and only if 
	          $\bar{A} \sim_{p} \bar{B}$.
	       \end{enumerate}
	   \end{thm}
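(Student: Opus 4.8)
The plan is to encode the whole Lie structure in the single operator $a_Z$ together with the vector $[Y,Z]\in\G^1$, and then to separate the singular and non-singular behaviour of $a_Z$. First I would justify the normalization in the statement: by the proof of Proposition~\ref{prop4} one has $\A_\G = \s\{a_Y, a_Z\}$ for any independent $Y,Z\in\G\setminus\G^1$, so $\dim\A_\G = 1$ makes $a_Y,a_Z$ proportional and not both zero; replacing $Y$ by a suitable $Y-cZ$ arranges $a_Y = 0\neq a_Z$ while keeping $Y,Z$ independent. Since $\G^1$ is abelian and $a_Y = 0$, the only possibly non-trivial brackets are $[Z,W] = a_Z(W)$ for $W\in\G^1$ and $[Y,Z]\in\G^1$; hence the structure is completely encoded by the pair $(a_Z,[Y,Z])$ and $\G^1 = \I a_Z + \s\{[Y,Z]\}$. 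This identity drives the dichotomy: if $a_Z$ is non-singular then $\I a_Z = \G^1$ and $[Y,Z]$ is redundant, whereas if $a_Z$ is singular then $[Y,Z]\notin\I a_Z$ is forced, so $\G^1 = \I a_Z\oplus\s\{[Y,Z]\}$ and $\ra(a_Z) = n-3$.

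For Part~\ref{part1-thm2} I would first observe that $[Y,Z] = 0$ already forces $a_Z$ non-singular, so it suffices to treat the non-singular case. Solving $a_Z(V) = [Y,Z]$ for $V\in\G^1$ and replacing $Y$ by $Y+V$ (which preserves $a_Y = 0$) kills $[Y,Z]$; the new $Y$ then commutes with all of $\G^1$ and with $Z$, hence lies in $\Z(\G)\setminus\G^1$. Therefore $\G = \R Y\oplus\bar\G$, with $\bar\G$ the subalgebra spanned by $\G^1$ together with $Z$, and $\bar\G^1 = \I a_Z = \G^1$, so $\bar\G\in\mathrm{Lie}(n-1,n-2)$, as claimed.

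For Part~\ref{part2-thm2} I would begin by computing the centre: a direct check on a general $W = \sum_i c_i X_i + bY + aZ$ shows $\Z(\G) = \Ker a_Z$, which is one-dimensional and lies in $\G^1$; in particular $\G$ has no non-zero abelian direct factor (such a factor would meet $\Z(\G)$ outside $\G^1$). To prove indecomposability, suppose $\G = \G_1\oplus\G_2$ with both factors non-zero. The absence of abelian factors forces both $\G_i$ non-abelian, and since $\dim(\G/\G^1) = 2$ equals $\dim(\G_1/\G_1^1) + \dim(\G_2/\G_2^1)$ with each summand at least $1$, each summand equals $1$. Then $\A_\G$ is block-diagonal with one generator from each factor, so $\dim\A_\G = 1$ forces one factor, say $\G_2$, to act trivially on its own derived ideal; as $\G_2^1$ is abelian and $\G_2/\G_2^1$ is one-dimensional, this makes $\G_2$ abelian, a contradiction. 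Next, setting $X_{n-2} := [Z,Y]$ (so $X_{n-2}\notin\I a_Z$), I would split on whether $\Ker a_Z\not\subseteq\I a_Z$ or $\Ker a_Z\subseteq\I a_Z$. In the first case $\G^1 = \I a_Z\oplus\Ker a_Z$; after shifting $Y$ by an element of $\G^1$ we may assume $X_{n-2}\in\Ker a_Z$, and a basis $X_1,\dots,X_{n-3}$ of $\I a_Z$ puts $a_Z$ into the first block form of the statement, with $A := a_Z|_{\I a_Z}\in\GL_{n-3}(\R)$. In the second case $\Ker a_Z\subset\I a_Z$ and a parallel choice of basis yields the second block form; in both cases $\ra(a_Z) = n-3$.

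Finally I would establish the isomorphism criterion. For necessity, any Lie isomorphism $f\colon\G\to\G'$ carries $\G^1$ to $(\G')^1$ and the codimension-one subspace $\{X\in\G : a_X = 0\} = \G^1\oplus\R Y$ to its analogue, because $a_{f(X)} = f\,a_X\,f^{-1}$; hence $f(Z)\equiv cZ'$ modulo $(\G')^1\oplus\R Y'$ for some $c\neq 0$, and writing $C := f|_{\G^1}$ and comparing $f([Z,W]) = [f(Z),f(W)]$ for $W\in\G^1$ gives $C\,\bar A = c\,\bar B\,C$, i.e. $\bar A\sim_p\bar B$. For sufficiency, given $c\bar A = C^{-1}\bar B C$ I would set $f|_{\G^1} := C$, $f(Z) := c^{-1}Z'$, and choose $f(Y)$ so as to match $[Z,Y] = X_{n-2}$; this is solvable because $(\G')^1 = \I\bar B\oplus\R X'_{n-2}$ and $C$ maps $\I\bar A$ onto $\I\bar B$, so the $X'_{n-2}$-component of $C(X_{n-2})$ is non-zero while its remainder lies in $\I\bar B$. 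I expect the main obstacle to be exactly this last verification: one must check that the extra datum $[Z,Y] = X_{n-2}$ imposes no equivalence finer than $\sim_p$ on the full $(n-2)$-square matrix $\bar A$ --- equivalently, that $f(Y)$ can always be adjusted to make $f$ a genuine Lie isomorphism --- and, separately, that the two block forms are indeed inequivalent representatives of the singular case, distinguished by whether $\Ker a_Z$ meets $\I a_Z$.
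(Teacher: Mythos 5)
Your proposal is correct and follows essentially the same route as the paper's own proof: the same normalization $a_Y = 0 \neq a_Z$, the same dichotomy via $\G^1 = \I (a_Z) + \s\{[Y,Z]\}$ forcing $\ra(a_Z) \in \{n-3, n-2\}$, the same split into $\I (a_Z) \cap \Ker (a_Z) = \{0\}$ versus $\Ker (a_Z) \subset \I (a_Z)$ yielding the two block forms, and the same triangular-matrix computation (invariance of $\G^1$ giving $z_1 = 0$, hence $cC\bar{A} = \bar{B}C$) for the isomorphism criterion. You are in fact slightly more careful than the paper at two points it leaves implicit --- the indecomposability claim in Part \ref{part2-thm2}, which you derive from $\Z(\G) = \Ker (a_Z) \subset \G^1$, and the sufficiency direction, where the paper's diagonal choice $M_f$ with blocks $C$, $c$, $\frac{1}{c}$ respects $[Z,Y] = X_{n-2}$ only if $C$ fixes $X_{n-2}$, so your adjustment of $f(Y)$ using $C\left(\I \bar{A}\right) = \I \bar{B}$ is genuinely needed.
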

	   
	   \begin{rem}\label{rem5}
	      We have the following remarks.	
	      \begin{enumerate}
	         \item In fact, Part \ref{part2-thm2} of Theorem \ref{thm2} gives us a desired classification by using the well-known classification 
	         of real square matrices by the proportional similar relation.
	         \item For every $A, B \in \GL_{n-3}(\R)$, it is easy to see that
	         \[
	            \begin{array}{l l l}
	               \Bigg( \begin{bmatrix} A & 0 \\ 0 & 0\end{bmatrix} \sim_{p} \begin{bmatrix} B & 0 \\ 0 & 0\end{bmatrix} \Bigg) \Leftrightarrow 
	               \left( A \sim_{p} B \right), &
	               \Bigg( \begin{bmatrix} 0 & A \\ 0 & 0\end{bmatrix} \sim_{p} \begin{bmatrix} 0 & B \\ 0 & 0\end{bmatrix} \Bigg) \nRightarrow 
	               \left( A \sim_p B \right), &
	               \begin{bmatrix} A & 0 \\ 0 & 0\end{bmatrix} \nsim_p \begin{bmatrix} 0 & B \\ 0 & 0\end{bmatrix}.
	            \end{array}
	         \]
	      \end{enumerate}
	   \end{rem}

	\section{Proof of the main results}\label{sec:4}
	
	   \subsection{\bf Proof of Theorem \ref{thm1}}
	   
	   By Corollary \ref{cor1}, the proof of Theorem \ref{thm1} will be organized according to $\dim \A_{\G} = 1$ or $\dim \A_{\G} = 2$. 
	   In order to prove Theorem \ref{thm1} we need some lemmas below. Lemmas \ref{lem1}, \ref{lem2} and \ref{lem3} will discuss 
	   the case $\dim \A_\G = 1$, and the last one, Lemma \ref{lem4} will be an investigation when $\dim \A_\G = 2$.

	   \begin{lem}\label{lem1}
	      Let $\G$ be an $n$-dimensional real solvable Lie algebra such that its derived ideal $\G^1$ is 2-dimensional and $\A_\G$ is 1-dimensional. 
	      Then we can choose a suitable basis $(X_1, X_2, \ldots, X_n)$ of $\G$ such that $\G^1 = \s \{X_1, X_2\} \cong \R^2$, 
	      $\A_\G = \s \{a_{X_3}\}$ and $a_{X_i} = 0$ for all $i \geqslant 4$.
	   \end{lem}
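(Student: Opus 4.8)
The plan is to reduce the whole statement to a single observation in linear algebra, namely that the assignment $\phi\colon \G \to \E \G^1$, $\phi(X) := a_X = \ad_X|_{\G^1}$, is a linear map whose image is exactly $\A_\G$. Indeed, linearity of $\phi$ is immediate from the linearity of the adjoint representation, and since $\phi(\G)$ is therefore already a subspace of $\E \G^1$, we have $\A_\G = \s\{a_X : X \in \G\} = \I \phi$. Consequently the hypothesis $\dim \A_\G = 1$ says precisely that $\dim \I \phi = 1$, and by the rank--nullity theorem the kernel $\Ker \phi = \{X \in \G : [X, \G^1] = 0\}$ has dimension $n-1$.

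First I would record that $\G^1 \subseteq \Ker \phi$. This is exactly the content of Proposition \ref{prop3}(1): $\G^1$ is commutative, so $[X, \G^1] = 0$ for every $X \in \G^1$, i.e. $a_X = 0$. Next I would build the basis from the bottom up. Choose any basis $(X_1, X_2)$ of the two-dimensional space $\G^1$; since $\G^1 \subseteq \Ker \phi$ and $\dim \Ker \phi = n-1$, I would extend $(X_1, X_2)$ to a basis $(X_1, X_2, X_4, X_5, \ldots, X_n)$ of $\Ker \phi$ (this adjoins $n-3$ further vectors, which is vacuous when $n = 3$). Finally, because $\Ker \phi$ has codimension $1$ in $\G$, any vector $X_3 \in \G \setminus \Ker \phi$ completes this to a basis $(X_1, X_2, X_3, \ldots, X_n)$ of $\G$; such an $X_3$ exists since $\A_\G \neq 0$ forces $\phi \neq 0$.

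It then remains only to verify that this basis has the three advertised properties, each of which is immediate by construction: $\G^1 = \s\{X_1, X_2\} \cong \R^2$ by the choice of $X_1, X_2$; the vectors $X_4, \ldots, X_n$ lie in $\Ker \phi$, so $a_{X_i} = 0$ for all $i \geqslant 4$; and $a_{X_3} \neq 0$ together with $\dim \A_\G = 1$ gives $\A_\G = \s\{a_{X_3}\}$. There is no genuine obstacle here---the argument is pure linear algebra---so the only point requiring care is the order in which the basis is assembled: one must start from a basis of $\G^1$, enlarge it \emph{inside} $\Ker \phi$, and only then adjoin the single vector $X_3$ lying \emph{outside} $\Ker \phi$. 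This ordering is what simultaneously guarantees that $\s\{X_1, X_2\}$ equals the derived ideal and that every $a_{X_i}$ with $i \geqslant 4$ vanishes.
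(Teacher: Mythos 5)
Your proposal is correct and is essentially the paper's own argument recast in invariant language: the paper picks $X_3$ with $\A_\G = \s \{a_{X_3}\}$, extends $(X_1, X_2, X_3)$ arbitrarily to a basis, and replaces each added vector $Y_i$ by $X_i := Y_i - \alpha_i X_3$ (where $a_{Y_i} = \alpha_i a_{X_3}$), which is exactly the projection of $Y_i$ onto your $\Ker \phi$ along $X_3$, so the two constructions yield the same adapted basis. Your rank--nullity formulation via the flag $\G^1 \subseteq \Ker \phi \subset \G$ is a clean repackaging of the same linear algebra, and like the paper you correctly ground the key inclusion $\G^1 \subseteq \Ker \phi$ in Proposition \ref{prop3}(1).
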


	   \begin{proof}
	      Since $\G^1$ is commutative, we can choose $X_1, X_2$ such that $\G^1 = \s \{X_1, X_2\} \cong \R^2$. Moreover, we can choose 
	      $X_3 \in \G \setminus \G^1$ such that $\A_\G = \s \{a_{X_3}\}$. Then $X_1, X_2, X_3$ are linearly independent. 
	      We extend $(X_1, X_2, X_3)$ to a basis $(X_1,X_2, X_3, Y_4, \ldots, Y_n)$ of $\G$. Now there exist scalars 
	      $\alpha_i \, (i \geqslant 4)$ such that $a_{Y_i} = \alpha_ia_{X_3}$ because all $a_{Y_i} \in \A_\G$. By transformation
	      \[
	         X_i := Y_i - \alpha_iX_3, \quad i = 4, \ldots, n,
	      \]
	      we obtain a basis $(X_1,X_2, \ldots,X_n)$ of $\G$ such that $a_{X_i} = 0$ for all $i \geqslant 4$.
	   \end{proof}
	   
	   In order to fully describe the Lie structure of $\G$ it is necessary to determine not only the forms of $a_{X_3}$ but also the Lie brackets 
	   $[X_i,  X_j]$ and $[X_3, X_i] \; (i,j=4,\ldots, n)$. We now proceed by considering two cases of the operator $a_{X_3}$: 
	   non-singular case in Lemma \ref{lem2} and singular case in Lemma \ref{lem3}.

	   \begin{lem}\label{lem2}
	      Let $\G$ be an $n$-dimensional real solvable Lie algebra such that its derived ideal $\G^1$ is 2-dimensional, 
	      $\A_\G$ is 1-dimensional and $(X_1, X_2, \ldots, X_n)$ is the basis of $\G$ as in Lemma \ref{lem1}, i.e. 
	      $\G^1 = \s \{X_1, X_2\} \cong \R^2$ and $\A_\G = \s \{a_{X_3}\}$. Assume, in addition, that $a_{X_3}$ is non-singular. Then, we have
	      \begin{itemize}
	         \item If $n = 3$ then $\G$ is indecomposable, namely, $\G \cong \bar{\G}$, 
	         \item If $n > 3$ then $\G$ must be decomposable, namely, $\G \cong \bar{\G} \oplus \R^{n-3}$, 
	      \end{itemize}
	      where $\bar{\G}$ belongs to the set $\left\lbrace \G_{3,2.1(\lambda)}, \G_{3,2.2}, \G_{3,2.3(\varphi)} \, | \,\lambda \in 
	      \R \setminus \{0\}, \varphi \in (0, \pi) \right\rbrace$ of the algebras listed in the subcase 1.1 of Theorem \ref{thm1}.
	   \end{lem}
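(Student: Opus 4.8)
The plan is to use the Jacobi identity to pin down all the brackets not yet fixed by $a_{X_3}$, and then read off both the decomposition and the classifying matrix. With the basis supplied by Lemma~\ref{lem1} we have $\G^1 = \s\{X_1, X_2\}$ commutative, $[X_3, \cdot]|_{\G^1} = a_{X_3}$, and $a_{X_i} = 0$ for $i \geqslant 4$, so the only brackets still undetermined are $[X_3, X_j]$ and $[X_i, X_j]$ for $i, j \geqslant 4$, all of which lie in $\G^1$. First I would apply the Jacobi identity to the triple $(X_3, X_i, X_j)$ with $i, j \geqslant 4$. Since $a_{X_i} = a_{X_j} = 0$, two of the three cyclic terms vanish and the identity collapses to $a_{X_3}([X_i, X_j]) = 0$; because $a_{X_3}$ is non-singular this forces $[X_i, X_j] = 0$ for all $i, j \geqslant 4$.

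Next I would remove the brackets $[X_3, X_j]$ for $j \geqslant 4$ by a change of basis inside $\G^1$. As $a_{X_3}$ is non-singular it maps $\G^1$ onto $\G^1$, so for each $j$ there is $W_j \in \G^1$ with $a_{X_3}(W_j) = [X_3, W_j] = [X_3, X_j]$. Setting $X_j' := X_j - W_j$ gives $[X_3, X_j'] = 0$ while leaving $X_1, X_2, X_3$ (hence $a_{X_3}$ and the subalgebra $\bar{\G} := \s\{X_1, X_2, X_3\}$) untouched; since $W_j \in \G^1$ and $a_{X_j} = 0$, one checks that the relations $[X_i', X_j'] = 0$ persist. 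After this substitution $\s\{X_4', \ldots, X_n'\}$ is a central abelian ideal commuting with everything, $\bar{\G}$ is a complementary ideal, and the two intersect trivially, so $\G \cong \bar{\G} \oplus \R^{n-3}$ when $n > 3$ and $\G = \bar{\G}$ when $n = 3$. Indecomposability in the case $n = 3$ then follows because any nontrivial splitting of a $3$-dimensional algebra would be of type $1 + 2$, forcing $\dim \G^1 \leqslant 1$ and contradicting the hypothesis.

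It remains to identify $\bar{\G}$, whose entire Lie structure is encoded in the single operator $A := a_{X_3} \in \GL_2(\R)$. I would show that two such $3$-dimensional algebras, with operators $A$ and $A'$, are isomorphic precisely when $A \sim_p A'$: an isomorphism $\phi$ must preserve $\G^1$, acting there by some $P \in \GL_2(\R)$ and sending $X_3 \mapsto cX_3 + v$ with $c \neq 0$ and $v \in \G^1$; comparing $\phi([X_3, Z])$ computed in the two possible orders yields $PA = cA'P$, i.e. $A' = c^{-1}PAP^{-1}$, and conversely every proportional similarity is realized this way. Finally, applying the real canonical (Jordan) form and then normalizing by a nonzero scalar, every non-singular $A$ is proportionally similar to exactly one of $\di(1, \lambda)$ with $\lambda \in \R\setminus\{0\}$, the Jordan block $\begin{bmatrix} 1 & 1 \\ 0 & 1 \end{bmatrix}$, or the rotation--scaling matrix with $\varphi \in (0, \pi)$; that is, $\bar{\G}$ is isomorphic to one of $\G_{3,2.1(\lambda)}$, $\G_{3,2.2}$, $\G_{3,2.3(\varphi)}$.

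I expect the main obstacle to be bookkeeping rather than conceptual depth: verifying that after the substitution $X_j \mapsto X_j - W_j$ all the previously established vanishing relations survive simultaneously, and normalizing the real canonical forms under scaling so that the complex-eigenvalue case lands cleanly in the range $\varphi \in (0, \pi)$ (using that scaling by $-1$ identifies the rotation angle $\varphi$ with $\varphi + \pi$, while conjugation by $\di(1, -1)$ identifies $\varphi$ with $-\varphi$). The non-singularity of $a_{X_3}$ is the crucial hypothesis throughout, being used both for surjectivity, to absorb the brackets $[X_3, X_j]$, and for injectivity, to force $[X_i, X_j] = 0$.
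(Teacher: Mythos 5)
Your proof is correct and follows essentially the same route as the paper: the Jacobi identity on triples $(X_3, X_i, X_j)$ together with non-singularity forces $[X_i, X_j] = 0$, surjectivity of $a_{X_3}$ absorbs the brackets $[X_3, X_j]$ into a change of basis fixing $\bar{\G}$, and the resulting $\bar{\G} \oplus \R^{n-3}$ is classified via the proportional-similarity normal forms in $\GL_2(\R)$. The only (minor) difference is that you prove the criterion $\bar{\G}_A \cong \bar{\G}_B \Leftrightarrow A \sim_{p} B$ directly and add the easy indecomposability check for $n = 3$, where the paper cites \cite[Theorem 4.5]{VHTHT16} and asserts indecomposability without comment.
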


	   \begin{proof}
	      First of all, assume that $n > 3$. Then we set
	      \[
	         \begin{array}{l l l}
	            [X_3, X_k] := y_kX_1 + z_kX_2, && k \geqslant 4, \\
	            \left[X_i, X_j\right] := y_{ij}X_1 + z_{ij}X_2, && 4 \leqslant i < j \leqslant n.
	         \end{array}
	      \]
	      Now by using the Jacobi identity for triples $(X_3, X_i, X_j)$ we get
	      \begin{equation}\label{eq-lem2}
	         y_{ij}a_{X_3}(X_1) + z_{ij}a_{X_3}(X_2) = 0.
	      \end{equation}
	      By the non-singularity, $a_{X_3}$ is an isomorphism. In particular, $a_{X_3}(X_1)$ and $a_{X_3}(X_2)$ are linearly independent. 
	      Therefore, all $y_{ij} = z_{ij} = 0$. That means $[X_i,X_j] = 0$ for $4 \leqslant i < j \leqslant n$. Next, by setting
	      \[
	         X'_k := X_k - y'_{k}X_1 - z'_{k}X_2 \quad \text{where} \quad 
	         \begin{bmatrix} y'_k \\ z'_k \end{bmatrix} = a_{X_3}^{-1} \begin{bmatrix} y_k \\ z_k \end{bmatrix}, \, k = 4, \ldots, n,
	      \]
	      we get $[X_3, X'_k] = 0$ for all $k = 4, \ldots, n$. Therefore we can assume, without loss of generality, that 
	      $[X_3, X_k] = 0$ for all $k = 4, \ldots, n$. 
	      
	      All above arguments show that
	      \[
	         \G \cong 
	         \begin{cases}
	            \bar{\G} & \quad \text{if} \quad n = 3, \\
	            \bar{\G} \oplus \R^{n-3} & \quad \text{if} \quad n > 3,
	         \end{cases}
	      \]
	      where $\bar{\G} = \s \{X_1,X_2,X_3\}$ is a 3-dimensional real Lie algebra with $\bar{\G}^1 \equiv \G^1 = \s \{X_1, X_2\}$ 
	      and the Lie structure of $\bar{\G}$ is defined by the non-singular matrix $a_{X_3} = A \in \GL_2 (\R)$.
	      
	      Besides, it follows immediately from \cite[Theorem 4.5]{VHTHT16} that two Lie algebras $\bar{\G}$ defined respectively 
	      by $A, B \in \GL_2 (\R)$ are isomorphic if and only if $A \, \sim_{p} \, B$ (the proportional similar relation). In other words, 
	      there exists $c \in \R \setminus \{0\}$ such that the Jordan canonical forms of $cA$ and $B$ coincide. Therefore, the 
	      classification of $\bar{\G}$, in this case, is reduced to find out $\GL_2 (\R)/\sim_{p}$. Recall that the Jordan canonical 
	      classification of $\GL_2(\R)$ is given as follows
	      \[
	         \begin{bmatrix} \lambda_1 & 0 \\ 0 & \lambda_2 \end{bmatrix} (\lambda_1, \lambda_2 \in \R \setminus \{0\}); \quad
	         \begin{bmatrix} \lambda & 1 \\ 0 & \lambda \end{bmatrix} (\lambda \in \R \setminus \{0\}); \quad
	         \begin{bmatrix} a & -b \\ b & a \end{bmatrix} (a, b \in \R, b > 0).
	      \]
	      It is easily seen that 
	      \begin{itemize}
	         \item[$\bullet$]\label{case1-lem2} $\begin{bmatrix} \lambda_1 & 0 \\ 0 & \lambda_2 \end{bmatrix} \sim_{p} 
	         \begin{bmatrix} 1 & 0 \\ 0 & \lambda \end{bmatrix}$ where $\lambda = \frac{\lambda_2}{\lambda_1} \neq 0$.
	         \item [$\bullet$]\label{case2-lem2} $\begin{bmatrix} \lambda & 1 \\ 0 & \lambda \end{bmatrix} \sim_{p} 
	         \begin{bmatrix} 1 & 1 \\ 0 & 1 \end{bmatrix}$.
	         \item [$\bullet$]\label{case3-lem2} $\begin{bmatrix} a &  -b \\  b & a \end{bmatrix} \sim_{p} 
	         \begin{bmatrix} \cos \varphi &-\sin \varphi \\ \sin \varphi & \cos \varphi \end{bmatrix}$ where 
	         $\varphi = \arccos \frac{a}{\sqrt{a^2 + b^2}} \in (0, \pi)$.
	      \end{itemize}
	      Therefore, we get the classification of $\mathrm{GL}_2 (\R)$ by the proportional similar relation as follows
	      \[
	         \begin{bmatrix} 1 & 0 \\ 0 & \lambda \end{bmatrix} (\lambda \in \R \setminus \{0\}); \quad
	         \begin{bmatrix} 1 & 1 \\ 0 & 1 \end{bmatrix}; \quad
	         \begin{bmatrix} \cos \varphi &-\sin \varphi \\ \sin \varphi & \cos \varphi \end{bmatrix} (\varphi \in (0, \pi)).
	      \]
	      As an immediate consequence of \cite[Theorem 4.5]{VHTHT16}, we have 
	      \[
	         \bar{\G} \in \left\lbrace \G_{3,2.1(\lambda)}, \G_{3,2.2}, \G_{3,2.3(\varphi)} \, | \, \lambda \in \R \setminus \{0\}, \varphi \in (0, \pi)\right\rbrace.
	     \]
	     We emphasize that these algebras are exactly ones which are listed in {\bf subcase 1.1} of Theorem \ref{thm1}. This means that
	     \begin{itemize}
	        \item[$\bullet$] If $n = 3$ then $\G$ is indecomposable, namely, $\boldsymbol{\G \cong \bar{\G} \in \{ \G_{3,2.1(\lambda)}, \, \G_{3,2.2}, \, \G_{3,2.3(\varphi)}\}}$.
	        \item[$\bullet$] If $n > 3$ then $\G$ is decomposable, namely, $\boldsymbol{\G \cong \bar{\G} \oplus \R^{n-3}}$.
	     \end{itemize}
	     The proof of Lemma \ref{lem2} is complete.
	   \end{proof}

	   \begin{lem}\label{lem3}
	      Assume that $\G$ is an $n$-dimensional real solvable Lie algebra such that its derived ideal $\G^1$ is 2-dimensional, 
	      $\A_\G$ is 1-dimensional and $(X_1, X_2, \ldots, X_n)$ is the basis of $\G$ as in Lemma \ref{lem1}, i.e. 
	      $\G^1 = \s \{X_1, X_2\} \cong \R^2$ and $\A_\G = \s \{a_{X_3}\}$. Assume, in addition, that $a_{X_3}$ is singular. Then 
	      $n \geqslant 4$ and the following assertions hold.
	      \begin{enumerate}
	         \item If $n = 4$ then $\G$ is isomorphic to one and only one from the set $\{\G_{4,2.1}, \, \G_{4,2.2} \}$ of Lie algebras 
	         listed in subcase 1.2 of Theorem \ref{thm1}.
	         \item If $n > 4$ and $\G$ is indecomposable, then \,$\G$ is isomorphic to one and only one from the set 
	         $\{\G_{5 + 2k,2}\, | \, n = 2k + 5, \, k \geqslant 0\} \cup \{\G_{6 + 2k,2.1}, \, \G_{6 + 2k,2.2}\, | \, n = 2k + 6, \, k \geqslant 0\}$ of Lie algebras 
	         listed in subcases 1.3 and 1.4 of Theorem \ref{thm1}.
	         \item If $n > 4$ and $\G$ is decomposable, then \,$\G \cong \af (\R) \oplus \h_{2m+1} \, (n = 2m + 3, \, m \geqslant 1)$ or $\G$ 
	         is a trivial extension of $\mathcal{H}$ by a commutative Lie algebra, where 
	         \[
	            \mathcal{H} \in \{\af (\R) \oplus \h_{2m+1}, \, \G_{4,2.1}, \, \G_{4,2.2}, \, \G_{5 + 2k,2}, \, \G_{6 + 2k,2.1}, \, \G_{6 + 2k,2.2} \, | \, m \geqslant 1\}.
	         \]
	      \end{enumerate}
	   \end{lem}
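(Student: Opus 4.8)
The plan is to translate the remaining freedom in the Lie brackets into the normalization of an alternating form attached to $a_{X_3}$, and then to separate the resulting algebras by decomposability.

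First I would record the rigid facts. As $\A_\G = \s\{a_{X_3}\}$ is one-dimensional, $a_{X_3}\neq 0$; being singular it has rank exactly one, so $\I a_{X_3}$ and $\Ker a_{X_3}$ are both lines in $\G^1$. If $n=3$ then $\G^1 = \s\{[X_3,X_1],[X_3,X_2]\} = \I a_{X_3}$ would be one-dimensional, contradicting $\dim\G^1 = 2$; hence $n\geq 4$. Over $\R$ there are exactly two proportional-similar classes of singular rank-one operators on $\G^1 \cong \R^2$, the diagonalizable type $\mathrm{diag}(1,0)$ (with $\I a_{X_3}$ transverse to $\Ker a_{X_3}$) and the nilpotent type (with $\I a_{X_3}=\Ker a_{X_3}$); I would run the two in parallel, choosing the basis $(X_1,X_2)$ of $\G^1$ so that $a_{X_3}$ is in the corresponding normal form. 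For $n=4$ the only free bracket is $[X_3,X_4]$, and $\dim\G^1 = 2$ forces $[X_3,X_4]\notin \I a_{X_3}$; a shift of $X_4$ by $\G^1$ and a rescaling then produce precisely $\G_{4,2.1}$ (diagonalizable) and $\G_{4,2.2}$ (nilpotent), which settles assertion (1).

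For $n>4$ I would set $[X_3,X_k] = y_kX_1+z_kX_2$ and $[X_i,X_j] = y_{ij}X_1+z_{ij}X_2$ for $4\leq i<j\leq n$. Exactly as in Lemma \ref{lem2}, the Jacobi identity on the triples $(X_3,X_i,X_j)$ collapses to $a_{X_3}([X_i,X_j]) = 0$, i.e. $[X_i,X_j]\in\Ker a_{X_3}$, while the triples containing some $X_1,X_2$ give nothing because $a_{X_i}=0$ for $i\geq 4$ and $\G^1$ is commutative. Shifting each $X_k$ by an element of $\G^1$ clears the $\I a_{X_3}$-component of $[X_3,X_k]$. In the diagonalizable case $\I a_{X_3}$ and $\Ker a_{X_3}$ are complementary, so after this clearing every bracket among $X_3,\ldots,X_n$ lands in the single line $\Ker a_{X_3}=\s\{X_2\}$, and all residual data is one $\R$-valued alternating form $\omega$ on $V:=\s\{X_3,\ldots,X_n\}$. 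In the nilpotent case $\I a_{X_3}=\Ker a_{X_3}=\s\{X_2\}$, so the cleared $[X_3,X_k]=y_kX_1$ point transverse to $\Ker a_{X_3}$; here the residual data is the functional $k\mapsto y_k$ together with the $\s\{X_2\}$-valued alternating form on $\s\{X_4,\ldots,X_n\}$, the two being linked by the length-three Jordan chain $X_4\mapsto X_1\mapsto X_2$ of $a_{X_3}$.

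The core of the argument is to normalize this data using only the transformations that preserve $a_{X_3}$ and the conditions $a_{X_i}=0$ ($i\geq 4$): I may rescale $X_3$, add to $X_3$ any combination of $X_4,\ldots,X_n$, and act by $\GL$ on $\s\{X_4,\ldots,X_n\}$, but I may not add $X_3$ to the other vectors. Inside this restricted symplectic group I would bring $\omega$ to a normal form and distinguish whether the vector carrying the action ($X_3$ in the diagonalizable case, or its chain-partner $X_4$, normalized by $y_4=1$ so that $[X_3,X_4]=X_1$, in the nilpotent case) is $\omega$-orthogonal to all the remaining vectors or pairs with exactly one of them; the symplectic complement then splits off as Heisenberg pairs. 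Reading off the resulting bracket tables produces the candidate normal forms $\af(\R)\oplus\h_{2m+1}$, $\G_{5+2k,2}$, $\G_{6+2k,2.1}$ and $\G_{6+2k,2.2}$, each possibly extended by a central $\R^{s}$.

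The step I expect to be the main obstacle is the decomposability dichotomy separating assertions (2) and (3), since the restricted normal form does not display it. The delicate point is in the diagonalizable case: when the partner $X_4$ of $X_3$ is forced to pair not only with $X_3$ but also with a vector lying in the radical of $\omega$ restricted to $\s\{X_5,\ldots,X_n\}$, the bracket table looks indecomposable, yet a nonobvious change of basis — using the eigenvalue-one action of $a_{X_3}$ to absorb the offending vector into a Heisenberg pair — shows the algebra is in fact isomorphic to $\af(\R)\oplus\h_{2m+1}\oplus\R^{s}$. To settle this cleanly I would not read decomposability off the brackets but compute isomorphism invariants: nilpotency, which isolates the nilpotent families $\G_{5+2k,2}$ and $\G_{6+2k,2.2}$ from the non-nilpotent ones; the dimension of the center, which separates the indecomposable $\G_{6+2k,2.1}$ from $\af(\R)\oplus\h_{2m+1}\oplus\R^{s}$; and the upper central series dimensions. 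Using these I would prove that exactly the listed algebras are indecomposable, while every other normal form is a trivial extension by a commutative ideal of one of $\af(\R)\oplus\h_{2m+1}$, $\G_{4,2.1}$, $\G_{4,2.2}$, $\G_{5+2k,2}$, $\G_{6+2k,2.1}$, $\G_{6+2k,2.2}$.
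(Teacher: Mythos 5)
Your proposal is correct, and up through the normal forms of $a_{X_3}$ (the diagonalizable type $\di (1,0)$ versus the nilpotent type), the Jacobi constraint $[X_i,X_j]\in\Ker a_{X_3}$, and the clearing shifts $X_k\mapsto X_k - v$ with $v\in\G^1$, it coincides with the paper's proof of Lemma \ref{lem3}; where it genuinely diverges is in how the residual skew-symmetric data is normalized. The paper proceeds by an explicit iterative construction: it successively renumbers and rescales to build a single chain $[X_3,X_4]=[X_4,X_5]=\cdots=[X_{3+k},X_{4+k}]=X_2$ and then splits it into disjoint pairs by the telescoping substitutions $X'_{4+2i}=X_{4+2i}+X_{4+2i+2}$ (resp.\ $X'_{3+2i}=X_{3+2i}+X_{3+2i+2}$) according to the parity of $k$; whether $X_3$ (resp.\ $X_4$) survives inside a pair is exactly what separates $\G_{6+2k,2.1}$ from $\af(\R)\oplus\h_{2m+1}$ (resp.\ $\G_{5+2k,2}$ from $\G_{6+2k,2.2}$). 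You instead classify the pair (distinguished vector, alternating form $\omega$) under the restricted transformation group directly: beyond the rank of $\omega$, the only invariant is whether the functional $\omega(X_4,\cdot)$ is nonzero on the radical of $\omega$ restricted to $\s\{X_5,\ldots,X_n\}$ (since adding elements of that span to $X_4$ changes the functional precisely by the image of $\omega$), and your single absorption move $X_3\mapsto X_3+X_5$ replaces the paper's parity case analysis. This buys two things: your Darboux-type normalization of the complement handles configurations whose nonzero brackets form several disjoint components uniformly, whereas the paper's ``repeat the procedure'' step implicitly organizes everything into one chain; and your invariants (nilpotency, dimension of the center, upper central series dimensions) actually prove the ``one and only one'' claims and the decomposability dichotomy, which the paper asserts essentially by inspection of the normal forms. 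Two small corrections to your sketch: the absorption move is legitimate because $a_{X_5}=0$, so $[X_3+X_5,X_1]=X_1$ and the normal form of $a_{X_3}$ are preserved --- it is not the eigenvalue-one action of $a_{X_3}$ that does the work; and in the diagonalizable case you may not rescale $X_3$ alone, since that deforms $a_{X_3}=\di(1,0)$ --- the same normalizing effect must be obtained by rescaling $X_2$, which is harmless because the stabilizer of $a_{X_3}$ in $\GL(\G^1)$ consists of the diagonal matrices.
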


	   \begin{proof}
	      In view of Lemma \ref{lem1}, $\G$ has a basis $(X_1, X_2, \ldots, X_n)$ such that $\G^1 = \s \{X_1,X_2\} \cong \R^2$ and 
	      $\A_\G = \s \{a_{X_3}\}$. It follows from the singularity that $\det(a_{X_3}) = 0$. Moreover, $n \geqslant 4$ because if it was not so, 
	      i.e. $n = 3$, then $\G^1 = \text{Im}(a_{X_3})$ must be 1-dimensional which conflicts with the assumption. It is clear that the characteristic 
	      polynomial of $a_{X_3} (\neq 0)$ is given as follows
	      \[
	         p(x) = x^2 - \T (a_{X_3}) x,
	      \]
	      where $\T$ is the trace of a matrix. In particular, $\lambda_1 = \T (a_{X_3})$ and $\lambda_2 = 0$ are eigenvalues of $a_{X_3}$. 
	      Therefore, we have the following mutually-exclusive possibilities.
	       \begin{itemize}
	          \item $\lambda_1 = \T (a_{X_3}) \neq 0 = \lambda_2$. In this case, $a_{X_3}$ is diagonalizable. We can choose a suitable basis 
	          $(X'_1, X'_2)$ of $\G^1$ to get $a_{X_3} = \begin{bmatrix} \lambda_1 & 0 \\ 0 & 0 \end{bmatrix}$. By setting 
	          $X'_3 = \frac{1}{\lambda_1}X_3$, we get $a_{X'_3} = \begin{bmatrix} 1 & 0 \\ 0 & 0 \end{bmatrix}$.
	          \item $\lambda_1 = \T (a_{X_3}) = 0 = \lambda_2$, i.e. zero is the unique eigenvalue of $a_{X_3}$. Since $a_{X_3} \neq 0$ we can 
	          convert it to the Jordan canonical form $\begin{bmatrix} 0 & 1 \\ 0 & 0 \end{bmatrix}$ in a suitable basis $(X'_1, X'_2)$ of $\G^1$.
	       \end{itemize}
	       In summary, without loss of generality, we can always assume that
	      \[
	         a_{X_3} = \begin{bmatrix} 1 & 0 \\ 0 & 0 \end{bmatrix} \quad \text{or} \quad a_{X_3} = \begin{bmatrix} 0 & 1 \\ 0 & 0 \end{bmatrix}.
	      \]
	      
	      Recall that we have set
	      \[
	         \begin{array}{l l l}
	            [X_3, X_k] := y_kX_1 + z_kX_2, && k \geqslant 4, \\
	            \left[X_i, X_j\right] := y_{ij}X_1 + z_{ij}X_2, && 4 \leqslant i<j \leqslant n.
	         \end{array}
	      \]
	      Besides, it follows from the equation \eqref{eq-lem2} that
	      \begin{itemize}
	         \item If $a_{X_3} = \begin{bmatrix} 1 & 0 \\ 0 & 0 \end{bmatrix}$ then $y_{ij} = 0$, i.e. $[X_i, X_j] = z_{ij}X_2$ for $4 \leqslant i < j \leqslant n$.
	         \item If $a_{X_3} = \begin{bmatrix} 0 & 1 \\ 0 & 0 \end{bmatrix}$ then $z_{ij}=0$, i.e. $[X_i, X_j] = y_{ij}X_1$ for $4 \leqslant i < j \leqslant n$.
	      \end{itemize}
	      Thus we have two mutually-exclusive possibilities as follows: $\G = \s \{X_1, X_2, \ldots, X_n\}$ whose Lie structure is one 
	      of two following systems
	      \[
	         \begin{array}{l l l l l}
	            \begin{array}{l r c r l}
	               [X_3, X_1] = & X_1, \\
	               \left[X_3, X_k\right] = & y_kX_1 & + & z_kX_2, & k \geqslant 4, \\
	               \left[X_i, X_j\right] = & & & z_{ij}X_2, & 4 \leqslant i < j \leqslant n, \\
	            \end{array} & & \text{or} & &
	            \begin{array}{l r c r l}
	               [X_3, X_2] = & X_1, \\
	               \left[X_3, X_k\right] = & y_kX_1 & + & z_kX_2, & k \geqslant 4, \\
	               \left[X_i, X_j\right] = & y_{ij}X_1, &&& 4 \leqslant i < j \leqslant n.
	            \end{array}
	         \end{array}
	      \]
	      
	      \begin{enumerate}[\bf 1]
	         \item {\bf The first case of Lemma \ref{lem3}}
	         
	         Now we consider the first case in which we have
	        \[
	         \begin{array}{l r c r l l}
	         [X_3, X_1] = & X_1, \\
	         \left[X_3, X_k\right] = & y_kX_1 & + & z_kX_2, && k \geqslant 4, \\
	         \left[X_i, X_j\right] = & & & z_{ij}X_2, && 4 \leqslant i < j \leqslant n. \\
	         \end{array}	         
	       \]
	       By using the following change of basis
	         \[
	            X'_i := X_i - y_iX_1, \quad i = 4, \ldots, n,
	         \]
	         we can reduce the Lie structure of $\G$ to
	         \[
	            \begin{array}{l r r l l}
	               [X_3, X_1] = & X_1, \\
	               \left[X_3, X_k\right] = & & z_kX_2, && k \geqslant 4, \\
	               \left[X_i, X_j\right] = & & z_{ij}X_2, && 4 \leqslant i < j \leqslant n.
	            \end{array}
	         \]
	         On the other hand, $\G^1 = \s \{X_1, X_2\}$ implies that all of $z_{ij}$ and $z_k$ are simultaneously non-vanished. 
	         Hence, there are exactly two mutually-exclusive subcases as follows.
	         
	         \begin{enumerate}[\bf A]
	            \item\label{case1A-lem3} {\bf The first subcase of Case 1 in Lemma \ref{lem3}: All $z_k = 0$, $k \geqslant 4$.} 
	            
	            Here, we have $n \geqslant 5$. In this subcase, it is easy to see that $\G$ must be decomposable. In fact 
	            $\G = \s \{X_1, X_3\} \oplus \s \{X_2, X_4, X_5, \ldots, X_n\}$. Moreover, we have that
	            \begin{itemize}
	               \item $\s \{X_1, X_3\}$ with $[X_3, X_1] = X_1$, i.e. $\s \{X_1, X_3\} \cong \af (\R)$.
	               \item $\s \{X_2, X_4, X_5, \ldots, X_n\}$ with $[X_i, X_j] = z_{ij}X_2 \, (4 \leqslant i < j \leqslant n)$ and there exists $z_{ij} \neq 0$. 
	               It is clear that this Lie subalgebra belongs to $\mathrm{Lie} \left(n, 1\right)$ which consists and only consists of 
	               the real affine Lie algebra $\af (\R)$ or the $(2m+1)$-dimensional real Heisenberg Lie algebra $\h_{2m+1}$ 
	               and their trivial extensions by a commutative Lie algebra (see \cite{VHTHT16}). Because $\s \{X_2, X_4, X_5, \ldots, X_n\}$ 
	               has at least one non-trivial Lie bracket of the form $[X_i, X_j] = z_{ij}X_2$, this Lie subalgebra must be the real Heisenberg 
	               Lie algebra or an its trivial extension.  
	            \end{itemize}
	            To summarize, we get the algebras listed in {\bf subcase 2.2} of Theorem \ref{thm1}. Namely, we have
	            \begin{itemize}
	               \item $\boldsymbol{\G \cong \af (\R) \oplus \mathfrak{h}_{2m+1}}$ \qquad \qquad \qquad \quad if \, $n = 2m + 3, \, m \geqslant 1$.
	               \item $\boldsymbol{\G \cong \af (\R) \oplus \mathfrak{h}_{2m+1} \oplus \R^{n-2m-3}}$ \quad if \, $n > 2m + 3, \, m \geqslant 1$.
	            \end{itemize}	             

	            \item\label{case1B-lem3} {\bf The second subcase of Case 1 in Lemma \ref{lem3}: There exists $z_k \neq 0$, $k \geqslant 4$.} 
	            
	            Here, we have $n \geqslant 4$. We will renumber $X_4, X_5, \ldots, X_n$, if necessary, to get $z_4 \neq 0$. 
	            Now, we use the following change of basis
	            \[
	               X'_4 := \frac{1}{z_4}X_4, \quad X'_k :=X_k - \frac{z_k}{z_4}X_4 \; (k \geqslant 5).
	            \]
	            That reduces the Lie structure of $\G$ to
	            \[
	               \begin{array}{l r r l l}
	                  [X_3, X_1] = & X_1, \\
	                  \left[X_3, X_4\right] = & & X_2, && \\
	                  \left[X_i, X_j\right] = & & z_{ij}X_2, && 4 \leqslant i < j \leqslant n.
	               \end{array}
	            \]
	            The next treatment procedure of the proof is as follows.
	            \begin{enumerate}
	               \item[(1B.1)] If all $z_{ij} = 0$ then $\G \cong \R^{n-4} \oplus \s \{X_1, X_2, X_3, X_4\}$ with $[X_3, X_1] = X_1$, 
	               $[X_3, X_4] = X_2$. This means $\boldsymbol{\G \cong \G_{4,2.1}}$ which is listed in {\bf subcase 1.2} of Theorem \ref{thm1}.
	               \item[(1B.2)] If there exists $z_{ij} \neq 0$ then $n \geqslant 5$. 
	               \begin{itemize}
	                  \item[$\bullet$] First, we consider $[X_4, X_k ] = z_{4k}X_2$ for $k \geqslant 5$. If there exists $z_{4k} \neq 0$ then by 
	                  renumbering $X_5, \ldots, X_n$, if necessary, we get $z_{45} \neq 0$. After that, we use the change of basis as follows
	                  \[
	                     X'_5 := \frac{1}{z_{45}}X_5,  \quad X'_k := X_k - \frac{z_{4k}}{z_{45}}X_5 \; (k \geqslant 6),
	                  \]
	                  then we have $[X_4, X_5] = X_2$ and $[X_4, X_k] = 0$ for all $k \geqslant 6$.
	                  \item[$\bullet$] Next, we consider $[X_5, X_k] = z_{5k}X_2$ for $k \geqslant 6$. If there exist $z_{5k} \neq 0$ then we 
	                  renumber $X_6, \ldots, X_n$ to get $z_{56} \neq 0$. By the same way as above, we will convert $z_{56}$ to 1 
	                  while $z_{5k} = 0$ for all $k \geqslant 7$.
	               \end{itemize}
	               We repeat this procedure as far as possible. It is obvious that this procedure must terminate because $\G$ is finite-dimensional. 
	               In other words, we can find out one integer $k$ such that
	               \begin{equation}\label{eq1-lem3}
	                  [X_3, X_4] = [X_4, X_5] = [X_5, X_6] = \cdots = [X_{3+k}, X_{4+k}] = X_2, \quad k \geqslant 1.
	               \end{equation}
	               Now we will refine the formula \eqref{eq1-lem3} according to the parity of $k$.
	               \begin{itemize}
	                  \item[$\bullet$] If $k =2l$ then \eqref{eq1-lem3} becomes to
	                  \[
	                     [X_3, X_4] = [X_4, X_5] = [X_5, X_6] = \cdots = [X_{3+2l}, X_{4+2l}] = X_2, \quad l \geqslant 1.
	                  \]
	                  Now we make $l$ changes of basis step by step as follows
	                  \[
	                     X'_{4+2i} = X_{4+2i} + X_{4+2i+2}, \quad  i = 0, \ldots, l-1.
	                  \]
	                  It is not hard to check that they convert \eqref{eq1-lem3} to the following formula
	                  \[
	                     [X_3, X_4] = [X_5, X_6] = \cdots = [X_{3+2l}, X_{4+2l}] = X_2, \quad l \geqslant 1.
	                  \]
	                   
	                  \item[$\bullet$] If $k = 2h+1$ then \eqref{eq1-lem3} becomes to
	                  \[
	                     [X_3, X_4] = [X_4, X_5] = [X_5, X_6] = \cdots = [X_{4+2h}, X_{5+2h}] = X_2, \quad h \geqslant 0.
	                  \]
	                  Now we make $h + 1$ changes of basis step by step as follows
	                  \[
	                     X'_{3+2i} = X_{3+2i} + X_{3+2i+2}, \quad  i = 0, \ldots, h.
	                  \]
	                  They also convert \eqref{eq1-lem3} to the following formula
	                  \[
	                     [X_4, X_5] = [X_6, X_7] = \cdots = [X_{4+2h}, X_{5+2h}] = X_2, \quad h \geqslant 0.
	                  \]
	               \end{itemize}
	               To summarize, we have the following Lie algebras
	               \begin{itemize}
	                  \item[$\bullet$] $\G \cong \R^{n-2k-6} \oplus \s \{X_1, X_2, \ldots, X_{6+2k}\}$ with $[X_3, X_1] = X_1$ and
	                  \[
	                     [X_3, X_4] = [X_5, X_6] = \cdots = [X_{5+2k}, X_{6+2k}] = X_2, \quad k \geqslant 0, n \geqslant 2k + 6.
	                  \]
	                  This means that $\boldsymbol{\G \cong \G_{6+2k,2.1}}$ if $n=2k+6, \, k \geqslant 0$ or $\G$ is isomorphic to 
	                  {\bf a trivial extension} (by a commutative Lie algebra) of $\boldsymbol{\G_{6+2k,2.1}}$ if $n > 2k+6, \, k \geqslant 0$, 
	                  where $\boldsymbol{\G_{6+2k,2.1}}$ is the first algebra listed in {\bf subcase 1.4} of Theorem \ref{thm1}.                 
	                  \item[$\bullet$] $\G \cong \R^{n-2k-5} \oplus \s \{X_1, X_2, \ldots, X_{5+2k}\}$ with $[X_3, X_1] = X_1$ and
	                  \[
	                     [X_4, X_5] = [X_6, X_7] = \cdots = [X_{4+2k}, X_{5+2k}] = X_2, \quad k \geqslant 0, n \geqslant 2k + 5,
	                  \]
	                  This means that
	                  \[
	                     \G \cong \s \{X_1, X_3\} \oplus \s \{X_2, X_4, \ldots, X_n\},
	                  \]
	                  where $\s\{X_1, X_3\} \cong \af (\R)$ and $\s\{X_2, X_4, \ldots, X_n\}$ is the Heisenberg Lie algebras or an its trivial extension 
	                  by a commutative Lie algebra. Therefore, $\boldsymbol{\G \cong \mathcal{H}}$ if $n = 2m+3 = 2k+5, m = k+1 \geqslant 1$ 
	                  or $\G$ is isomorphic to {\bf a trivial extension} (by a commutative Lie algebra) of $\boldsymbol{\mathcal{H}}$, if $n > 2m +3$, 
	                  where $\boldsymbol{\mathcal{H} = \af (\R) \oplus \h_{2m+1}}$\, ($m \geqslant 1$) is the first algebra listed in {\bf subcase 2.2} 
	                  of Theorem \ref{thm1}.
	                \end{itemize}
	            \end{enumerate}
	         \end{enumerate}

	         \item\label{case2-lem3} {\bf The second case of Lemma \ref{lem3}}
	         
	         Now we consider the second case in which we have
	         \[
	            \begin{array}{l r c r l l}
	               [X_3, X_2] = & X_1, \\
	               \left[X_3, X_k\right] = & y_kX_1 & + & z_kX_2, && k \geqslant 4, \\
	               \left[X_i, X_j\right] = & y_{ij}X_1, &&&& 4 \leqslant i < j \leqslant n.
	            \end{array}
	         \]	         
	         First of all, we use the change of basis as follows
	         \[
	            X'_1 := X_2, \quad X'_2 := X_1, \quad X'_k := X_k - y_kX_2 \, (k \geqslant 4).
	         \]
	        This reduces the Lie structure of $\G$ to
	         \[
	            \begin{array}{l r r l l l}
	               [X_3, X_1] = & & X_2, \\
	               \left[X_3, X_k\right] = & z_kX_1, &&&& k \geqslant 4, \\
	               \left[X_i, X_j\right] = && y_{ij}X_2, &&& 4 \leqslant i < j \leqslant n.
	            \end{array}
	         \]
	         On the other hand, because $\G^1 = \s \{X_1, X_2\}$ there exists $z_k \neq 0$. Without loss of generality, we may assume 
	         that $z_4 \neq 0$. Then by transformation
	         \[
	            X'_4 := \frac{1}{z_4}X_4, \quad X'_k := X_k - \frac{z_k}{z_4}X_4 \; (k \geqslant 5),
	         \]
	         it reduces the Lie structure of $\G$ to
	         \[
	            \begin{array}{l r r l l l}
	               [X_3, X_1] = & & X_2, \\
	               \left[X_3, X_4 \right] = & X_1, \\
	               \left[X_i, X_j \right] = & & y_{ij}X_2, && 4 \leqslant i < j \leqslant n.
	            \end{array}
	         \]
	         Now the next treatment procedure to deal with last Lie brackets is absolutely similar to subcase \ref{case1B-lem3}. 
	         More precisely, we also have two following subcases.
	         \begin{enumerate}[\bf A]

	         	\item\label{case2A-lem3} {\bf The first subcase of Case 2 in Lemma \ref{lem3}: All $y_{ij} = 0$, $4 \leqslant i < j \leqslant n$.}
	         	
	         	Here we obtain
	         	\[
	         	   \G \cong \R^{n-4} \oplus \s \{X_1, X_2, X_3, X_4\}, [X_3, X_1] = X_2, [X_3, X_4] = X_1, n \geqslant 4.
	         	\]
	         	By changing the role of $X_1$ and $X_2$ we convert this subcase to the one as follows 
	         	\[
	         	   \G \cong \R^{n-4} \oplus \s \{X_1, X_2, X_3, X_4\}, [X_3, X_2] = X_1, [X_3, X_4] = X_2, n \geqslant 4.
	         	\]
	         	In fact, we get
	         	\begin{itemize}
	         	   \item[$\bullet$] $\boldsymbol{\G \cong \G_{4,2.2}}$ \qquad \qquad \quad  if \, $n = 4,$
	         	   \item[$\bullet$] $\boldsymbol{\G \cong \G_{4,2.2} \oplus \R^{n-4}}$  \,\quad if \, $n > 4$,
	         	\end{itemize}
	         	where $\boldsymbol{\G_{4,2.2}}$ is the second algebra listed in {\bf the subcase 1.2} of Theorem \ref{thm1}. We also emphasize 
	         	that in this subcase, $\G$ is 3-step nilpotent.

	         	\item\label{case2B-lem3} {\bf The second subcase of Case 2 in Lemma \ref{lem3}: There exists $y_{ij} \neq 0$, 
	         	$4 \leqslant i < j \leqslant n$.}
	         	
	         	Here $n \geqslant 5$ and the last Lie brackets become to
	         	\begin{equation}\label{eq2-lem3} 
	         	   [X_4, X_5] = [X_5, X_6] = \cdots = [X_{4+k}, X_{5+k}] = X_2, \quad k \geqslant 0.
	         	\end{equation}
	         	We also refine \eqref{eq2-lem3} according to the parity of $k$ to obtain two Lie algebras as follows
	         	\begin{itemize}
	         	   \item[$\bullet$] $\G \cong \R^{n-2k-5} \oplus \s \{X_1, X_2, \ldots, X_{5+2k}\}$ with $[X_3, X_4] = X_1$ and
	         	   \[
	         	      [X_3, X_1] = [X_4, X_5] = \cdots = [X_{4+2k}, X_{5+2k}] = X_2, \quad k \geqslant 0, n \geqslant 2k + 5.
	         	   \]
	         	   In fact, we get here $\boldsymbol{\G \cong \G_{5+2k,2}}$ when $n=2k+5 \, (k \geqslant 0)$ or $\G$ is isomorphic to one 
	         	   {\bf its trivial extension} (by a commutative Lie algebra) when $n > 2k + 5 \, (k \geqslant 0)$, where $\boldsymbol{\G_{5+2k,2}}$ 
	         	   is the algebra listed in {\bf the subcase 1.3} of Theorem \ref{thm1}.    	
	         	   \item[$\bullet$] $\G \cong \R^{n-2k-6} \oplus \s \{X_1, X_2, \ldots, X_{6+2k}\}$ with $[X_3, X_4] = X_1$ and
	         	   \[
	         	      [X_3, X_1] = [X_5, X_6] = \cdots = [X_{5+2k}, X_{6+2k}] = X_2, \quad k \geqslant 0, n \geqslant 2k + 6.
	         	   \]
	         	   Here we get $\boldsymbol{\G \cong \G_{6+2k,2.2}}$ if $n=2k+6, \, k \geqslant 0$, or $\G$ is isomorphic to 
	         	   {\bf a trivial extension} (by a commutative Lie algebra) of $\boldsymbol{\G_{6+2k,2.2}}$ if $n > 2k+6, \, k \geqslant 0$. 
	         	   Note that $\boldsymbol{\G_{6+2k,2.2}}$ is the second algebra listed in {\bf the subcase 1.4} of Theorem \ref{thm1}.	         	   
	         	\end{itemize}
	         \end{enumerate}
	         It is easy to check that all the Lie algebras in this subcase are 3-step nilpotent.
	      \end{enumerate}
	       The proof of Lemma \ref{lem3} is complete.
	   \end{proof}

	   \begin{rem}
	      So far, we have concerned ourselves only with the case $\dim \A_{\G} = 1$. By the results just obtained from Lemmas 
	      \ref{lem1}, \ref{lem2}, \ref{lem3}, most of Lie algebras listed in Theorem \ref{thm1} have appeared in the results, except for 
	      $\G_{4,2.3(\lambda)} \,(\lambda \in \R)$, $\G_{4,2.4} = \af (\C)$ in subcase 1.2, $\af (\R) \oplus \af (\R)$ in subcase 2.1 and their 
	      trivial extensions. Now we turn to the case $\dim \A_{\G} = 2$.  
	   \end{rem}

	   \begin{lem}\label{lem4}
	      Let $\G$ be an $n$-dimensional real solvable Lie algebra such that its derived ideal $\G^1$ is 2-dimensional and $\A_\G$ 
	      is 2-dimensional. Then, we can choose a suitable basis $(X_1, X_2, \ldots, X_n)$ of $\G$ such that $\G^1 = \s \{X_1, X_2\} \cong \R^2$, 
	      $\A_{\G} = \s \{a_{X_3},a_{X_4}\}$ and the following assertions hold.
	      \begin{enumerate}
	         \item $\G \cong \af (\R) \oplus \af (\R)$ or $\G \cong \af (\R) \oplus \af (\R) \oplus \R^{n-4} \, (n>4)$.
	         \item $\G \cong \G_{4,2.3(\lambda)}$ or $\G \cong \G_{4,2.3(\lambda)} \oplus  \R^{n-4} \, (n>4)$, where 
	         $\G_{4,2.3(\lambda)} \, (\lambda \in \R)$ is the Lie algebras listed in subcase 1.2 of Theorem \ref{thm1}. 
	         \item $\G \cong \af (\C)$ or $\G \cong \af (\C) \oplus \R^{n-4}$. 
	      \end{enumerate}
	   \end{lem}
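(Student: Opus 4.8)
The plan is to mirror the strategy of Lemmas \ref{lem1}--\ref{lem3}, now exploiting a single distinguished generator of the two--dimensional commutative algebra $\A_\G$. First I would normalise the basis exactly as in Lemma \ref{lem1}, but carrying two generators: choose $X_1, X_2$ with $\G^1 = \s\{X_1, X_2\} \cong \R^2$, pick $X_3, X_4$ so that $\A_\G = \s\{a_{X_3}, a_{X_4}\}$, and for each $i \geqslant 5$ write $a_{Y_i} = \alpha_i a_{X_3} + \beta_i a_{X_4}$ and replace $Y_i$ by $X_i := Y_i - \alpha_i X_3 - \beta_i X_4$, so that $a_{X_i} = 0$ for all $i \geqslant 5$.

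The key structural observation is that, by Proposition \ref{prop3}, $\A_\G$ is a $2$-dimensional commutative subalgebra of $\E\G^1 \cong \M_2(\R)$; since $[\frac{4}{4}]+1 = 2$, Proposition \ref{Schur-JacobsonThm} shows $\A_\G$ is a \emph{maximal} commutative subalgebra, and such a subalgebra of $\M_2(\R)$ must contain the identity $I$ (otherwise $\A_\G + \R I$ would be commutative of dimension $3$). Hence I may re-choose the generators so that $a_{X_4} = I$ and $a_{X_3}$ is non-scalar. Having an invertible generator makes the reduction of the remaining brackets routine. Since $a_{X_4} = I$, I can annihilate $[X_3, X_4]$ by the substitution $X_3 \mapsto X_3 + [X_3, X_4]$ (which leaves $a_{X_3}$ unchanged, as we only modify $X_3$ by an element of the commutative ideal $\G^1$). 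For each $i \geqslant 5$, the Jacobi identity on $(X_3, X_4, X_i)$ yields $a_{X_4}\big([X_3, X_i]\big) = a_{X_3}\big([X_4, X_i]\big)$, so the single substitution $X_i \mapsto X_i - [X_4, X_i]$ simultaneously kills $[X_4, X_i]$ and $[X_3, X_i]$; the Jacobi identity on $(X_4, X_i, X_j)$ then forces $[X_i, X_j] = 0$ for $i, j \geqslant 5$. Thus $X_5, \ldots, X_n$ become central and $\G \cong \bar\G \oplus \R^{n-4}$, where $\bar\G = \s\{X_1, X_2, X_3, X_4\}$ with $[X_3, X_4] = 0$, $a_{X_4} = I$, and $a_{X_3} = A$ non-scalar.

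It then remains to classify $\bar\G$, which is completely determined by the conjugacy class of $A = a_{X_3}$ modulo the admissible changes of data: conjugation by $\GL_2(\R)$ (a change of basis of $\G^1$), the shift $A \mapsto A - tI$ (replacing $X_3$ by $X_3 - tX_4$), and the rescaling $A \mapsto cA$ (replacing $X_3$ by $cX_3$). Splitting according to the eigenvalues of $A$ gives exactly three canonical forms, $A = \begin{bmatrix} 0 & 1 \\ 0 & \lambda \end{bmatrix}$ when $A$ has real eigenvalues (one normalised to $0$) and $A = \begin{bmatrix} 0 & -1 \\ 1 & 0 \end{bmatrix}$ when $A$ has complex eigenvalues. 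Reading off the brackets in each case identifies $\bar\G$: the complex case is precisely $\af(\C)$, giving assertion (3); in the real case with $\lambda = 0$ one obtains the indecomposable $\G_{4,2.3(0)}$, giving assertion (2); and for $\lambda \neq 0$ the matrix $A$ is diagonalisable with two distinct eigenvalues, so that the two common eigenlines of $A$ and $I$ split $\bar\G$ into $\af(\R) \oplus \af(\R)$, giving assertion (1). I expect the main obstacle to lie in this last step: enumerating the $2$-dimensional commutative subalgebras of $\M_2(\R)$ up to the combined equivalence and, in particular, recognising that the real-eigenvalue normal form $\G_{4,2.3(\lambda)}$ is decomposable precisely when $\lambda \neq 0$, so that assertions (1) and (2) overlap rather than being mutually exclusive. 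The remaining verifications --- that each canonical form satisfies the Jacobi identity and that distinct forms are non-isomorphic (the conjugacy class of $\A_\G$ being a Lie-algebra invariant) --- are routine.
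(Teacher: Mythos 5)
Your proposal is correct, and it takes a genuinely different route from the paper. The paper proves Lemma \ref{lem4} by a trichotomy on the eigenvalue types of $a_{X_3}$ and $a_{X_4}$ (both real, mixed, both complex), invoking Lie's Theorem to produce common --- possibly complex --- eigenvectors, a weight-space dichotomy $\dim E_\Lambda \in \{1,2\}$, and a complexification argument to fold the both-complex case back into the mixed one; the eliminations $[X_3,X_4]=[X_3,X_k]=[X_4,X_k]=[X_i,X_j]=0$ are then redone separately inside each case. Your single structural observation --- that $\A_\G$ attains the Schur--Jacobson bound $\left[\frac{4}{4}\right]+1=2$ in $\M_2(\R)$, hence is a maximal commutative subalgebra and must contain $I$ --- short-circuits all of this: it hands you the invertible generator $a_{X_4}=I$ once and for all, makes the bracket-killing uniform (your substitutions $X_3 \mapsto X_3+[X_3,X_4]$ and $X_i \mapsto X_i - [X_4,X_i]$ do check out, using $a_{X_4}=I$, $a_{X_i}=0$ and the Jacobi identity exactly as you state), and reduces the whole lemma to normal forms of one non-scalar matrix $A$ under $A \mapsto cCAC^{-1}+tI$, which yields precisely $\begin{bmatrix} 0 & 1 \\ 0 & \lambda \end{bmatrix}$ and $\begin{bmatrix} 0 & -1 \\ 1 & 0 \end{bmatrix}$, i.e. outcomes (1)--(3). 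The paper's approach buys self-contained matrix manipulations; yours buys brevity and a conceptual explanation of why $a_{X_4}=I$ emerges in every surviving branch of the paper's case analysis.

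Your closing remark about the overlap of assertions (1) and (2) is not merely an anticipated obstacle: it is correct, and it exposes a defect that the paper's own proof does not notice. For $\lambda \neq 0$ the operator $a_{X_3}$ has distinct eigenvalues $0,\lambda$, hence is diagonalizable, and since $a_{X_4}=I$ the pair is simultaneously diagonalizable; taking eigenvectors $u, v \in \G^1$ of $a_{X_3}$ for $0$ and $\lambda$, one checks that $\G_{4,2.3(\lambda)} = \s \left\{X_4 - \tfrac{1}{\lambda}X_3,\, u\right\} \oplus \s \left\{\tfrac{1}{\lambda}X_3,\, v\right\} \cong \af(\R) \oplus \af(\R)$. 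So the paper's subcases 1A and 1B of Case 1 are not mutually exclusive as claimed: in subcase 1B, once $a_{X_4}=I$ is forced, the standing hypothesis $\dim E_\Lambda = 1$ (a unique common eigenline) actually forces $a_{X_3}$ to have a repeated eigenvalue, i.e. $\lambda = 0$, and the values $\lambda \neq 0$ obtained there duplicate subcase 1A. This redundancy propagates to the ``one and only one'' claim in Theorem \ref{thm1} and to the assertions in Section \ref{sec:5} that Sch\"obel and Mubarakzyanov ``missed'' the family $\G_{4,2.3(\lambda \neq 0)}$; your normal-form analysis shows that only $\lambda = 0$ produces a new indecomposable algebra. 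In short, your proof is sound (modulo the routine verifications you defer, all of which go through) and on this point sharper than the paper's.
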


	   \begin{proof}
	      Firstly, we choose a basis $(X_1, X_2)$ of $\G^1 \cong \R^2$. By assumption $\dim \A_\G = 2$, we can always choose two 
	      distinct elements $X_3, X_4 \in \G \setminus \G^1$ such that $(a_{X_3}, a_{X_4})$ is a basis of $\A_{\G}$. 
	      By adding $Y_5, \ldots, Y_n$, if necessary (i.e. when $n > 4$), we get the basis $(X_1, X_2, X_3, X_4, Y_5, \ldots, Y_n)$ of $\G$.
	      
	      Because $a_{Y_i} \in \A_\G = \s \{a_{X_3}, a_{X_4}\}$\,($i = 5, \ldots, n$), we can assume that
	      \[
	         a_{Y_i} = \alpha_ia_{X_3} + \beta_ia_{X_4}, \quad i = 5, \ldots, n.
	      \]
	      By setting $X_i = Y_i - \alpha_iX_3 - \beta_iX_4$ we get $a_{X_i} = 0$ for all $i \geqslant 5$. Therefore, we get a new basis 
	      $(X_1, X_2, \ldots, X_n)$ of $\G$ such that $\G^1 = \s \{X_1, X_2\} \cong \R^2$, $\A_\G = \s \{a_{X_3}, a_{X_4}\}$ and $a_{X_i} = 0$ 
	      for all $i \geqslant 5$. Next, we will proceed the proof of the lemma by considering three mutually-exclusive cases of $a_{X_3}$ 
	      and $a_{X_4}$ as follows.

	      \begin{enumerate}[\bf 1.]
	         \item\label{case1-lem4} {\bf The first case of Lemma \ref{lem4}: $a_{X_3}$ and $a_{X_4}$ have real eigenvalues}
	         
	         In this case, according to Lie's Theorem, $a_{X_3}$ and $a_{X_4}$ must have at least one common eigenvector $ T_0 \in \G^1$. 
	         We can assume that
	         \[
	            a_{X_3}(T_0) = \lambda_{X_3}T_0, \quad a_{X_4}(T_0) = \lambda_{X_4}T_0, \quad (\lambda_{X_3}, \lambda_{X_4} \in \R).
	         \]
	         Then by setting $\Lambda(a_{X_3}) := \lambda_{X_3}$ and $\Lambda(a_{X_4}) := \lambda_{X_4}$ and linear extending we 
	         get a weight function $\Lambda: \A_\G \to \R$. We now consider the weight space of $\G^1$ corresponding to weight function 
	         $\Lambda$ as follows
	         \[
	            E_{\Lambda} := \{T \in \G^1: a_X(T) = \Lambda(a_X)T \text{ for all } X \in \G\}.
	         \]
	         According to Lie's Theorem, $E_{\Lambda}$ is a non-trivial subspace of $\G^1$. Therefore, we have two mutually-exclusive 
	         subcases as follows.
	         
	         \begin{enumerate}[\bf A]
	            \item\label{case1A-lem4} {\bf The first subcase of Case 1 in Lemma \ref{lem4}: $\dim E_{\Lambda} = 2$.}
	            
	            Because $a_{X_3}$ and $a_{X_4}$, in this subcase, have two linearly independent eigenvectors, they are diagonalizable. 
	            So there exists a basis of $E_{\Lambda} \equiv \G^1$ such that both $a_{X_3}$ and $a_{X_4}$ have the following diagonal forms
	            \begin{equation}\label{eq1-lem4}
	               a_{X_3} = \begin{bmatrix} a & 0 \\ 0 & b \end{bmatrix}, \quad
	               a_{X_4} = \begin{bmatrix} c & 0 \\ 0 & d \end{bmatrix}, \quad \left(a^2 + b^2 \neq 0 \neq c^2 + d^2\right).
	            \end{equation}
	            Without loss of generality, we can assume that the basis $(X_1, X_2, \ldots, X_n)$ satisfies diagonal condition of $a_{X_3}$ 
	            and $a_{X_4}$ as in \eqref{eq1-lem4}. Moreover, matrix $\begin{bmatrix} a & b \\ c & d \end{bmatrix}$ is invertible due to 
	            linear independence of $a_{X_3}$ and $a_{X_4}$. We set
	            \[
	               \begin{bmatrix} x_{11} & x_{12} \\ x_{21}& x_{22} \end{bmatrix} = \begin{bmatrix} a & b \\ c & d \end{bmatrix}^{-1}.
	            \]
	            That means
	            \begin{equation}\label{eq2-lem4}
	               \begin{cases}
	                  ax_{11} + cx_{12} = 1,\\
	                  bx_{11} + dx_{12} = 0,\\
	               \end{cases} \quad \text{and} \quad
	               \begin{cases}
	                  ax_{21} + cx_{22} = 0,\\
	                  bx_{21} + dx_{22} = 1.\\
	               \end{cases}
	            \end{equation}
	            Now, by using the transformation
	            \[
	               X'_3 = x_{11}X_3 + x_{12}X_4, \quad X'_4 = x_{21}X_3 + x_{22}X_4,
	            \]
	            and taking account of \eqref{eq2-lem4}, we convert $a_{X_3}$ and $a_{X_4}$ to the following forms
	            \[
	               a_{X_3} = \begin{bmatrix} 1 & 0 \\ 0 & 0 \end{bmatrix} \quad \text{and} \quad
	               a_{X_4} = \begin{bmatrix} 0 & 0 \\ 0 & 1 \end{bmatrix}.
	            \]
	            Now we deal with the remaining Lie brackets. For convenience, we set 
	            \[
	               \begin{array}{l l l}
	                  [X_3, X_4] = \alpha X_1 + \beta X_2, \\
	                  \left[X_3, X_k\right] = a_kX_1 + b_kX_2, && k \geqslant 5, \\
	                  \left[X_4, X_k\right] = c_kX_1 + d_kX_2, && k \geqslant 5, \\
	                  \left[X_i, X_j\right] = y_{ij}X_1 + z_{ij}X_2, && 5 \leqslant i < j \leqslant n.
	               \end{array}
	            \]
	            \begin{itemize}
	               \item[$\bullet$] Using the Jacobi identity for triples $(X_3, X_i, X_j)$ and $(X_4, X_i, X_j)$ we get $y_{ij} = z_{ij} = 0$ for all 
	               $5 \leqslant i < j \leqslant n$.
	               \item[$\bullet$] Similarly, using the Jacobi identity for triples $(X_3, X_4, X_k)$ we get all $b_k = c_k = 0$. Then by the 
	               transformation $X'_k := X_k - a_kX_1 - d_kX_2$ we reduce all $a_k$ and $d_k$ to zero value.
	               \item[$\bullet$] Finally, by the change of basis $X'_3 = X_3 + \beta X_2$ and $X'_4 = X_4 - \alpha X_1$ we also reduce the 
	               values of $\alpha$ and $\beta$ to zero.
	            \end{itemize}
	            To summarize, in this subcase, $\G \cong \s \{X_1, X_2, X_3, X_4\} \oplus \R^{n-4}$ when $n \geqslant 4$, with
	            \[
	               a_{X_3} = \begin{bmatrix} 1 & 0 \\ 0 & 0 \end{bmatrix} \quad \text{and} \quad
	               a_{X_4} = \begin{bmatrix} 0 & 0 \\ 0 & 1 \end{bmatrix}.
	            \]
	            This means
	            \begin{itemize}
	               \item[$\bullet$]$\boldsymbol{\G \cong \af (\R) \oplus \af (\R)}$ \qquad \qquad \quad when $n=4$.
	               \item[$\bullet$]$\boldsymbol{\G \cong \af (\R) \oplus \af (\R) \oplus \R^{n-4}}$ \quad when $n > 4$.
	            \end{itemize}
	            Note that, these algebras are exactly ones listed in {\bf subcase 2.1} of Theorem \ref{thm1}.

	            \item\label{case1B-lem4} {\bf The second subcase of Case 1 in Lemma \ref{lem4}: $\dim E_\Lambda = 1$.}
	            	            
	            In this subcase, we can always assume, without loss of generality, that $X_1$ is a simultaneous eigenvector of $a_{X_3}$ 
	            and $a_{X_4}$. Hence, both $a_{X_3}$ and $a_{X_4}$ have the form of the upper triangle matrices.
	            
	             First of all, using the well-known techniques in Linear algebra and noting that the role of $X_3, X_4$ is not equal, we can 
	             always reduce $a_{X_4}$ to one and only one of the following forms
	            \[
	               \begin{bmatrix} 0 & 0 \\ 0 & 1 \end{bmatrix}, \quad
	               \begin{bmatrix} 1 & 0 \\ 0 & \mu \end{bmatrix}, \quad
	               \begin{bmatrix} \mu & 1 \\ 0 & \mu \end{bmatrix} \quad (\mu \in \R),
	            \]
	            while $a_{X_3}$ get the form as follows	           
	               $a_{X_3} = \begin{bmatrix} a_{11} & a_{12} \\ 0 & a_{22} \end{bmatrix} \, \neq 0$.

	            Now we have three mutually-exclusive subcases due to the above types of $a_{X_4}$.
	            \begin{itemize}
	               \item[$\bullet$] Firstly, assume that $a_{X_4} = \begin{bmatrix} 0 & 0 \\ 0 & 1 \end{bmatrix}$. Then $X_2$ is also a eigenvector 
	               of $a_{X_4}$. Therefore, $X_2$ is not a eigenvector of $a_{X_3}$ since $\dim E_{\Lambda} = 1$, i.e. $a_{12} \neq 0$. 
	               Moreover, it follows from the commutativity of $a_{X_3}$ with $a_{X_4}$ that $a_{12} = 0$. This contradiction shows that this 
	               subcase is impossible.
	               
	               \item[$\bullet$] Next, assume that $a_{X_4} = \begin{bmatrix} 1 & 0 \\ 0 & \mu \end{bmatrix}$\,($\mu \in \R$). Then $X_2$ is 
	               also a eigenvector of $a_{X_4}$. In the same way as above, we get $a_{12} \neq 0$. Here, the commutativity of $a_{X_3}$ 
	               with $a_{X_4}$ shows that
	               \[
	                  a_{12}(\mu - 1) = 0 \quad \Longrightarrow \quad \mu = 1 \quad \Longrightarrow \quad 
	                  a_{X_4} = \begin{bmatrix} 1 & 0 \\ 0 & 1 \end{bmatrix}.
	               \]
	               Now, by setting $X'_3 = \frac{1}{a_{12}}X_3 - \frac{a_{11}}{a_{12}}X_4$ we convert $a_{X_3}$ to the form 
	               $\begin{bmatrix} 0 & 1 \\ 0 & \lambda \end{bmatrix}$, where $\lambda = \dfrac{a_{22} - a_{11}}{a_{12}} \in \R$. 
	               In other words, we can always assume, in this subcase, that
	               \[
	                  a_{X_3} = \begin{bmatrix} 0 & 1 \\ 0 & \lambda \end{bmatrix} \; (\lambda \in \R) \quad \text{and} \quad
	                  a_{X_4} = \begin{bmatrix} 1 & 0 \\ 0 & 1 \end{bmatrix}.
	               \]
               
	               \item[$\bullet$] Lastly, assume that $a_{X_4} = \begin{bmatrix} \mu & 1 \\ 0 & \mu \end{bmatrix} \, (\mu \in \R)$. Then, the 
	               commutativity of $a_{X_3}$ with $a_{X_4}$ implies $a_{11} = a_{22}$. By setting $a = a_{11} = a_{22}$ and $b = a_{12}$ 
	               we get $a_{X_3} = \begin{bmatrix} a & b \\ 0 & a \end{bmatrix}$.
	               \begin{itemize}
	                  \item If $b = 0$ then $a_{X_3} = \begin{bmatrix} a & 0 \\ 0 & a \end{bmatrix} \neq 0$. In particular, $a \neq 0$. 
	                  Now, we use the change of basis as follows
	                  \[
	                     X'_3 := -\frac{\mu}{a}X_3 + X_4, \quad X'_4 := \frac{1}{a}X_3.
	                  \]
	                 Then, we reduce $a_{X_3}$ and $a_{X_4}$ to the following forms
	                  \[
	                     a_{X_3} = \begin{bmatrix} 0 & 1 \\ 0 & 0 \end{bmatrix} \quad \text{and} \quad
	                     a_{X_4} = \begin{bmatrix} 1 & 0 \\ 0 & 1 \end{bmatrix}.
	                  \]
	                  Here, we emphasize that $a_{X_3} = \begin{bmatrix} 0 & 1 \\ 0 & \lambda \end{bmatrix}$ with $\lambda = 0$.	                  
	                  \item  If $b \neq 0$, replacing $X_3$ by $\frac{1}{b}X_3$ we can convert $a_{X_3}$ to the form 
	                  $\begin{bmatrix} \nu & 1 \\ 0 & \nu \end{bmatrix}$, where $\nu = \frac{a}{b} \in \R$. Since $a_{X_3}$ and $a_{X_4}$ are 
	                  linearly independent, $\mu \neq \nu$. Now we change basis as follows
	                  \[
	                     X'_3 = \frac{\mu}{\mu - \nu}X_3 - \frac{\nu}{\mu - \nu}X_4, \quad
	                     X'_4 = -\frac{1}{\mu - \nu}X_3 + \frac{1}{\mu - \nu}X_4.
	                  \]
	                 Once again, we reduce $a_{X_3}$ and $a_{X_4}$ to the following forms
	                  \[
	                     a_{X_3} = \begin{bmatrix} 0 & 1 \\ 0 & 0 \end{bmatrix} \quad \text{and} \quad
	                     a_{X_4} = \begin{bmatrix} 1 & 0 \\ 0 & 1 \end{bmatrix},
	                  \]
	                  i.e. $a_{X_3} = \begin{bmatrix} 0 & 1 \\ 0 & \lambda \end{bmatrix}$ with $\lambda = 0$.	                  
	               \end{itemize}
	            \end{itemize}
	            
	            Combining all above arguments, we can always assume that
	            \[
	               a_{X_3} = \begin{bmatrix} 0 & 1 \\ 0 & \lambda \end{bmatrix} \; (\lambda \in \R) \quad \text{and} \quad
	               a_{X_4} = \begin{bmatrix} 1 & 0 \\ 0 & 1 \end{bmatrix}.
	            \]
	            Now, we deal with the remaining Lie brackets. For convenience, we set
	            \[
	               \begin{array}{l l l}
	                  [X_3, X_4] = \alpha X_1 + \beta X_2, \\
	                  \left[X_3, X_k\right] = a_kX_1 + b_kX_2, && k \geqslant 5, \\
	                  \left[X_4, X_k\right] = c_kX_1 + d_kX_2, && k \geqslant 5, \\
	                  \left[X_i, X_j\right] = y_{ij}X_1 + z_{ij}X_2, && 5 \leqslant i < j \leqslant n.
	               \end{array}
	            \]
	            \begin{itemize}
	               \item[$\bullet$] Firstly, using the Jacobi identity for triples $(X_4, X_i, X_j)$ we get all $y_{ij} = z_{ij} = 0$.
	               \item[$\bullet$] Next, by the transformation $X'_k := X_k - c_kX_1 - d_kX_2$ we convert the values of  $c_k$ and $d_k$ to zero. 
	               \item[$\bullet$] After that, using the Jacobi identity for triples $(X_3, X_4, X_k)$ we get $a_k = b_k = 0$.
	               \item[$\bullet$] Finally, we also reduce the values of $\alpha$ and $\beta$ to zero by using the transformation 
	               $X'_3 = X_3 + \alpha X_1 + \beta X_2$.
	            \end{itemize}
	            To summarize, in this subcase we have $\G \cong  \s \{X_1, X_2, X_3, X_4\} \oplus \R^{n-4}$ with
	            \[
	               a_{X_3} = \begin{bmatrix} 0 & 1 \\ 0 & \lambda \end{bmatrix} \; (\lambda \in \R) \quad \text{and} \quad
	               a_{X_4} = \begin{bmatrix} 1 & 0 \\ 0 & 1 \end{bmatrix}.
	            \]
	         \end{enumerate}
	         This means
	         \begin{itemize}
	            \item[$\bullet$]$\boldsymbol{\G \cong \G_{4,2.3(\lambda)}}$ \qquad \qquad \quad when $n=4$,
	            \item[$\bullet$]$\boldsymbol{\G \cong \G_{4,2.3(\lambda)}\oplus \R^{n-4}}$ \quad when $n > 4$,
	         \end{itemize}
	         where $\lambda \in \R$. Note that, these algebras are exactly ones listed in {\bf subcase 1.2} of Theorem \ref{thm1}.

	         \item\label{case2-lem4} {\bf The second case of Lemma \ref{lem4}: One of $a_{X_3}$ and $a_{X_4}$ has real eigenvalues, 
	         the other has complex eigenvalues.}
	         
	         Without loss of generality, we assume that $a_{X_4}$ has two real eigenvalues (distinct or coincident), and $a_{X_3}$ has 
	         two conjugate complex eigenvalues. Therefore, we have
	         \[
	            a_{X_4} \in \left\{ \begin{bmatrix} 1 & 0 \\ 0 & \lambda \end{bmatrix}, \begin{bmatrix} \lambda & 1 \\ 0 & \lambda \end{bmatrix} \, \Big| \,
	            \lambda \in \R \right\}, \quad
	            a_{X_3} = \begin{bmatrix} a_{11} & a_{12} \\ a_{21} & a_{22} \end{bmatrix}.
	         \]
	         Because $a_{X_3}$ has two conjugate complex eigenvalues, its characteristic polynomial
	         \[
	            p(x) = x^2 - (a_{11}+a_{22})x + (a_{11}a_{22} - a_{12}a_{21})
	         \]
	         has the discriminant $\Delta = (a_{11}+a_{22})^2 - 4(a_{11}a_{22} - a_{12}a_{21}) = (a_{11} - a_{22})^2 + 4a_{12}a_{21} < 0$. In particular 
	         \begin{equation}\label{ineq-lem4}
	            a_{12}a_{21} < 0 \quad \text{and} \quad
	            \det \left(a_{X_3}\right) = a_{11}a_{22} - a_{12}a_{21} > 0.
	         \end{equation}
	         According to the types of $a_{X_4}$, we have the following subcases.
	         \begin{itemize}
	            \item[$\bullet$] If $a_{X_4} = \begin{bmatrix} \lambda & 1 \\ 0 & \lambda \end{bmatrix} \, (\lambda \in \R)$, the commutativity 
	            of $a_{X_3}$ with $a_{X_4}$ implies $ a_{21} = 0$ and $a_{11} = a_{22}$ which contradicts to the inequations \eqref{ineq-lem4}. 
	            Therefore, this subcase is impossible.
	            
	            \item[$\bullet$] If $a_{X_4} = \begin{bmatrix} 1 & 0 \\ 0 & \lambda \end{bmatrix} \, (\lambda \in \R)$,  the commutativity of 
	            $a_{X_3}$ with $a_{X_4}$ implies $\lambda = 1$ or $a_{12} = a_{21} = 0$. Combining these results with the inequations 
	            \eqref{ineq-lem4}, we get $\lambda = 1$, i.e. $a_{X_4} = \begin{bmatrix} 1 & 0 \\ 0 & 1 \end{bmatrix}$.
	         \end{itemize}
	         
	         Now, by the same way as the first case \ref{case1-lem4} in the proof of Lemma \ref{lem4} above, we get
	         \[
	            [X_3, X_4] = [X_3, X_k] = [X_4, X_k] = [X_i, X_j] = 0, \; \text{for all } k \geqslant 5 \text{ and } 5 \leqslant i < j \leqslant n.
	         \]
	         Replacing $X_3$ by $\frac{1}{\sqrt{\det a_{X_3}}}X_3$ we convert the value of $\det \left(a_{X_3}\right)$ to 1. 
	         Hence, without loss of generality, we can now assume that
	         \[
	            a_{X_3} = \begin{bmatrix} a_{11} & a_{12} \\ a_{21} & a_{22} \end{bmatrix}, \quad 
	            \text{where $\det \left(a_{X_3}\right) = a_{11}a_{22} - a_{12}a_{21} = 1$ and $a_{12}a_{21} < 0$}.
	         \]
	         It follows easily from a well-known fundamental result in Linear Algebra that there exists a basis $(X'_1, X'_2)$ of $\G^1$ in 
	         which $a_{X_3}$ achieves its \emph{real Jordan normal form} as follows
	         \[
	            a_{X_3} = \begin{bmatrix} \cos \varphi & -\sin \varphi \\ \sin \varphi & \cos \varphi \end{bmatrix}, \quad \varphi \in (0, \pi),
	         \]
	         while $a_{X_4}$ is obviously unchanged because it is the identity operator. Finally, by setting 
	         \[
	            X'_3 = - \frac{1}{\sin \varphi}X_3 + (\cot \varphi)X_4,
	         \]
	         we reduce $a_{X_3}$ to the new form $a_{X_3} = \begin{bmatrix} 0 & 1 \\ -1 & 0 \end{bmatrix}$. Recall that, here, 
	         $a_{X_4} = \begin{bmatrix} 1 & 0 \\ 0 & 1 \end{bmatrix}$. It is well-known that $\s \{X_1, X_2, X_3, X_4\}$ with such 
	         $a_{X_3}$ and $a_{X_4}$ is the \emph{complex affine Lie algebra $\af (\C)$}.
	         
	         To summarize, in this case, we get 
	         \begin{itemize}
	            \item[$\bullet$]$\boldsymbol{\G \cong \af (\C)}$ \qquad \qquad \, when $n=4$.
	            \item[$\bullet$]$\boldsymbol{\G \cong \af (\C) \oplus \R^{n-4}}$ \, when $n > 4$.
	         \end{itemize}
	         Note that, the complex affine Lie algebra $\af (\C)$ is exactly the one listed in {\bf subcase 1.2 (iv)} of Theorem \ref{thm1}.

	         \item\label{case3-lem4} {\bf The third case of Lemma \ref{lem4}: $a_{X_3}$ and $a_{X_4}$ have complex eigenvalues.}
	         
	         In this case, according to Lie's Theorem, $a_{X_3}$ and $a_{X_4}$ must have at least one common complex eigenvector $T$ 
	         of the following form
	         \[
	            T=(a+ic)X_1+(b+id)X_2=X'_1+iX'_2 \in \left(\G^1\right)^\C = \G^1 \oplus i\G^1,
	         \]
	         where $i$ is the {\it imaginary unit}, $\left(\G^1\right)^\C$ is the \emph{complexification of $\G^1$}, and
	         \[
	            X'_1 = aX_1 + bX_2, \quad X'_2 = cX_1+ dX_2, \quad (a, b, c, d \in \R).
	         \]
	         
	         Suppose that $\alpha_{X_3} + i\beta_{X_3}$ (resp. $\alpha_{X_4} + i\beta_{X_4}$) is the complex eigenvalue corresponding to 
	         the eigenvector $T$ of $ a_{X_3}$ (resp. $a_{X_4}$). The equation $a_{X_3} (T) = (\alpha_{X_3} + i\beta_{X_3})T$ shows that
	         \begin{equation}\label{eq3-lem4}
	            \begin{cases}
	               [X_3, X'_1] = \alpha_{X_3}X'_1 - \beta_{X_3}X'_2, \\
	               [X_3, X'_2] = \beta_{X_3}X'_1 + \alpha_{X_3}X'_2.
	            \end{cases}
	         \end{equation}
	         Similarly, the equation $a_{X_4}(T) = (\alpha_{X_4} + i\beta_{X_4})T$ shows that
	         \begin{equation}\label{eq4-lem4}
	            \begin{cases}
	               [X_4, X'_1] = \alpha_{X_4}X'_1 - \beta_{X_4}X'_2, \\
	               [X_4, X'_2] = \beta_{X_4}X'_1 + \alpha_{X_4}X'_2.
	            \end{cases}
	         \end{equation}
	         
	         It is not hard to verify that $(X'_1, -X'_2)$ is a basis of $\G^1$. Taking account of equations \eqref{eq3-lem4} and 
	         \eqref{eq4-lem4}, the matrices of $a_{X_3}$ and $a_{X_4}$ with respect to this basis are given as follows
	         \[
	            a_{X_3} = \begin{bmatrix} \alpha_{X_3} & -\beta_{X_3} \\ \beta_{X_3} & \alpha_{X_3} \end{bmatrix} \quad \text{and} \quad
	            a_{X_4} = \begin{bmatrix} \alpha_{X_4} & -\beta_{X_4} \\ \beta_{X_4} & \alpha_{X_4} \end{bmatrix}.
	         \]
	         
	         Since $ a_{X_3}$ and $a_{X_4}$ are linearly independent, the matrix $\begin{bmatrix} \alpha_{X_3} & \alpha_{X_4} \\ 
	         \beta_{X_3} & \beta_{X_4} \end{bmatrix}$ is invertible. Now we set
	         \[
	            \begin{bmatrix} x & y \\ z & t \end{bmatrix} =
	            \begin{bmatrix} \alpha_{X_3} & \alpha_{X_4} \\ \beta_{X_3} & \beta_{X_4} \end{bmatrix}^{-1}
	            \begin{bmatrix} 1 & 0 \\ 0 & -1 \end{bmatrix}.
	         \]
	         It  means that
	         \begin{equation}\label{eq5-lem4}
	            \begin{cases}
	               \alpha_{X_3}x + \alpha_{X_4}z = 1,\\
	               \beta_{X_3}x + \beta_{X_4}z = 0,
	            \end{cases}
	            \quad \text{and} \quad
	            \begin{cases}
	               \alpha_{X_3}y + \alpha_{X_4}t = 0,\\
	               \beta_{X_3}y + \beta_{X_4}t = -1.
	            \end{cases}
	         \end{equation}
	         
	         Then by using the transformation
	         \[
	            X'_3 = yX_3 + tX_4, \quad X'_4 = xX_3 + zX_4,
	         \]
	         and taking account of equations \eqref{eq5-lem4}, we convert $a_{X_3}$ and $a_{X_4}$ to the following forms
	         \[
	            a_{X_3} = \begin{bmatrix} 0 & 1 \\ -1 & 0 \end{bmatrix} \quad \text{and} \quad
	            a_{X_4} = \begin{bmatrix} 1 & 0 \\ 0 & 1 \end{bmatrix}.
	         \]
	         Therefore, Case \ref{case3-lem4} returns to Case \ref{case2-lem4} above, i.e. we get, once again, $\G \cong \af(\C)$ when $n = 4$ and $\G \cong \af(\C) \oplus \R^{n-4}$ when $n > 4$.
	      \end{enumerate}
	      The proof of Lemma \ref{lem4} is complete.
	   \end{proof}

	   It is obvious that Lemmas \ref{lem1}, \ref{lem2}, \ref{lem3} and \ref{lem4} have covered all mutually-exclusive possibilities of $\A_{\G} \neq 0$. 
	   All algebras listed in Theorem \ref{thm1} have appeared in the proof. Hence, the proof of Theorem \ref{thm1} is complete. 
	   For convenience, we summarize Theorem \ref{thm1} case by case in Table \ref{tab1} by which we can see clearly a new complete 
	   classification of non 2-step nilpotent Lie algebras in \Li. 
	   \begin{table}[!h]
	      \caption{A new classification of non 2-step nilpotent Lie algebras in {\Li}}\label{tab1} 
	      \begin{tabular}{l l l l l l}
	         \hline\noalign{\smallskip}
	            Lemmas & Cases & Indecomposable Types & Conditions & Decomposable Types & Comments \\
	         \noalign{\smallskip}\hline\noalign{\smallskip}
	         \multirow{3}{*}{\ref{lem2}} & \ref{case1-lem2} & $\G_{3,2.1(\lambda)}$ & $\lambda \in \R \setminus \{0\}$ & 
	         $\R^{n-3} \oplus \G_{3,2.1(\lambda)}$\\
	         & \ref{case2-lem2} & $\G_{3,2.2}$ & & $\R^{n-3} \oplus \G_{3,2.2}$ \\
	         & \ref{case3-lem2} & $\G_{3,2.3(\varphi)}$ & $\varphi \in (0, \pi)$ & $\R^{n-3} \oplus \G_{3,2.3(\varphi)}$ \\ 
	         \noalign{\smallskip}\hline\noalign{\smallskip}
	         \multirow{6}{*}{\ref{lem3}} & \ref{case1B-lem3} & $\G_{4,2.1}$ & & $\R^{n-4} \oplus \G_{4,2.1}$\\
	         & \ref{case2A-lem3} & $\G_{4,2.2}$ & & $\R^{n-4} \oplus \G_{4,2.2}$ & 3-step nilpotent \\
	         \noalign{\smallskip}\cline{2-6}\noalign{\smallskip}
	         & \ref{case1A-lem3} \& \ref{case1B-lem3} & & & $\R^{n-2m-3} \oplus \af (\R) \oplus \h_{2m+1}$ \\
	         \noalign{\smallskip}\cline{2-6}\noalign{\smallskip}
	         & \ref{case2B-lem3} & $\G_{5+2k,2}$ & & $\R^{n-5-2k} \oplus \G_{5+2k,2}$ & 3-step nilpotent \\
	         \noalign{\smallskip}\cline{2-6}\noalign{\smallskip}
	         & \ref{case1B-lem3} & $\G_{6+2k,2.1}$ & & $\R^{n-6-2k} \oplus \G_{6+2k,2.1}$ & \\
	         & \ref{case2B-lem3} & $\G_{6+2k,2.2}$ & & $\R^{n-6-2k} \oplus \G_{6+2k,2.2}$ & 3-step nilpotent \\ \hline
	         \multirow{4}{*}{\ref{lem4}} & \ref{case1A-lem4} & & & $\R^{n-4} \oplus \af (\R) \oplus \af (\R)$ \\
	         \noalign{\smallskip}\cline{2-6}\noalign{\smallskip}
	         & \ref{case1B-lem4} & $\G_{4,2.3(\lambda)}$ & $\lambda \in \R$ & $\R^{n-4} \oplus \G_{4,2.3(\lambda)}$ \\
	         & \ref{case2-lem4} \& \ref{case3-lem4} & $\G_{4,2.4} = \af (\C)$ & & $\R^{n-4} \oplus \af (\C) $\\
	         \noalign{\smallskip}\hline
	      \end{tabular}
	   \end{table}
	  Furthermore, Table \ref{tab1} also gives us the following consequence.
	   
	   \begin{cor}\label{cor2}
	      There is no real indecomposable non-nilpotent solvable Lie algebra of odd dimension $n \geqslant 5$ which has 
	      2-dimensional derived ideal.
	   \end{cor}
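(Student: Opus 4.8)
The plan is to derive this corollary directly from the complete classification already in hand, rather than by any fresh structural argument. Recall that any $n$-dimensional real solvable Lie algebra whose derived ideal is $2$-dimensional is either $2$-step nilpotent---in which case it falls under Eberlein's classification and is in particular nilpotent---or else it is not $2$-step nilpotent and is therefore isomorphic to one of the algebras exhibited in Theorem \ref{thm1} and tabulated in Table \ref{tab1}. Since a nilpotent algebra can never serve as a counterexample, it suffices to scan the \emph{indecomposable} entries of Table \ref{tab1}, isolate those that are simultaneously \emph{non-nilpotent} and of \emph{odd} dimension $n \geqslant 5$, and verify that this sublist is empty.

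First I would record the dimensions of the indecomposable types. The families $\G_{3,2.1(\lambda)}$, $\G_{3,2.2}$, $\G_{3,2.3(\varphi)}$ live in dimension $3$; the families $\G_{4,2.1}$, $\G_{4,2.2}$, $\G_{4,2.3(\lambda)}$, $\G_{4,2.4}=\af(\C)$ live in dimension $4$; the single odd-dimensional family $\G_{5+2k,2}$ occupies dimension $5+2k$; and $\G_{6+2k,2.1}$, $\G_{6+2k,2.2}$ occupy the even dimension $6+2k$. Thus, apart from dimension $3$, the only indecomposable algebras of odd dimension appearing at all are the members of $\G_{5+2k,2}$ with $k \geqslant 0$.

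Next I would settle which of these are nilpotent. The relevant criterion is that, because $\I \ad_X \subseteq \G^1$ and $\ad_X|_{\G^1} = a_X$, the operator $\ad_X$ is nilpotent exactly when $a_X$ is; hence, by Engel's theorem, $\G$ is nilpotent precisely when the spanning operators $a_{X_3}$ (and $a_{X_4}$, when $\dim \A_\G = 2$) are nilpotent matrices, a linear combination of commuting nilpotents being again nilpotent. The rightmost column of Table \ref{tab1} already flags $\G_{4,2.2}$, $\G_{5+2k,2}$ and $\G_{6+2k,2.2}$ as $3$-step nilpotent; in particular $\G_{5+2k,2}$ is nilpotent. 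Hence the \emph{only} odd-dimensional indecomposable candidate with $n \geqslant 5$ is already nilpotent, so it does not qualify as non-nilpotent.

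Putting these observations together completes the argument: every indecomposable non-nilpotent solvable Lie algebra with $2$-dimensional derived ideal has dimension $3$ or even dimension $\geqslant 4$ (namely $\G_{3,2.*}$, $\G_{4,2.1}$, $\G_{4,2.3(\lambda)}$, $\af(\C)$, and $\G_{6+2k,2.1}$), whereas every odd-dimensional indecomposable algebra with $n \geqslant 5$---the $2$-step nilpotent ones from Eberlein together with $\G_{5+2k,2}$---is nilpotent. Consequently no indecomposable \emph{non-nilpotent} solvable Lie algebra of odd dimension $n \geqslant 5$ can have a $2$-dimensional derived ideal. The only genuinely delicate point is the bookkeeping: one must not overlook the lone odd-dimensional family $\G_{5+2k,2}$, and one must confirm---either from its Lie brackets or from the nilpotency of $a_{X_3}$---that it is indeed nilpotent; everything else is an immediate reading of the classification.
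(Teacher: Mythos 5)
Your proposal is correct and matches the paper's own reasoning: the paper derives Corollary \ref{cor2} precisely by inspecting Table \ref{tab1}, noting that the only odd-dimensional indecomposable families with $n \geqslant 5$ are the $3$-step nilpotent $\G_{5+2k,2}$ (the $2$-step nilpotent case of Eberlein being nilpotent as well), while all non-nilpotent indecomposable types have dimension $3$, $4$, or $6+2k$. Your extra verification of nilpotency via the nilpotency of $a_{X_3}$ and Engel's theorem is a sound, slightly more explicit justification of what the paper leaves to the table's annotations.
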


	   \subsection{\bf Proof of Theorem \ref{thm2}}
	   
	   By assumption, $\dim \A_\G = 1$, therefore we can choose an element $Z$ such that $a_Z \neq 0$ and $\A_\G = \s \{a_Z\}$. 
	   Now, we add $Y \in \G \setminus \G^1$ to get a basis $(X_1, X_2, \ldots, X_{n-2}, Y, Z)$ of $\G$. Then, there exists $y$ such that 
	   $a_Y = ya_Z$ and we can eliminate $a_Y$ by transformation $Y' = Y - yZ$. Therefore, without loss of generality, we can always 
	   assume that $a_Y = 0$. Recall that, by definition, $\G^1 = \s \{[Z, X_1], [Z, X_2], \ldots, [Z, X_{n-2}], [Z, Y]\}$. We first observe that  
	  \begin{itemize}
	  	\item[$\bullet$] $n-2 = \dim \G^1 = \dim \I (a_Z) + \dim \Ker (a_Z)$. If $\ra (a_Z) = n-3$ then $\dim \Ker (a_Z) = 1$.
	  	\item[$\bullet$] $n-3 \leqslant \dim \I (a_Z) = \ra (a_Z) = \ra \left([Z, X_1], [Z, X_2], \ldots, [Z, X_{n-2}]\right) \leqslant n-2$.
	  \end{itemize}
	  
	  There are two mutually-exclusive cases as follows.

	   \begin{enumerate}[\bf A.]
	      \item {\bf The first case of Theorem \ref{thm2}: $[Y, Z] = 0$ or $\ra (a_Z) = n-2$.}

		\begin{enumerate}
			\item[\bf A1.] {\bf The first subcase of Case A in Theorem \ref{thm2}: $[Y, Z] = 0$.}
			
			In this subcase, it is easily seen that $\G$ is decomposable. Namely, $\G = \R.Y \oplus \bar{\G}$, where 
			$\bar{\G} = \s \{X_1, X_2, \ldots, X_{n-2}, Z\} \in \mathrm{Lie} \left(n-1, n-2\right)$ with the derived ideal $\left[\bar{\G},\bar{\G}\right] = \G^1$ 
			is 1-codimensional.
			
			\item[\bf A2.] {\bf The second subcase of Case A in Theorem \ref{thm2}: $\ra (a_Z) = n-2$.}
	         
	         That means $\G^1 = \I (a_Z)$ and $a_Z$ is non-singular. Assume that 
	         \[
	            [Z, Y] = y_1 X_1 + y_2 X_2 + \ldots + y_{n-2} X_{n-2}.
	         \]
	         Then by setting
	         \[
	            Y' = Y - (y'_1 X_1 + y'_2 X_2 + \ldots + y'_{n-2} X_{n-2}) \quad \text{where} \quad
	            \begin{bmatrix} y'_1 \\ \vdots \\ y'_{n-2} \end{bmatrix} = a_{Z}^{-1} \begin{bmatrix} y_1 \\ \vdots \\ y_{n-2} \end{bmatrix}, 
	         \]
	         we get $[Z, Y'] = 0$. Therefore this subcase converts to case {\bf A1} which is considered above.  			
	      \end{enumerate}				      

	      \item\label{caseB-thm2} {\bf The second case of Theorem \ref{thm2}: $[Y, Z] \neq 0$ and $\ra (a_Z) = n-3$.}
	      
	      In this case, we have two following subcases. 

	      \begin{enumerate}[\bf B1.]
	         \item {\bf The first subcase of Case B in Theorem \ref{thm2}: $\I (a_Z) \cap \Ker (a_Z) = \{0\}$.}
      	   
      	   In this subcase, renumbering $X_1, X_2, \ldots, X_{n-2}$, if necessary, we can always assume that $([Z, X_1], \ldots, [Z, X_{n-3}])$ 
      	   is the basis of $\I (a_Z)$ and, of course, $[Z, Y]\in \G^1 \setminus \I (a_Z)$. It is easily seen that $\G^1 = \I (a_Z) \oplus \Ker (a_Z)$. 
      	   Therefore, we can set $[Z, Y] = a_Z(U) + V$ for some $U \in \G^1$ and $V \in \Ker (a_Z)$. Now, by replacing $X'_{n-2} = V$ and 
      	   $Y' = Y - U$, we obtain new bases $(X_1, \ldots, X_{n-3}, X'_{n-2})$ and $(X_1, \ldots, X_{n-3}, X'_{n-2}, Y',Z)$ of $\G^1$ and $\G$, 
      	   respectively. For these bases, we have $[Z, Y'] = X'_{n-2}$ and $a_Z$ is converted to the form as follows
      	   \[
      	      a_Z = \bar{A} = 
      	         \begin{bmatrix}
      	            a_{11} & \ldots & a_{1(n-3)} & 0 \\
      	            \vdots & \ddots & \vdots & \vdots \\
      	            a_{(n-3)1} & \ldots & a_{(n-3)(n-3)} & 0 \\
      	            0 & \ldots & 0 & 0
      	         \end{bmatrix} 
      	      = \begin{bmatrix} A & 0 \\ 0 & 0 \end{bmatrix},
      	   \]
      	   with $A = (a_{ij}) \in \GL_{n-3}(\R)$. Therefore, without loss of generality, we can always assume that $\G$ has a basis 
      	   $(X_1, X_2, \dots, X_{n-2}, Y, Z)$ such that $\G^1 = \s \{X_1, X_2, \ldots, X_{n-2}\} \cong \R^{n-2}$ and $[Z, Y] = X_{n-2}$. 
      	   Moreover, the Lie structure of $\G$ is completely determined by the matrix $\bar{A}$. 
      	   
      	   \item {\bf The second subcase of Case B in Theorem \ref{thm2}: $\I (a_Z) \cap \Ker (a_Z) \neq \{0\}$.}
      	   
      	   In this case, $\Ker (a_Z) \subset \I (a_Z)$ because $\dim \Ker (a_Z) = \dim \G^1 - \dim \I (a_Z) = 1$. We choose 
      	   $X'_1 \in \Ker (a_Z) \setminus \{0\}$ and add $X'_2, \ldots, X'_{n-3}$ to get a new basis $(X'_1, X'_2, \dots, X'_{n-3})$ of $\I (a_Z)$. 
      	   Since $[Z, Y] \in \G^1 \setminus \I (a_Z)$, we can set $X'_{n-2} = [Z, Y]$ and obtain the new basis $(X'_1, X'_2, \dots, X'_{n-2})$ 
      	   of $\G^1$. Then, $[Z, Y] = X'_{n-2}$ and $a_Z$ is converted to the form as follows
      	   \[
      	      a_Z = \bar{A} = 
      	         \begin{bmatrix}
      	            0 & a_{11} & \cdots & a_{1(n-3)} \\
      	            \vdots & \vdots & \ddots & \vdots \\
      	            0 & a_{(n-3)1} & \cdots & a_{(n-3)(n-3)} \\
      	            0 & 0 & \cdots & 0
      	         \end{bmatrix}
      	      = \begin{bmatrix} 0 & A \\ 0 & 0\end{bmatrix},
      	   \]
      	   with $A = (a_{ij}) \in \GL_{n-3}(\R)$. Once again, we can assume, without loss of generality, that $\G$ has one basis 
      	   $(X_1, X_2, \dots, X_{n-2}, Y, Z)$ such that $\G^1 = \s \{X_1, X_2, \ldots, X_{n-2}\} \cong \R^{n-2}$ and $[Z, Y] = X_{n-2}$. 
      	   Here, the Lie structure of $\G$ is also completely determined by $\bar{A}$.
      	\end{enumerate}
      	To summarize, case \ref{caseB-thm2} shows that $\G$ admits a basis $(X_1, X_2, \dots, X_{n-2}, Y, Z)$ such that 
      	$\G^1 = \s \{X_1, X_2, \ldots, X_{n-2}\} \cong \R^{n-2},\,[Z, Y] = X_{n-2}$ and
      	\[
      	   a_Z = \bar{A} \in \Bigg\{\begin{bmatrix} A & 0 \\ 0 & 0\end{bmatrix}, \begin{bmatrix} 0 & A \\ 0 & 0\end{bmatrix}\Bigg\} \quad
      	   \text{where $A \in \GL_{n-3}(\R)$}.
      	\]
      	We emphasize that, the Lie structure of $\G$ is completely determined by $\bar{A}$. Therefore, $\bar{A}$ is called 
      	the \emph{structure matrix} of $\G$ and we denote $\G$ by $\G_{\bar{A}}$.
      	
      	Now, we consider another $\G_{\bar{B}}$ with the structure matrix
      	\[
      	   a_Z = \bar{B} \in \Bigg\{\begin{bmatrix} B & 0 \\ 0 & 0\end{bmatrix}, \begin{bmatrix} 0 & B \\ 0 & 0\end{bmatrix}\Bigg\} \quad
      	   \text{for some $B \in \GL_{n-3}(\R)$}.
      	\]
      	It means that
      	\begin{itemize}
      	   \item[$\bullet$] $\G_{\bar{B}} = \s \{X_1, X_2, \ldots, X_{n-2}, Y, Z\}$ whose $\G^1_{\bar{B}} = \s \{X_1, X_2, \ldots, X_{n-2}\} \cong \R^{n-2}$.
      	   \item[$\bullet$] $[Z, Y] = X_{n-2}$ and $a_Z = \bar{B}$.
      	\end{itemize}
      	We will prove that $\G_{\bar{A}} \cong \G_{\bar{B}}$ if and only if $\bar{A} \sim_{p} \bar{B}$.
      	
      	{\bf Proof of ($\Longrightarrow$)}
      	
      	Suppose that $\G_{\bar{A}} \cong \G_{\bar{B}}$ and $f: \G_{\bar{A}} \to \G_{\bar{B}}$ is a Lie isomorphism. Let $M_f \in\GL_n(\R)$ 
      	is the matrix of $f$ with respect to the basis $(X_1, X_2, \ldots, X_{n-2}, Y, Z)$. Because $\G^1 = \s (X_1, X_2, \ldots, X_{n-2})$ is 
      	invariant under $f$, the matrix $M_f$ must be given as follows
      	\[
      	   M_f =
      	      \begin{bmatrix}
      	         c_{11} & \cdots & c_{1(n-2)} & c_{1(n-1)} & c_{1n}\\
      	         \vdots & \ddots & \vdots & \vdots & \vdots \\
      	         c_{(n-2)1} & \cdots & c_{(n-2)(n - 2)} & c_{(n-2)(n-1)} & c_{(n-2)n} \\
      	         0 & \cdots & 0 & y_1 & y_2 \\
      	         0 & \cdots & 0 & z_1 & z_2 
      	      \end{bmatrix}
      	   = \begin{bmatrix} C & \ast \\ 0 & D \end{bmatrix},
      	\]
      	where 
      	\[
      	   C = \begin{bmatrix} c_{11} & \cdots & c_{1(n-2)} \\ \vdots & \ddots & \vdots \\ c_{(n-2)1} & \cdots & c_{(n-2)(n - 2)} \end{bmatrix} 
      	   \in \M_{n-2}(\R), \quad
      	   D = \begin{bmatrix} y_1 & y_2 \\ z_1 & z_2 \end{bmatrix} \in \M_{2}(\R),
      	\]
      	and the asterisk denotes the $(n-2) \times 2$ matrix which is at the right upper corner of $M_f$. Since $f$ is an isomorphism, 
      	$\det M_f = \det C \cdot \det D \neq 0$. Therefore $C \in \GL_{n-2}(\R)$ and $D \in \GL_{2}(\R)$. In this proof, we will denote 
      	the Lie brackets of $\G_{\bar{A}}$ and $\G_{\bar{B}}$ by $[\cdot, \cdot]_{\bar{A}}$ and $[\cdot, \cdot]_{\bar{B}}$, respectively.
      	
      	Note that, by fixing basis $(X_1, X_2, \ldots, X_{n-2})$ in $\G^1$, for the sake of convenience, we can identify $\G^1$ with $\R^{n-2}$ 
      	in the following sense.
      	
      	\begin{itemize}
      	   \item[$\bullet$] The basis $(X_1, X_2, \ldots, X_{n-2})$ in $\G^1$ is identified with the canonical one of $\R^{n-2}$, i.e.
      	   \[
      	      X_1 \equiv e_1 := \begin{bmatrix} 1 \\ 0 \\ \vdots \\ 0 \end{bmatrix}, 
      	      X_2 \equiv e_2 := \begin{bmatrix} 0 \\ 1 \\ \vdots \\ 0 \end{bmatrix}, \ldots, 
      	      X_{n-2} \equiv e_{n-2} := \begin{bmatrix} 0 \\ 0 \\ \vdots \\ 1 \end{bmatrix} \in \R^{n-2}.
      	   \]
      	   \item[$\bullet$] For $X_j \,(j = 1, 2, \dots, n-2)$ and every vector $v = x_1X_1 + x_2X_2 + \cdots + x_{n-2}X_{n-2} \in \G^1$ 
      	   we have the following identities
      	   \[
      	      [Z, X_j]_{\bar{A}} \equiv \bar{A}e_j, f(X_j) \equiv Ce_j, v \equiv \begin{bmatrix} x_1 \\ x_2 \\ \vdots \\ x_{n-2} \end{bmatrix}, 
      	      [Z, v]_{\bar{B}} \equiv \bar{B}\begin{bmatrix} x_1 \\ x_2 \\ \vdots \\ x_{n-2} \end{bmatrix}, 
      	      f(v) \equiv C \begin{bmatrix} x_1 \\ x_2 \\ \vdots \\ x_{n-2} \end{bmatrix}.
      	   \]
      	\end{itemize}
      	
      	Note that $a_Y = 0 \neq a_Z$, in particular, $\bar{B}C \neq 0$. It means that there exists at least one $j \in \{1, 2, \ldots, n-2 \}$ 
      	such that ${\bar{B}}Ce_j \neq 0$. Therefore, we have
      	\[
      	   \begin{array}{l l}
      	      &  0 = f(0) = f\left([Y, X_j]_{\bar{A}}\right) = [f(Y), f(X_j)]_{\bar{B}}; \quad j = 1, 2, \dots, n-2 \\
      	      \Leftrightarrow & 0 = \left[\sum \limits_{k=1}^{n-2} {c_{k,n-1}X_k + y_1Y + z_1Z}, Ce_j \right]_{\bar{B}}; \quad j = 1, 2, \dots, n-2 \\
      	      \Leftrightarrow & 0 = z_1{\bar{B}}Ce_j; \quad j = 1, 2, \dots, n-2 \\
      	      \Leftrightarrow & 0 = z_1.
      	   \end{array}
      	\]
      	In particular, $0 \neq \det D = y_1z_2 - z_1y_2 = y_1z_2$, i.e. $z_2 \neq 0$. By setting $c = \frac{1}{z_2}$ we get $c \neq 0$. 
      	On the other hand, because $(X_1, X_2, \ldots, X_{n-2})$ is a basis of $\G^1$, we have
      	\[
      	   \begin{array}{l l}
      	      & f([Z, X_j]_{\bar{A}}) = [f(Z), f(X_j)]_{\bar{B}}; \quad j = 1, 2, \dots, n-2 \\
      	      \Leftrightarrow & f(\bar{A}e_j)= \left[\sum \limits_{k=1}^{n-2} {c_{kn}X_k + y_2Y + z_2Z}, \, Ce_j \right]_{\bar{B}}; \quad j = 1, 2, \dots, n-2 \\
      	      \Leftrightarrow & C\bar{A}e_j   = [z_2Z, Ce_j]_{\bar{B}}; \quad j = 1, 2, \dots, n-2 \\
      	      \Leftrightarrow  & C\bar{A}e_j   = z_2 \bar{B}Ce_j; \quad j = 1, 2, \dots, n-2 \\
      	      \Leftrightarrow & cC\bar{A}e_j = \bar{B}Ce_j; \quad j = 1, 2, \dots, n-2 \\
      	      \Leftrightarrow & cC\bar{A} = \bar{B}C \\
      	      \Leftrightarrow & c\bar{A} = C^{-1}\bar{B}C \\
      	      \Leftrightarrow & \bar{A} \sim_{p} \bar{B}.
      	   \end{array}
      	\]
      	
      	{\bf Proof of $(\Longleftarrow)$}
      	
      	Suppose that $\bar{A} \sim_{p} \bar{B}$, i.e. there exists a non-zero real number $c$ and an invertible $(n-2)$-squared matrix $C$ 
      	such that $c\bar{A} = C^{-1}\bar{B}C$. We consider the map $f: \G_{\bar{A}} \longrightarrow \G_{\bar{B}}$ which is defined in 
      	the basis $(X_1, X_2, \ldots, X_{n-2}, Y, Z)$ by the following $n$-squared matrix 
      	\[
      	   M_f = \begin{bmatrix} C & 0 & 0 \\ 0& c & 0 \\ 0 & 0 & \frac{1}{c} \end{bmatrix}.
      	\]
      	It is easy to check that $f$ is a Lie isomorphism. Therefore $\G_{\bar{A}} \cong \G_{\bar{B}}$. 
      	The proof of Theorem \ref{thm2} is completed.
	   \end{enumerate}
	   
	\section{Comments, remarks and illustrations}\label{sec:5}
	
	   In this section, we will give some post-hoc analyses of Theorems \ref{thm1} and \ref{thm2} based on comparisons with previous 
	   classifications of some subclasses of solvable Lie algebras.

	   \subsection{\bf {\Li} in comparisons with Sch\"obel \cite{Sch93} and Jannisse \cite{Jan10}}
	      
	      First of all, we give in this subsection a correspondence between our classification and the works of Sch\"obel \cite{Sch93} 
	      and Jannisse \cite{Jan10} in same problem and note the missing in their works.
	      
	      \begin{enumerate}
	         \item In 1993 Sch\"obel \cite{Sch93} classified $\mathrm{Lie} \left(n, k\right)$ with $k \in \{1, 2, 3\}$. More concretely, 
	         he gave a partial classification of {\Li} when $\dim \left(L^{(1)} \cap C(L)\right) \leqslant 1$, where $L^{(1)}$ (resp. $C(L)$) 
	         is the derived algebra (resp. the center) of the considered Lie algebra $L$, and this result corresponds to this paper. 
	         However, its detailed proof was not clear enough because it cited to a preprint article which cannot be found in J. Math. Phys. 
	         Furthermore, there is a missing in this classification as follows.
	         
	         He began with a without-proof Lemma in \cite[p.\ 177]{Sch93} asserted that $L$ has a 4-dimensional subalgebra $S$ 
	         with $\dim S^{(1)} = 2$. Then nine Lie algebras $S$ denoted by $a1$, $a2$, $a3$, $a4 \, (p \neq 0)$, $b5$, $b6$, $b7$, $c8$ 
	         and $c9$ were specified. Among them, $a1$, $a2$, $a3$, $a4$ and $b5$ are decomposable. Moreover, $c8$ and $c9$ 
	         have $\dim \left( S^{(1)} \cap C(S)\right) = 1$, otherwise $S^{(1)} \cap C(S) = \{0\}$. Now the partial classification of {\Li} in 
	         Sch\"obel \cite[Theorem 2]{Sch93} contains two subcases as follows.
	         
	         \begin{enumerate}
	            \item If $L^{(1)} \cap C(L) = \{0\}$ then $L = \R^{n-4} \oplus S$, $C(L) = \R^{n-4} \oplus C(S)$, where $S$ is a 4-dimensional 
	            real Lie algebra with 2-dimensional derived algebra and $S^{(1)} \cap C(S) = \{0\}$. Therefore, the desired classification 
	            amounts to the above classification of $S$ which $S^{(1)} \cap C(S) = \{0\}$. It follows that we have the Lie algebras 
	            $\R^{n-4} \oplus ai \, (i = 1, 2, 3, 4)$ or $\R^{n-4} \oplus bj \, (j = 1, 2, 3)$.
	            
	            \item If $\dim \left(L^{(1)} \cap C(L)\right) = 1$ then $L = I_{n-1} \oplus_{\ad_Z} L_1$ is the semi-direct sum of an 
	            $(n-1)$-dimensional ideal $I_{n-1} = \s \{X_1, \ldots, X_{n-1}\}$, i.e. $I_{n-1}$ belongs to $\mathrm{Lie} \left(n-1, 1\right)$, 
	            and an 1-dimensional subalgebra $L_1 = \s \{Z\}$. Thus the classification of {\Li} in this subcase is reduced to the classification 
	            of $\mathrm{Lie} \left(n-1, 1\right)$ which had been known, and the adjoint operator $\ad_Z$ has one of four forms: 
	            $\varphi_1 (Z)$, $\varphi_2 (Z)$, $\varphi_3 (Z)$, and $\varphi_4 (Z)$.
	            
	            From usual abbreviations, $\varepsilon_i$ should be 0 or 1 even though there is no condition here, and thus $\varphi_1(Z)$ 
	            is a special case of $\varphi_2(Z)$. It can verify that we will have in this subcase the following Lie algebras
	            \begin{itemize}
	               \item $\R^{n-4} \oplus ci \, (i=8,9)$ or $\R^{n-2k-5} \oplus \G_{5+2k,2}$ or $\R^{n-2k-6} \oplus \G_{6+2k,2.1}$ if $\ad_Z = \varphi_2 (Z)$.
	               \item $\R^{n-2m-3} \oplus \af (\R) \oplus \h_{2m+1}$ if $\ad_Z = \varphi_3 (Z)$.
	               \item $\R^{n-2k-6} \oplus \G_{6+2k,2.2}$ if $\ad_Z = \varphi_4 (Z)$.
	            \end{itemize}
	         \end{enumerate}

	         To summarize, these results are shown in Table \ref{tab2} below.
	         \begin{table}[!h]
	            \caption{Correspondence between our classification and Sch\"obel \cite[Theorem 2]{Sch93}}\label{tab2}
	            \begin{tabular}{|l||c|c|c|c|c|c|c|c|c|c|c|c|}
	               \hline \multirow{2}{*}{\backslashbox{Our types}{Sch\"obel \cite{Jan10}}} & \multirow{2}{*}{$a1$} & \multirow{2}{*}{$a2$} & 
	               \multirow{2}{*}{$a3$} & $a4,$ & $a4,$ & $a4,$ & \multirow{2}{*}{$b5$} & \multirow{2}{*}{$b6$} & \multirow{2}{*}{$b7$} & 
	               \multirow{2}{*}{$\varphi_2$} & \multirow{2}{*}{$\varphi_3$} & \multirow{2}{*}{$\varphi_4$} \\
	               & & & & $p < -\frac{1}{4}$ & $p = -\frac{1}{4}$ & $p > -\frac{1}{4}$ & & & & & & \\ \hline
	               \hline $\R^{n-3} \oplus \G_{3,2.1(-1)}$ & \checkmark & & & & & & & & & & & \\
	               \hline $\R^{n-3} \oplus \G_{3,2.1(1)}$ & & & \checkmark & & & & & & & & & \\
	               \hline $\R^{n-3} \oplus \G_{3,2.1(\lambda \neq \pm 1)}$ & & & & & & \checkmark & & & & & & \\
	               \hline $\R^{n-3} \oplus \G_{3,2.2}$ & & & & & \checkmark & & & & & & & \\ 
	               \hline $\R^{n-3} \oplus \G_{3,2.3\left(\frac{\pi}{2}\right)}$ & & \checkmark & & & & & & & & & & \\
	               \hline $\R^{n-3} \oplus \G_{3,2.3\left(\varphi \neq \frac{\pi}{2}\right)}$ & & & & \checkmark & & & & & & & & \\
	               \hline $\R^{n-4} \oplus \G_{4,2.1}$ & & & & & & & & & & \checkmark & & \\
	               \hline $\R^{n-4} \oplus \G_{4,2.2}$ & & & & & & & & & & \checkmark & & \\
	               \hline $\R^{n-4} \oplus \G_{4,2.3(0)}$ & & & & & & & & & \checkmark & & & \\
	               \hline $\R^{n-4} \oplus \G_{4,2.4}$ & & & & & & & & \checkmark & & & & \\
	               \hline $\R^{n-2k-5} \oplus \G_{5+2k,2}$ & & & & & & & & & & \checkmark & & \\
	               \hline $\R^{n-2k-6} \oplus \G_{6+2k,2.1}$ & & & & & & & & & & \checkmark & & \\
	               \hline $\R^{n-2k-6} \oplus \G_{6+2k,2.2}$ & & & & & & & & & & & & \checkmark \\
	               \hline $\R^{n-4} \oplus \af(\R) \oplus \af(\R)$ & & & & & & & \checkmark & & & & & \\
	               \hline $\R^{n-2m-3} \oplus \af(\R) \oplus \h_{2m+1}$ & & & & & & & & & & & \checkmark & \\
	               \hline 
	            \end{tabular}
	         \end{table}
	         
	         It is clear that the family $\G_{4,2.3(\lambda \neq 0)}$ is missed here in dimension 4 and, of course, so is 
	         $\R^{n-4} \oplus \G_{4,2.3(\lambda \neq 0)}$.
	      
	         \item In 2010, Janisse \cite{Jan10} also classified all finite-dimensional Lie algebras $L$ over a field $\F$ whose 
	         derived algebra $L'$ has dimension 1 or 2. More precisely, a complete classification of $L$ when $\dim L' = 1$ and 
	         $\F$ is of characteristic zero in \cite[Proposition 4.1]{Jan10} coincides with the classification of $\mathrm{Lie} \left(n, 1\right)$ 
	         of Sch\"obel \cite[Theorem 1]{Sch93}. When $\dim L' = 2$, $L' = \s\{e_1, e_2\}$ is non-central and $\F$ is \emph{algebraically closed} 
	         \cite[Subsection 7.2]{Jan10}, he denotes by $A_i = \begin{bmatrix} a^1_{1i} & a^2_{2i} \\ a^1_{2i} & a^2_{2i} \end{bmatrix} \,
	         (3 \leqslant i \leqslant n)$, where $a_{ij}^k$ are the structure constants of $L$. After that, he divides into the following two cases.
	         
	         \begin{enumerate}
	            \item There exists an invertible linear combination of $A_i$. By \cite[Proposition 7.1]{Jan10} all outside basic vectors of $L'$ 
	            commute, and Lie structure of $L$ thus defined by all $A_i$ for $i \geqslant 3$. According to \cite[Theorem 7.1]{Jan10} there 
	            are non-zero $A_i$ as in the following four subcases:
	            \begin{enumerate}
	       	       \item $A_3 = \begin{bmatrix} 1 & 0 \\ 0 & a \end{bmatrix}$ with $a \neq 0$, and $A_i = 0$ for all $i > 3$.
	       	       \item $A_3 = \begin{bmatrix} 1 & 0 \\ 0 & 0 \end{bmatrix}$, $A_4 = \begin{bmatrix} 0 & 0 \\ 0 & 1 \end{bmatrix}$, 
	       	       and $A_i = 0$ for all $i > 4$.
	       	       \item $A_3 = \begin{bmatrix} 1 & 1 \\ 0 & 1 \end{bmatrix}$, $A_4 = \begin{bmatrix} 0 & 1 \\ 0 & 0 \end{bmatrix}$, 
	       	       and $A_i = 0$ for all $i > 4$.
	       	       \item $A_3 = \begin{bmatrix} 1 & 1 \\ 0 & 1 \end{bmatrix}$, and $A_i = 0$ for all $i > 3$.
	            \end{enumerate}
	            \item In case of $\dim [L, L'] = 1$, it follows from \cite[Theorem 7.3]{Jan10} that $L = \s \{e_1, e_2, \ldots, e_n\}$ and 
	            one of two following cases occurs:
	            \begin{enumerate}
	               \item $A_3 = \begin{bmatrix} 1 & 0 \\ 0 & 0 \end{bmatrix}$, all other $A_i = 0$, $[e_3, e_4] = e_2$, and all other Lie brackets 
	               are trivial or there exists $i \geqslant 5$ such that $[e_4, e_5] = [e_5, e_6] = \cdots = [e_{i-1}, e_i] = e_2$.
	               \item $A_3 = \begin{bmatrix} 0 & 1 \\ 0 & 0 \end{bmatrix}$, all other $A_i = 0$, $[e_3, e_4] = e_1$, and all other Lie brackets 
	               are trivial or there exists $i \geqslant 5$ such that $[e_4, e_5] = [e_5, e_6] = \cdots = [e_{i-1}, e_i] = e_2$.
	            \end{enumerate}
	         \end{enumerate}
	         
	         Now the following Table \ref{tab3} is a correspondence between our classification and Jannise's one over $\R$ from which, 
	         a partial classification of {\Li} follows. 
	      \begin{table}[!h]
	         \caption{Correspondence between our classification and Jannise \cite{Jan10} over $\R$}\label{tab3} 
	         \begin{tabular}{|l||c|c|c|c|c|c|c|c|}
	            \hline \backslashbox{Our types}{Jannise \cite{Jan10}} & ai & aii & aiii & aiv & bi & bii & biii & biv \\ \hline
	            \hline $\R^{n-3} \oplus \G_{3,2.1(\lambda)}$ & \checkmark & & & & & & & \\
	            \hline $\R^{n-3} \oplus \G_{3,2.2}$ & & & & \checkmark & & & & \\ 
	            \hline $\R^{n-4} \oplus \G_{4,2.1}$ & & & & & \checkmark & & & \\
	            \hline $\R^{n-4} \oplus \G_{4,2.2}$ & & & & & & & \checkmark & \\
	            \hline $\R^{n-4} \oplus \G_{4,2.3(0)}$ & & & \checkmark & & & & & \\
	            \hline $\R^{n-2k-5} \oplus \G_{5+2k,2}$ & & & & & & & & \checkmark \\
	            \hline $\R^{n-2k-6} \oplus \G_{6+2k,2.1}$ & & & & & & & & \checkmark \\
	            \hline $\R^{n-2k-6} \oplus \G_{6+2k,2.2}$ & & & & & & \checkmark & & \\
	            \hline $\R^{n-4} \oplus \af(\R) \oplus \af(\R)$ & & \checkmark & & & & & & \\
	            \hline $\R^{n-2m-3} \oplus \af(\R) \oplus \h_{2m+1}$ & & & & & & \checkmark & & \\
	            \hline 
	         \end{tabular}
	      \end{table}
	      
	      It is easy to see that the families $\R^{n-3} \oplus \G_{3,2.3(\varphi)}$ and $\R^{n-4} \oplus \G_{4,2.3(\lambda \neq 0)}$ do not appear.
	   \end{enumerate}

	   \subsection{\bf Illustrations of {\Li} in low dimensions}
	   
	      In this subsection, we will consider intersections of {\Li} with previous classifications of real indecomposable solvable Lie algebras 
	      in low dimension (not greater than 7). More precisely, we use the following classifications:
	      \begin{itemize}
	         \item[$\bullet$] Dimension 3 and 4: Mubarakzyanov \cite{Mub63a} in 1963 (cf. also Patera et al. \cite[Table I]{PSWZ76}).
	         \item[$\bullet$] Dimension 5: Nilpotent Lie algebras of Dixmier \cite{Dix58} in 1958 and solvable ones of Mubarakzyanov \cite{Mub63b} 
	         in 1963 (cf. Patera et al. \cite[Table II]{PSWZ76} for a more accessible version).
	         \item[$\bullet$] Dimension 6: Nilpotent Lie algebras of Morozov \cite{Mor58} in 1958 (cf. also Patera et al. \cite[Table III]{PSWZ76}) 
	         and solvable ones of Mubarakzyanov \cite{Mub63c} in 1963 (cf. Shabanskaya \& Thompson \cite{ST13} for a revised version) and 
	         Tukowski \cite{Tuk90} in 1990.
	         \item[$\bullet$] Dimension 7: Nilpotent Lie algebras of Gong \cite{Gon98} in 1998 and solvable ones of Parry \cite{Par07} in 2007 
	         and Hindeleh \& Thompson \cite{HT08} in 2008.
	      \end{itemize}

	      Now the intersection of {\Li} with the above classifications is given in Table \ref{tab4} below. 
	      \begin{table}[!h]
	         \caption{Intersection between {\Li} and the other classifications in low dimension}\label{tab4}
	         \begin{tabular}{|c|l|c|cc|ccc|ccc|c|}
	            \hline \rotatebox[origin=c]{90}{Dimensions} & \rotatebox[origin=c]{90}{Our types} & 
	            \rotatebox[origin=c]{90}{Mubarakzyanov \cite{Mub63a}} & \rotatebox[origin=c]{90}{Dixmier \cite{Dix58}} & 
	            \rotatebox[origin=c]{90}{Mubarakzyanov \cite{Mub63b}} & \rotatebox[origin=c]{90}{Morozov \cite{Mor58}} & 
	            \rotatebox[origin=c]{90}{Mubarakzyanov \cite{Mub63c}} & \rotatebox[origin=c]{90}{Tukowski \cite{Tuk90}} & 
	            \rotatebox[origin=c]{90}{Gong \cite{Gon98}} & \rotatebox[origin=c]{90}{Parry \cite{Par07}} & 
	            \rotatebox[origin=c]{90}{Hindeleh \& Thompson \cite{HT08}} &  \rotatebox[origin=c]{90}{Patera et al. \cite{PSWZ76}} \\ 
	            \hline \multirow{3}{*}{3} & $\G_{3,2.1(\lambda)}$ & $g_{3,3}$, $g_{3,4}$ & & & & & & & & & $A_{3,3}$, $A_{3,4}$, $A^a_{3,5}$ \\
	            \cline{2-12}  & $\G_{3,2.2}$ & $g_{3,2}$ & & & & & & & & & $A_{3,2}$ \\
	            \cline{2-12}  & $\G_{3,2.3}$ & $g_{3,5}$ & & & & & & & & & $A_{3,6}$, $A^a_{3,7}$ \\
	            \hline \multirow{4}{*}{4} & $\G_{4,2.1}$ & $g_{4,1}$ & & & & & & & & & $A_{4,1}$ \\
	            \cline{2-12}  & $\G_{4,2.2}$ & $g_{4,3}$ & & & & & & & & & $A_{4,3}$ \\
	            \cline{2-12}  & $\G_{4,2.3(0)}$ & $g_{4,9}^{p=0}$ & & & & & & & & & $A^0_{4,9}$ \\
	            \cline{2-12}  & $\G_{4,2.4}$ & $g_{4,10}$ & & & & & & & & & $A_{4,12}$ \\
	            \hline 5 & $\G_{5,2}$ & & $\g_{5,3}$ & & $L_{5,5}$ & & & & & &  $A_{5,5}$ \\
	            \hline \multirow{2}{*}{6} & $\G_{6,2.1}$ & & & & 2B.2c.12 & & & & & & $A_{6,12}$ \\
	            \cline{2-12}  & $\G_{6,2.2}$ & & & & & $g_{6,14}^{a=b=0}$ & & & & & \\
	            \hline 7 & $\G_{7,2}$ & & & & & & & (157) & & & \\
	            \hline
	         \end{tabular}
	      \end{table}
	      
	      We conclude this subsection by pointing out some misprints as well as shortcomings in previous works and correcting them.
	      \begin{enumerate}
	         \item In 1963 Mubarakzyanov \cite{Mub63a} classified 4-dimensional real solvable Lie algebras. It is easy to see from 
	         Table \ref{tab4} that the family $\G_{4,2.3(\lambda \neq 0)}$ is missed.
	         
	         \item In 1963 Mubarakzyanov \cite{Mub63b} also classified 5-dimensional real solvable Lie algebras. 
	         For a more accessible version we use Patera et al.\footnote{There are two misprints: for the Lie algebra $A_{5,8}^c$ the last 
	         Lie bracket $[e_4, e_5] = ce_4 \, (0 \neq c \leqslant b)$ should be removed; and the last Lie algebra $A_{5,40}$ in 
	         \cite[Table II]{PSWZ76} is non-solvable.} \cite[Table II]{PSWZ76}. Below we revise some errors.
	         \begin{itemize}
	            \item For Lie algebra $A^{bc}_{5,9}$ the condition $0 \neq c \leqslant b$ should be replaced by $bc \neq 0$. 
	            In fact, if $b = 0 \neq c$ then $A^{0c}_{5,9} \cong \R. e_3 \oplus \s \{e_1, e_2, e_4, e_5\}$ is decomposable.
	            
	            \item For the family $A^{ab}_{5,33}$ the condition should be $ab \neq 0$ instead of $a^2 + b^2 \neq 0$. In fact, if $a = 0 \neq b$ 
	            (resp. $a \neq 0 = b$) then $A^{0b}_{5,33} \cong \G_{3,2.1(b)} \oplus \af(\R)$ (resp. $A^{a0}_{5,33} \cong \af(\R) \oplus \G_{3,2.1(a)}$) 
	            is decomposable.
	         \end{itemize}
	         
	         \item In 1958 Morozov \cite{Mor58} classified 6-dimensional nilpotent Lie algebras over a field of characteristic zero 
	         (cf. also Patera et al. \cite[Table III]{PSWZ76} for a more accessible version). There exists an error here. The sixth family in 
	         \cite[Section 2, p.\ 168]{Mor58} (it is exactly $A^a_{6,5}$ in \cite[Table III]{PSWZ76}) has the Lie structure as follows
	         \[
	            \begin{array}{l l l l l l l l l}
	               L_6^\gamma: & &[e_1, e_3] = e_5, & [e_1, e_4] = e_6, & [e_2, e_3] = \gamma e_6, & [e_2, e_4] = e_5, && (\gamma \neq 0).
	            \end{array}
	         \]
	         We emphasize that it should be $[e_2, e_3] = -e_6$ instead of $[e_2, e_3] = \gamma e_6 \, (\gamma \neq 0)$. In fact, if 
	         $\gamma > 0$ then by transformation
	         \[
	            T =
	               \begin{bmatrix}
	                  1 & 0 & 0 & 1 & 0 & 0 \\
	                  \frac{1}{\sqrt{\gamma}} & 0 & 0 & -\frac{1}{\sqrt{\gamma}} & 0 & 0 \\
	                  0 & 1 & 0 & 0 & 1 & 0 \\
	                  0 & \sqrt{\gamma} & 0 & 0 & -\sqrt{\gamma} & 0 \\
	                  0 & 0 & 2 & 0 & 0 & 2 \\
	                  0 & 0 & 2\sqrt{\gamma} & 0 & 0 & -2\sqrt{\gamma}
	               \end{bmatrix},
	         \]
	         it reduces to $[e_1, e_2] = e_3$, $[e_4, e_5] = e_6$, and thus $L_6^{\gamma > 0} \cong \h_3 \oplus \h_3$ is decomposable. 
	         Similarly, if $\gamma < 0$ we can normalize it to $-1$ by transformation 
	         $T = \di \left(\sqrt{-\gamma}, -1, -1, \sqrt{-\gamma}, -\sqrt{-\gamma}, -\gamma\right)$, and thus $L_6^{\gamma < 0} \cong L_6^{-1}$. 
	         Moreover, by considering the operators $\ad_{e_1}$ and $\ad_{e_2}$ on the 4-dimensional maximal abelian ideal $I = \s \{e_3, e_4, e_5, e_6\}$, it follows $L_6^{\gamma > 0} \not \cong L_6^{-1}$. Therefore, we can remove absolutely the parameter $\gamma$ to get one and only one Lie algebra $L_6 = L_6^{-1}$.
	         \end{enumerate}

	   \subsection{\bf Illustrations of $\mathrm{Lie} \left(n, (n-2)C\right)$ in low dimensions}
	   
	   In this subsection, we give illustrations of Theorem \ref{thm2} in low dimensions. Namely, we give a concrete list of Lie algebras 
	   belong to $\mathrm{Lie} \left(n, (n-2)C\right)$ in dimension 4 and 5. Let us note that, here $\G$ is an $n$-dimensional solvable 
	   indecomposable Lie algebra with $\G^1 \cong \R^{n-2}$ and $\dim \A_\G = 1$.
	   \begin{enumerate}
	      \item {\bf The case $n = 4$.} In this case, $\G = \s \{X_1, X_2, Y, Z\}$ with $\G^1 = \s \{X_1, X_2\} \cong \R^2$, 
	      $[Y, Z] = X_2$, $a_Y = 0$ and
	      \[
	         a_Z = \begin{bmatrix} a & 0 \\ 0 & 0 \end{bmatrix} \, \sim_{p} \, \begin{bmatrix} 1 & 0 \\ 0 & 0 \end{bmatrix} \quad 
	         \text{or} \quad a_Z = \begin{bmatrix} 0 & a \\ 0 & 0 \end{bmatrix} \, \sim_{p} \, \begin{bmatrix} 0 & 1 \\ 0 & 0 \end{bmatrix}, \quad 
	         \text{for all $a \neq 0$}.
	      \]
	      Therefore, in this case we have two 4-dimensional Lie algebras having 2-codimensional derived algebra. Note that if $n = 4$ 
	      then $n-2 = 2$ and it is obvious that we obtain, once again, the same result as the one of {\Li} with $n = 4$, $\dim \A_\G = 1$ 
	      which is shown in Part \ref{part1-thm1} of Theorem \ref{thm1}, that are Lie algebras $\G_{4,2.1}$ and $\G_{4,2.2}$.
	      
	      \item {\bf The case $n = 5$.} In this case $\G = \s \{X_1, X_2, X_3, Y, Z\}$ with $\G^1 = \s \{X_1, X_2, X_3\} \cong \R^3$, 
	      $[Y, Z] = X_3$, $a_Y = 0$ and
	      \[
	         a_Z = \begin{bmatrix} A & 0 \\ 0 & 0 \end{bmatrix} \quad \text{or} \quad a_Z = \begin{bmatrix} 0 & A \\ 0 & 0 \end{bmatrix}, \quad 
	         \text{where $A \in \GL_2(\R)$}.
	      \]
	      \begin{itemize}
	         \item[$\bullet$] Assume that $a_Z = \begin{bmatrix} A & 0 \\ 0 & 0 \end{bmatrix}$. Then classification of $a_Z$ is reduced to 
	         classification of $A$ by proportional similar relation. By the proof of Lemma \ref{lem2}, we have three families of proportional 
	         similar classes of $A$ as follows
	         \[
	            \begin{bmatrix} 1 & 0 \\ 0 & \lambda \end{bmatrix} (\lambda \in \R \setminus \{0\}); \quad
	            \begin{bmatrix} 1 & 1 \\ 0 & 1 \end{bmatrix}; \quad
	            \begin{bmatrix} \cos \varphi &-\sin \varphi \\ \sin \varphi & \cos \varphi \end{bmatrix} (\varphi \in (0, \pi)).
	         \]
	         Thus, in this case, we have three families of Lie algebras.
	         
	         \item[$\bullet$] Assume that $a_Z = \begin{bmatrix} 0 & A \\ 0 & 0 \end{bmatrix}$. Namely, we can always set 
	         $a_Z = \begin{bmatrix} 0 & a & c \\ 0 & b & d \\ 0 & 0 & 0 \end{bmatrix}$ with $a, b, c, d \in \R$ and $ad - bc \neq 0$. 
	         Then, $p(t) = t^2(b - t)$ is the characteristic polynomial of $a_Z$ and $a_Z$ has eigenvalues $0, 0, b$. This means that
	         \begin{itemize}
	            \item[$\bullet$] $a_Z \sim_{p} \begin{bmatrix} 0 & 1 & 0 \\ 0 & 0 & 1 \\ 0 & 0 & 0 \end{bmatrix}$ when $b = 0$.
	            \item[$\bullet$] $a_Z \sim_{p} \begin{bmatrix} 0 & 1 & 0 \\ 0 & 0 & 0 \\ 0 & 0 & 1 \end{bmatrix}$ when $b \neq 0$.
	         \end{itemize}
	         Therefore, we obtain, in this case,  two families of Lie algebras.	         
	      \end{itemize}

	      To summarize, we have five families of Lie algebras in $\mathrm{Lie} \left(5, 3C\right)$. These algebras, of course, coincide with 
	      the corresponding ones of Mubarakzyanov \cite{Mub63a}. We show them in Table \ref{tab5} below.
	      \begin{table}[!h]
	      \caption{5-dimensional Lie algebras having 3-dimensional commutative derived algebra and $\dim A_\G = 1$}\label{tab5} 
	      \begin{tabular}{c c c c}
	         \hline\noalign{\smallskip}
	            Types of $a_Z$ & Cases & Mubarakzyanov \cite{Mub63a} & Patera et al. \cite[TABLE II]{PSWZ76} \\
	         \noalign{\smallskip}\hline\noalign{\smallskip}
	           \multirow{8}{*}{$\begin{bmatrix} A & 0 \\ 0 & 0 \end{bmatrix}$} & $\begin{bmatrix} 1 & 0 & 0 \\ 0 & \lambda & 0 \\ 0 & 0 & 0 \end{bmatrix}$ 
	           & $g_{5,8}$ & $A_{5,8}^c$ \\
	         \noalign{\smallskip}\cline{2-4}\noalign{\smallskip}
	           & $\begin{bmatrix} 1 & 1 & 0 \\ 0 & 1 & 0 \\ 0 & 0 & 0 \end{bmatrix}$ & $g_{5,15}^{\gamma = 0}$ & $A_{5,15}^{a = 0}$ \\
	           \noalign{\smallskip}\cline{2-4}\noalign{\smallskip}
	           & $\begin{bmatrix} \cos \varphi &-\sin \varphi & 0 \\ \sin \varphi & \cos \varphi & 0 \\ 0 & 0 & 0 \end{bmatrix}$ & $g_{5,14}$ 
	           & $A_{5,14}^p$ \\
	         \noalign{\smallskip}\hline\noalign{\smallskip}
	            \multirow{4}{*}{$\begin{bmatrix} 0 & A \\ 0 & 0 \end{bmatrix}$} & $\begin{bmatrix} 0 & 1 & 0 \\ 0 & 0 & 1 \\ 0 & 0 & 0 \end{bmatrix}$ 
	            & $g_{5,2}$ & $A_{5,2}$ \\
	         \noalign{\smallskip}\cline{2-4}\noalign{\smallskip}
	            & $\begin{bmatrix} 0 & 1 & 0 \\ 0 & 0 & 0 \\ 0 & 0 & 1 \end{bmatrix}$ & $g_{5,10}$ & $A_{5,10}$ \\
	         \noalign{\smallskip}\hline
	      \end{tabular}
	   \end{table}
	   \end{enumerate}

	\section{Concluding remark}\label{sec:6}
	
	We conclude the paper with the following remarks.
	\begin{itemize}
	   \item[$\bullet$] {\bf For the classes $\mathrm{Lie} \left(n, k\right)$ with $k$ is small}: Proposition \ref{prop3} asserts that if an 
	   $n$-dimensional real solvable Lie algebra $\G$ belongs to {\Li} then $0 \leqslant \dim A_\G \leqslant 2$. Furthermore, the class 
	   of real 2-step nilpotent Lie algebras was investigated by Eberlein \cite{Ebe03}, and it is correspondent to the case $\dim A_\G = 0$. 
	   Therefore, together with the case $\dim A_\G \in \{1, 2\}$ in Theorem \ref{thm1}, we have a new complete classification of \Li.
	   
	   Combining the result of Sch\"obel \cite{Sch93} in 1993, the classes $\mathrm{Lie} \left(n, 1\right)$ and {\Li} are classified completely. 
	   So far, we have the complete classification of $\mathrm{Lie} \left(n, 1\right)$ and \Li. From here, we can begin to attack the open problem, 
	   namely the classifying problem for $\mathrm{Lie} \left(n, k\right)$ with $k > 2$.
	   
	   \item[$\bullet$] {\bf For the classes $\mathrm{Lie} \left(n, n-k\right)$ with $k$ is small}: The classifying problem for the classes 
	   $\mathrm{Lie} \left(n, n-k\right)$, in general, is more complicated than for $\mathrm{Lie} \left(n, k\right)$. Firstly, we will restrict ourselves 
	   to the simplest case, namely we consider the class $\mathrm{Lie} \left(n, (n-k)C\right)$ containing Lie algebras 
	   $\G \in \mathrm{Lie} \left(n, n-k\right)$ such that $\G^1 = [\G, \G]$ is commutative. 
	   
	   Recall that the class $\mathrm{Lie} \left(n, (n-1)C\right)$ has classified completely in 2016 by Vu A. Le et al. \cite{VHTHT16}. 
	   When $\G$ belongs to $\mathrm{Lie} \left(n, (n-2)C\right)$, $n \geqslant 4$, then Proposition \ref{prop4} also asserts that 
	   $\dim A_\G \in \lbrace 1, 2 \rbrace$. Theorem \ref{thm2} gives the (incomplete) classification of $\mathrm{Lie} \left(n, (n-2)C\right)$ 
	   which is restricted in the case $\dim A_\G = 1$.
	   
	   \item[$\bullet$] {\bf For the classes $\mathrm{MD}(n,k)$ and $\mathrm{MD}(n,n-k)$}: In fact, Vu A. Le et al. \cite{VHTHT16} have 
	   classified the so called $\mathrm{MD}(n,1) \equiv \mathrm{Lie} \left(n, 1\right)$ and $\mathrm{MD}(n,n-1)\equiv \mathrm{Lie} \left(n,(n-1)C\right)$. 
	   Recall that $\mathrm{MD}(n,k)$ is the subclass of $\mathrm{Lie} \left(n, k\right)$ containing Lie algebras of $\mathrm{Lie} \left(n, k\right)$ 
	   such that the coadjoint orbits of corresponding Lie group are zero-dimensional or maximal dimensional. Each algebra 
	   of $\mathrm{MD}(n,k)$ is called an \emph{$\mathrm{MD}(n,k)$-algebra}.
	   
	   We emphasize that the key of the most important method in theory of representations of Lie groups and Lie algebras, i.e. 
	   the Orbit Method of Kirillov, is the coadjoint orbits. Hence, the classifying problem for $\mathrm{MD}(n,k)$ is worth to study. 
	   
	   From the results of Theorems \ref{thm1} and \ref{thm2}, it is not hard to check that
	   \begin{itemize}
	      \item In the classification of {\Li}, the families $\G_{3,2.1(\lambda)} (\lambda \in \R \setminus \{0\})$, $\G_{3,2.2}$, 
	      $\G_{3,2.3(\varphi)} (\varphi \in (0,\pi))$, $\G_{4,2.1}$, $\G_{4,2.2}$, $\af (\C)$ and, of course, their extensions by the 
	      commutative real Lie algebras, are $\mathrm{MD}(n,2)$-algebras.
	      \item Furthermore, all Lie algebras listed in Theorem \ref{thm2} are also $\mathrm{MD}(n,n-2)$-algebras.
	   \end{itemize}
	   
	   For the classes $\mathrm{Lie} \left(n,k\right)$, $\mathrm{MD}(n,k)$ with $k>2$, $\mathrm{Lie} \left(n,n-k\right)$ with $k \geqslant 1$ 
	   and $\mathrm{MD}(n,n-k)$ with $k>1$, the classifying problem is still open up to now. In the forthcoming paper, we will consider the 
	   classes $\mathrm{Lie} \left(n,n-1\right)$ and {\li} as well as $\mathrm{MD}(n,n-2)$.
\end{itemize}

\begin{aknow}
	The authors would like to take this opportunity to thank the University of Economics and Law, Vietnam National University -- Ho Chi Minh City; the University of Physical Education and Sports, Ho Chi Minh City; Can Tho University; Dong Thap University and Hoa Sen University for financial supports.
\end{aknow}



\end{document}